\newtheorem{mainth1}{Theorem}[section]
\newtheorem{mainth}[mainth1]{Theorem}
\newtheorem{propcycle}{Property}[section]
\newtheorem{defreleq}[propcycle]{Definition}
\newtheorem{defreleqtop}[propcycle]{Definition}
\newtheorem{defrmap}[propcycle]{Definition}
\newtheorem{thenmap}{Theorem}[section]
\newtheorem{deflrmap}[thenmap]{Definition}
\newtheorem{thenmapnr}[thenmap]{Theorem}
\newtheorem{cortree}[thenmap]{Corollary}
\newtheorem{lemvalbr}[thenmap]{Lemma}
\newtheorem{thlbinv}[thenmap]{Theorem}
\newtheorem{thenpoly}[thenmap]{Theorem}
\newtheorem{cortreeg}[thenmap]{Corollary}
\begin{document}

\renewcommand{\proofname}{\textbf{Proof}}

\begin{center}
\textbf{\large{Topological enumeration of complex polynomial vector fields}}
\end{center}

\begin{center}
\textbf{J. Tomasini, \today}
\end{center}

\begin{flushleft}
\textbf{Abstract.} The enumeration of combinatorial classes of the complex polynomial vector fields in $\mathbb{C}$ presented in \cite{Dias2} is extended here to a closed form enumeration of combinatorial classes for degree $d$ polynomial vector fields up to rotations of $2(d-1)^{st}$ roots of unity. The main tool in the proof of this result is based on a general method of enumeration developed by V.A.Liskovets \cite{Lisk}.
\end{flushleft}

\section{Introduction}

In this work, our study is based on the equation
\begin{equation}
\label{poldiffeq}
\dot{z} := \frac{dz}{dt} = Q(z), \quad t\in \mathbb{R}, \quad z \in \mathbb{C},
\end{equation}
where $Q$ is a complex polynomial of degree $d \geq 2$. The set of trajectories of this equation partitions the complex plane following a particular pattern describing by a simple combinatorial model. The main goal of this article is to give a complete classification and enumeration of the topological structure of a vector field defined by a complex polynomial of degree $d$. \newline
In the following, $\gamma(t,z)$ (or simply $\gamma(t)$ if the initial condition does not matter) will denote the solution of equation (\ref{poldiffeq}) passing through the point $z$, and we will define for all complex polynomial $P$
\begin{displaymath}
\xi_P(z) = P(z) . \frac{d}{dz},
\end{displaymath}
the polynomial vector field defined by $P$. \newline
At first, as we are only interested in the topological structure of polynomial vector fields, we can consider that $\xi_P$ and $\xi_{-P}$ are topologically equivalent. In another way, it's easy to see that we can restrict our study to the set of monic, centered complex polynomial vector fields (i.e. the vector fields defined by a polynomial of the form $X^d + a_{d-2} X^{d-2} + \ldots + a_0$), in the sense that there exists an affine change of variable (uniquely determined by the multiplication of a $(d-1)^{st}$ root of unity) that changes the vector field defined by a complex polynomial $Q$ of degree $d$ to the vector field defined by a monic, centered complex polynomial $P$ of degree $d$. Moreover, this change of variable does not change the topological structure of $\xi_Q$, i.e. $\xi_Q$ and $\xi_P$ are topologically equivalent. From now we denote by $\mathcal{P}_d \simeq \mathbb{C}^{d-1}$ the set of monic, centered complex polynomial of degree $d$.

A recent study for the global classification of complex polynomial vector fields in $\mathbb{C}$, made first by A.Douady, F.Estrada and P.Sentenac \cite{DES} in the structurally stable (or generic) case, i.e. for polynomial vector fields such that there are neither homoclinic separatrices nor multiple equilibrium points, and then completed by B.Branner and K.Dias \cite{Dias1} for the general case, proved that a complex polynomial vector field in $\mathbb{C}$ is entirely determined by a combinatorial model (describing the topology of the vector field) and an analytic data set (describing the geometry of the vector field). More exactly, these two works show that for two given invariants (combinatorial and analytic), there exists a unique polynomial vector field defined by a monic, centered polynomial of fixed degree. In parallel to the work of A.Douady, F.Estrada and P.Sentenac, K.M.Pilgrim gives in his article \cite{Pilg} a method to construct explicitly a generic, monic, centered polynomial $P$ from any given combinatorial and a specific analytic data set.

We are often compelled in this article to consider the two situations, generic and non-generic, of polynomial vector field separately in order to simplify the presentation of the combinatorial models.

\vspace{2mm}

A natural question, appearing in the following of these works, is to count the different combinatorial models coming from polynomial vector fields of fixed degree, which is equivalent to classify our vector fields up to their topological structures. A part of this modeling and enumeration work is already done by A.Douady, F.Estrada and P.Sentenac in the generic case, and then enhanced by K.Dias in \cite{Dias2} for the non-generic case, but these two works do not exactly answer to the original problem in the sense that their modellisation (and so a fortiori their enumerations) show a distinction between two polynomial vector fields which are conformally conjugate by a rotation. This distinction is due to the fact that the modelisations proposed in \cite{DES} and \cite{Dias2} impose an "order" (symbolized by a marking in the combinatorial model) which is not specific to the vector fields.

The main contribution of this work is to find appropriate combinatorial models to enable the counting of the equivalent classes of polynomial vector fields without markings.

\vspace{2mm}

The contents of this paper are as follow. In Section 2, we introduce main notions and basic facts about polynomial vector fields and we define two equivalence relations whose equivalence classes are the ones we want to count. In Section 3, we present four types of combinatorial models describing the topological structure of a polynomial vector field. The first three are combinatorial objects that already exist in the article \cite{DES} for the generic case, and \cite{Dias2} for non-generic case, but are necessary to introduce the last combinatorial model that fits our problem. Finally, in Section 4, we first give a general method, developed by V.A.Liskovets (see for example \cite{Lisk}), in order to enumerate a set of unrooted planar maps. Thanks to this method, we will be able to prove the main results of this article:

\begin{mainth1}
\label{mainth1}
The number $\sigma_n$ of topological structures of generic complex polynomial vector fields of degree $d=n+1$ is
\begin{displaymath}
\sigma_n = \frac{1}{2n} \left[ \frac{1}{n+1} \left( \! \begin{array}{c} 2n \\ n \end{array} \! \right) + \sum_{l \geq 2 \atop l|n}{\varphi(l) \left( \! \begin{array}{c} 2n/l \\ n/l \end{array} \! \right)} + \left\{ \begin{array}{cc}
\left( \! \begin{array}{c} n \\ \frac{n-1}{2} \end{array} \! \right) & \text{if } n \text{ is odd} \\
& \\
0 & \text{if } n \text{ is even}
\end{array} \right. \right] ,
\end{displaymath}
where $\phi$ is the Euler totient function. Moreover, \begin{displaymath}
\lim_{n \rightarrow +\infty}{(\sigma_n)^{1/n}} = 4.
\end{displaymath}
\end{mainth1}

\begin{mainth}
\label{mainth}
The number $p_n^{+}$ of topological structure of complex polynomial vector fields of degree $d=n+1$ is

\begin{displaymath}
p^+_n = \frac{1}{2n} \left[ p_n + \sum_{l \geq 0 \atop l|2n} \varphi(l). \left\{ \begin{array}{cc}
\sum\limits_{k \geq 0}{p_{n/l,k}.(k+1)} & \text{ if } 2n/l \text{ even} \\
c_{2n/l} & \text{ if } 2n/l \text{ odd}
\end{array} \right. + \left\{ \begin{array}{cc}
0 & \text{ if } n \text{ even} \\
2n. p_{(n-1)/2} & \text{ if } n \text{ odd}
\end{array} \right. \right], \quad n \geq 1,
\end{displaymath}
with 
\begin{displaymath}
p_{n,k} = \frac{(-2n)_k (-n)_k}{(2)_k} \frac{2^k}{k!}, \qquad p_n = \sum_{k \geq 0}{p_{n,k}},
\end{displaymath}
and
\begin{eqnarray*}
c_{2m+1} & = & -\sum_{k=0}^{m} a_k \left( \sum_{i=0}^{m-k} \alpha^{i} \beta^{m-k-i} \right), \\
c_{2m} & = & c_{2m+1} - 4 \sum_{j=0}^{m-1} c_{2j+1} p_{m-j-1},
\end{eqnarray*}
where $\alpha$ and $\beta$ are the roots of the polynomial $X^2 +11X -1$, and
\begin{displaymath}
a_0 = -1 \qquad a_1 = 4,
\end{displaymath}
\begin{displaymath}
a_n = 6 \sum_{k=0}^{n-2} p_k p_{n-2-k} - 2 \sum_{k=0}^{n-2} p_k p_{n-1-k} + p_n, \quad \text{for } n \geq 2.
\end{displaymath}
Moreover, \begin{displaymath}
\lim_{n \rightarrow +\infty}{(p^+_n)^{1/n}} = \frac{2}{5\sqrt{5} - 11}.
\end{displaymath}
\end{mainth}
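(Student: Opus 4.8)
The plan is to obtain $p_n^+$ from the rooted count $p_n$ of \cite{Dias2} by Liskovets' quotient method for unrooted planar maps (\cite{Lisk}), along the same lines as the proof of Theorem \ref{mainth1} but with the (richer) non-generic combinatorial models of Section 3 in place of the generic ones. A degree $d=n+1$ vector field has $2(d-1)=2n$ separatrices reaching infinity, and the models of Section 3 that carry a marking are, by \cite{Dias2} and its reworking in Section 3, enumerated by $p_n=\sum_k p_{n,k}$; forgetting the marking amounts to passing to orbits of the cyclic group $C_{2n}=\mathbb{Z}/2n\mathbb{Z}$ that cyclically advances the distinguished separatrix — one step being realised by $z\mapsto e^{i\pi/n}z$ together with the identification $\xi_P\sim\xi_{-P}$, and even-numbered steps by genuine $(d-1)$-st roots of unity. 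Since a complex vector field is canonically oriented there are no reflections, the automorphism groups in play are cyclic, and Liskovets' machinery applies. By Burnside's lemma $p_n^+=\frac{1}{2n}\sum_{h\in C_{2n}}|\mathrm{Fix}(h)|$, and $|\mathrm{Fix}(h)|$ depends only on the order $l$ of $h$ (a model invariant under $\langle h\rangle$ is reconstructed from its quotient, independently of the chosen generator); grouping the $\varphi(l)$ elements of each order gives
\[ p_n^+=\frac{1}{2n}\Big[\,p_n+\sum_{\substack{l\mid 2n\\ l\ge 2}}\varphi(l)\,q_{n,l}+\varepsilon_n\,\Big], \]
where $p_n$ is the identity term, $q_{n,l}=|\mathrm{Fix}(h_l)|$ for $h_l$ of order $l$, and $\varepsilon_n$ is an exceptional contribution discussed below.

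The core of the argument is to identify $q_{n,l}$ with a count of \emph{quotient models}. A model invariant under the order-$l$ rotation descends, via $z\mapsto z^l$, to a combinatorial model on $\mathbb{C}$ with $2n/l$ ends at infinity and a distinguished cone point of order $l$ over the origin, and this passage is a bijection onto the corresponding set of decorated quotient models. When $2n/l$ is even the quotient is exactly a (possibly non-generic) combinatorial model of a degree $n/l+1$ vector field together with a legal placement of the cone point; tracking the $k$ multiple equilibria of the quotient model, there are $k+1$ admissible placements, so $q_{n,l}=\sum_{k\ge 0}p_{n/l,k}(k+1)$. When $2n/l$ is odd, no genuine vector field has an odd number of ends, so the quotient is a new class of planar objects — morally a ``half-integer degree'' model, the cone point standing in for a missing pair of ends — and enumerating these is the technical heart: I would set up the generating-function equation for this class from the same recursive peeling used for the ordinary models, solve it, and extract coefficients; the characteristic equation of the resulting linear recursion is $X^2+11X-1$ (roots $\alpha,\beta\in\mathbb{Q}(\sqrt5)$), while the auxiliary sequence $a_n$ records the ``boundary'' convolutions with the ordinary series, and together these produce the numbers $c_{2n/l}=q_{n,l}$. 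Finally, when $n$ is odd there is one further family of invariant configurations, namely those carrying a half-turn symmetry whose centre lies on an edge rather than over the origin (the analogue of the central-edge configurations of the generic case in Section 3); a direct analysis of these yields $\varepsilon_n=2n\,p_{(n-1)/2}$, whereas $\varepsilon_n=0$ for $n$ even. Assembling the three contributions gives the stated formula.

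I expect the $2n/l$ odd case, together with the exceptional term, to be the main obstacle: one must pin down exactly which planar structure the orbifold quotient produces once the number of ends becomes odd, find a workable recursive decomposition for it, and then push the resulting functional equation all the way to the explicit recursions for $c_{2m+1}$ and $c_{2m}$ — in particular accounting for the appearance of $X^2+11X-1$ and of the coefficients $a_n$. By contrast the $2n/l$ even case reduces transparently to the already-known numbers $p_{n/l,k}$, and the bijection with quotient models is routine once the orbifold picture is set up.

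For the asymptotics, note that every term in the bracket other than $p_n$ is built from the numbers $p_m$ or $c_m$ with $m\le n$, but always entering through $p_{n/l}$ or $c_{2n/l}$ with $l\ge 2$; if $r$ denotes the exponential growth rate of $p_n$ (the reciprocal of the dominant singularity of $P(x)=\sum_n p_n x^n$), then $c_{2m+1}$ grows only like $r^{m}$, i.e.\ like $r^{N/2}$ in its size $N=2m+1$, and since $\varphi(l)$ and the number of divisors of $2n$ are negligible the entire correction is $o(r^n)$. Hence $2n\,p_n^+=p_n(1+o(1))$ and $\lim_n(p_n^+)^{1/n}=\lim_n(p_n)^{1/n}=r$. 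It remains to compute $r$ by a saddle-point analysis of the explicit sum $p_n=\sum_k\binom{2n}{k}\binom{n}{k}\frac{2^k}{k+1}$ — equivalently by singularity analysis of $P(x)$, which is algebraic via the recursive decomposition of the non-generic models of \cite{Dias2}: maximising the exponential rate over the summation variable gives $r=\frac{11+5\sqrt5}{2}=\frac{2}{5\sqrt5-11}$, the fifth power of the golden ratio $\frac{1+\sqrt5}{2}$, as asserted.
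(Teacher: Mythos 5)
Your outline reproduces the paper's frame — Liskovets' quotient method for unrooted maps applied to the combinatorial models of Section 3, with the identity term $p_n$, a sum over divisors $l$ of $2n$ split according to the parity of $2n/l$, and an exceptional edge--centred contribution $2n\,p_{(n-1)/2}$ for $n$ odd — and the $2n/l$ even case is handled essentially as in the paper (though note that in $p_{n/l,k}$ the index $k$ counts the edges of the quotient generalized tree, equivalently $k+1$ is its number of vertices/equilibrium points, not the number of \emph{multiple} equilibria; the factor $k+1$ is the choice of the axial vertex). However, for the $2n/l$ odd case and for the closed form of $c_m$ you only state an expectation of what should happen, and this is precisely where the substance of the theorem lies. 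You do not identify what the quotient object is when the number of boards becomes odd, you do not derive any recursion for its counting sequence, and the assertion that ``the characteristic equation of the resulting linear recursion is $X^2+11X-1$'' is not substantiated — indeed the $c_m$ do not satisfy a two-term linear recursion with that characteristic polynomial; the polynomial enters in a different way. As written, the proposal proves the shape of the Burnside/Liskovets formula but not the formula of the theorem, since the quantities $c_{2n/l}$, the recursion they satisfy, and their closed form in terms of $\alpha,\beta$ and the $a_n$ are exactly the unproved parts.

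Concretely, the paper fills this gap in two steps you would still have to supply. First, it shows that the rooted quotient maps arising when $2n/l$ is odd correspond bijectively to \emph{quasi-valid bracketings}: valid bracketings with one extra dot inserted anywhere in the string; peeling off the first symbol then yields the recursion $c_m=c_{m-1}+2\sum_{2\alpha+\beta+2=m}b_{2\alpha}c_{\beta}+2\sum_{2\alpha+\beta+3=m}c_{2\alpha+1}b_{\beta}$, which is what makes $c_{2n/l}$ a legitimate count of fixed points. Second, splitting $c$ into even/odd parts $r_m=c_{2m}$, $s_m=c_{2m+1}$ and passing to generating functions, it eliminates $q$ and $r$ to get $s\,(4\tilde p^{2}-6\tilde p+1)=p(1-2zp)$ with $\tilde p=zp$, and then inverts $4\tilde p^{2}-6\tilde p+1$ using the algebraic relation $4\tilde p^{3}=4\tilde p^{2}-(z+1)\tilde p+z$ satisfied by $p$; this is where $z^{2}+11z-1$ appears (as the denominator of the inverse, not as a characteristic polynomial of a recursion for $c_m$), and the $a_n$ are the coefficients of $2z(3z-1)p^{2}+(2z+1)p+(z-2)$, giving $c_{2m+1}=-\sum_{k=0}^{m}a_k\sum_{i=0}^{m-k}\alpha^{i}\beta^{m-k-i}$ and then $c_{2m}$ from $s=r+4zps$. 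Your asymptotic argument is serviceable in outline (the corrections are indeed $o$ of $p_n$ up to the growth rate), but the paper gets the limit more cheaply from the sandwich $\frac{1}{2n}p_n\le p_n^{+}\le p_n$ together with the known value of $\lim_n (p_n)^{1/n}$ from K.~Dias, rather than a saddle-point evaluation; if you keep your route you must actually carry out that evaluation, and also justify the growth bound you assert for $c_{2m+1}$, which again presupposes the generating-function identity you have not derived.
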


The proof of this second theorem will be in three parts. In subsection $4.3$, we explain and give an explicit formula for the number $p_n$, then in subsection $4.4$, we prove Theorem \ref{thenpoly}, which is similar to Theorem \ref{mainth} but with a recursive formula of the numbers $c_n$. Finally, in the last subsection, we give a closed formula for $c_n$.

\vspace{3mm}

\textbf{Acknowledgements.} I am grateful to Tan Lei for introducing the subject, useful conversations and a careful reading of this work during its preparation.

\section{Basic facts about polynomial vector fields.}

Now we give a quick description of the main notions and definitions necessary to understand and to describe the combinatorial data set used in this article. For details about these results, see \cite{DES} and \cite{Dias1}.

A fundamental property of polynomial vector fields, that make their study simpler, is that they don't have a limit cycle. It's a direct consequence of the following property:

\begin{propcycle}
Any accumulation point of a non-periodic orbit of $\xi_P$ is a root of the polynomial $P$.
\end{propcycle}

In other words, for all non-periodic maximal solution $\gamma(t,z)$ of $\xi_P$ defined on the interval $]t_{min} , t_{max}[$, 
\begin{itemize}
\item if $t_{min} = -\infty$, $\gamma(t,z)$ tends to an equilibrium point of $\xi_P$ as $t$ tends to $-\infty$.
\item if $t_{max} = +\infty$, $\gamma(t,z)$ tends to an equilibrium point of $\xi_P$ as $t$ tends to $+\infty$.
\item if $t_{min}$ (resp. $t_{max}$) is a finite number, $\gamma(t,z)$ tends to infinity as $t$ tends to $t_{min}$ (resp. $t_{max}$).
\end{itemize}

\vspace{2mm}

Given $P \in \mathcal{P}_d$, let $\zeta$ be a root of the polynomial $P$. Then the vector field $\xi_P$ associated to $P$ admits an equilibrium point (or singularity) at the point $\zeta$, and this singularity can be of four different types:

\begin{itemize}
\item $\zeta$ is a source if $Re(P'(\zeta)) > 0$.
\item $\zeta$ is a sink if $Re(P'(\zeta)) < 0$.
\item $\zeta$ is a center if $Re(P'(\zeta)) = 0$ and $Im(P'(\zeta)) \neq 0$.
\item $\zeta$ is a multiple equilibrium point of multiplicity $m \geq 2$ if $P'(\zeta) = \ldots = P^{(m-1)}(\zeta) = 0$ and $P^{(m)}(\zeta) \neq 0$.
\end{itemize}

Each type of singularities influences the global structure of the vector field in that a given singularity $\zeta$ determines the behavior of the solutions passing through a neighborhood of it. This zone of influence is called \textbf{basin}, denoted by $\mathcal{B}(\zeta)$, and is defined as follows:

\begin{itemize}
\item If $\zeta$ is a source, $\mathcal{B}(\zeta) = \{ z \in \mathbb{C} \mid  \gamma(t,z) \rightarrow \zeta \text{ for } t \rightarrow -\infty \}$.
\item If $\zeta$ is a sink, $\mathcal{B}(\zeta) = \{ z \in \mathbb{C} \mid  \gamma(t,z) \rightarrow \zeta \text{ for } t \rightarrow +\infty \}$.
\item If $\zeta$ is a center, $\mathcal{B}(\zeta) = \{ \zeta \} \cup \{ z \in \mathbb{C} \mid  \gamma(.,z) \text{ is periodic and } \zeta \text{ is in the bounded component of } \mathbb{C} \setminus \gamma(\mathbb{R},z) \}$.
\item If $\zeta$ is a multiple equilibrium point, $\mathcal{B}(\zeta) = \mathcal{B}_{\alpha}(\zeta) \cup \mathcal{B}_{\omega}(\zeta) \cup \{ \zeta \}$, where
\begin{eqnarray*}
\mathcal{B}_{\alpha}(\zeta) & = &  \{ z \neq \zeta \mid \gamma(t,z) \rightarrow \zeta \text{ for } t \rightarrow -\infty \} \quad \text{is the repelling basin} \\
\mathcal{B}_{\omega}(\zeta) & = &  \{ z \neq \zeta \mid \gamma(t,z) \rightarrow \zeta \text{ for } t \rightarrow +\infty \} \quad \text{is the attracting basin}
\end{eqnarray*}
\end{itemize}

The connected components of $\mathcal{B}_{\alpha}(\zeta)$ and $\mathcal{B}_{\omega}(\zeta)$ are called repelling petals and attracting petals respectively. In all four cases, $\mathcal{B}(\zeta)$ is an open, simply-connected domain containing $\zeta$. See Figure \ref{figexpoly} for an example showing the four different types of singularity.

\begin{figure}[ht]
\begin{center}
\scalebox{0.5} 
{
\begin{pspicture}(0,-7.078052)(21.02,7.078052)
\psdots[dotsize=0.2](18.88,-1.2780521)
\psdots[dotsize=0.2](16.42,0.6219479)
\psdots[dotsize=0.2](17.98,2.241948)
\psdots[dotsize=0.2](12.3,-0.15805209)
\psdots[dotsize=0.2](8.58,-3.7380521)
\psdots[dotsize=0.2](4.64,-2.1980522)
\psdots[dotsize=0.2](3.06,-4.038052)
\psdots[dotsize=0.2](6.54,3.7819479)
\psbezier[linewidth=0.04,linestyle=dashed,dash=0.16cm 0.16cm](1.26,5.261948)(1.8,4.801948)(6.6041026,1.0837426)(7.92,2.361948)(9.235898,3.6401532)(7.42,5.741948)(6.96,6.521948)
\psbezier[linewidth=0.04,linestyle=dashed,dash=0.16cm 0.16cm](4.66,-2.2180521)(5.38,-2.6980522)(5.74795,-3.400651)(5.74,-4.158052)(5.73205,-4.915453)(5.44,-5.698052)(4.8,-6.958052)
\psbezier[linewidth=0.04,linestyle=dashed,dash=0.16cm 0.16cm](4.6,-2.1780522)(3.8,-1.6780521)(3.2983806,-1.4635454)(2.32,-1.1180521)(1.3416193,-0.77255875)(0.86,-0.8980521)(0.0,-0.8580521)
\psbezier[linewidth=0.04,linestyle=dashed,dash=0.16cm 0.16cm](3.02,-4.058052)(2.26,-4.878052)(2.4,-4.778052)(1.24,-5.8580523)
\psbezier[linewidth=0.04,linestyle=dashed,dash=0.16cm 0.16cm](8.58,-3.6780522)(8.88,-4.058052)(9.363518,-4.788908)(9.5,-5.118052)(9.636482,-5.447196)(9.72,-6.3180523)(9.64,-7.058052)
\psbezier[linewidth=0.04,linestyle=dashed,dash=0.16cm 0.16cm](0.12,1.9819479)(0.8,1.7419479)(4.824745,0.99181074)(6.14,0.4419479)(7.455255,-0.107914925)(10.2,-1.8780521)(10.9,-2.7180521)(11.6,-3.558052)(13.34,-6.198052)(13.54,-7.058052)
\psbezier[linewidth=0.04,linestyle=dashed,dash=0.16cm 0.16cm](12.32,-0.15805209)(12.76,-0.7980521)(14.403013,-2.5841496)(15.06,-3.338052)(15.716987,-4.0919547)(17.22,-6.078052)(18.18,-6.878052)
\psbezier[linewidth=0.04,linestyle=dashed,dash=0.16cm 0.16cm](12.28,-0.15805209)(11.6,0.9819479)(10.8,2.201948)(10.56,3.181948)(10.32,4.1619477)(9.96,5.541948)(9.9,6.521948)
\psbezier[linewidth=0.04,linestyle=dashed,dash=0.16cm 0.16cm](16.42,0.6219479)(16.66,-0.09805209)(16.848816,-1.1985806)(17.38,-2.078052)(17.911184,-2.9575236)(19.34,-4.098052)(20.24,-4.918052)
\psbezier[linewidth=0.04,linestyle=dashed,dash=0.16cm 0.16cm](16.42,0.6419479)(16.34,1.681948)(16.090956,2.7089837)(16.5,3.7819479)(16.909044,4.8549123)(17.68,5.7019477)(18.5,6.321948)
\psbezier[linewidth=0.04,linestyle=dashed,dash=0.16cm 0.16cm](18.0,2.241948)(19.06,3.5019479)(19.24,3.5419478)(20.48,4.081948)
\psbezier[linewidth=0.04,linestyle=dashed,dash=0.16cm 0.16cm](18.86,-1.2580521)(19.78,-1.2780521)(20.2,-1.2780521)(21.0,-1.2980521)
\psbezier[linewidth=0.04,linestyle=dashed,dash=0.16cm 0.16cm](16.4,0.6219479)(17.26,0.5819479)(19.72,0.8619479)(20.8,1.301948)
\psbezier[linewidth=0.04,linestyle=dashed,dash=0.16cm 0.16cm](12.3,-0.09805209)(12.84,0.4819479)(13.976947,1.7463572)(14.46,2.621948)(14.943053,3.4975386)(15.66,5.321948)(15.82,6.501948)
\psbezier[linewidth=0.04](12.3,-0.13805209)(13.36,0.3219479)(14.62,2.461948)(15.1,2.461948)(15.58,2.461948)(16.04,1.6019479)(16.4,0.6819479)
\psbezier[linewidth=0.04](12.3,-0.16326536)(13.208571,-0.7911326)(15.389143,-3.538052)(15.954476,-3.3418436)(16.51981,-3.1456351)(16.54,-0.7911326)(16.418858,0.6019479)
\psbezier[linewidth=0.04](12.26,-0.13805209)(13.46,-0.1980521)(15.68,0.121947914)(16.42,0.6419479)
\psbezier[linewidth=0.04](16.42,0.6619479)(17.54,0.82194793)(19.78,0.7819479)(19.84,1.9019479)(19.9,3.0219479)(18.92,2.901948)(17.98,2.2619479)
\psbezier[linewidth=0.04](16.4,0.6419479)(16.52,1.4819479)(16.48,4.341948)(17.64,4.341948)(18.8,4.341948)(18.64,3.3219478)(17.98,2.2819479)
\psbezier[linewidth=0.04](16.42,0.6419479)(17.46,1.2419479)(17.56,1.4419479)(17.96,2.2619479)
\psbezier[linewidth=0.04](16.42,0.6019479)(16.72,-0.09805209)(18.54,-4.138052)(19.68,-3.098052)(20.82,-2.058052)(20.06,-1.318052)(18.88,-1.2780521)
\psbezier[linewidth=0.04](16.46,0.6419479)(17.44,0.3819479)(19.613354,0.9793113)(20.06,0.021947911)(20.506647,-0.93541545)(19.96,-1.1580521)(18.94,-1.2380521)
\psbezier[linewidth=0.04](16.4,0.6219479)(16.82,-0.13805209)(18.1,-1.1580521)(18.84,-1.2780521)
\psbezier[linewidth=0.04](12.28,-0.11805209)(13.14,1.061948)(15.776803,5.305844)(15.1,6.1819477)(14.423197,7.058052)(11.7,6.921948)(10.7,6.221948)(9.7,5.521948)(11.6,1.1419479)(12.26,-0.15805209)
\psbezier[linewidth=0.04](12.28,-0.15805209)(12.58,0.76194793)(14.115043,3.8294706)(13.6,4.341948)(13.084958,4.8544254)(12.06,4.941948)(11.42,4.381948)(10.78,3.8219478)(11.92,0.9219479)(12.3,-0.13805209)
\psbezier[linewidth=0.04](12.26,-0.11805209)(11.28,0.9219479)(10.019933,3.9135287)(9.28,3.901948)(8.540067,3.8903673)(9.2,2.641948)(8.62,2.0819478)(8.04,1.5219479)(6.2308245,1.7933534)(6.14,1.2219479)(6.0491757,0.6505424)(9.880507,-1.1448877)(10.62,-1.818052)(11.359493,-2.4912164)(13.140292,-5.562238)(13.74,-5.598052)(14.339707,-5.6338663)(15.05788,-4.39805)(15.04,-3.858052)(15.0221195,-3.3180544)(13.12,-1.5580521)(12.3,-0.15805209)
\psbezier[linewidth=0.04](12.26,-0.13805209)(11.66,0.1619479)(10.46,1.801948)(9.74,1.661948)(9.02,1.5219479)(9.18784,0.45317638)(9.26,-0.07805209)(9.332159,-0.6092805)(11.56,-2.278052)(12.04,-2.578052)(12.52,-2.878052)(13.869439,-3.6968293)(13.9,-3.058052)(13.930561,-2.419275)(12.62,-0.9780521)(12.28,-0.1980521)
\psbezier[linewidth=0.04](5.58,4.601948)(6.262626,5.332716)(7.4715185,4.9154963)(7.7,3.941948)(7.9284816,2.9683998)(7.31611,2.4246726)(6.34,2.641948)(5.3638897,2.8592234)(4.897374,3.87118)(5.58,4.601948)
\psbezier[linewidth=0.04](4.84,6.1819477)(5.8076634,6.4141936)(7.407258,5.1094813)(7.86,4.2019477)(8.312742,3.2944143)(7.48,2.2819479)(6.54,2.3419478)(5.6,2.401948)(3.7730982,4.053737)(3.72,4.801948)(3.6669018,5.550159)(3.8723369,5.9497023)(4.84,6.1819477)
\psbezier[linewidth=0.04](8.58,-3.6780522)(9.34,-4.118052)(10.12,-6.018052)(10.82,-5.878052)(11.52,-5.738052)(11.6012535,-4.7825227)(11.24,-3.838052)(10.878746,-2.8935816)(8.484328,-1.1449184)(7.58,-0.6580521)(6.675672,-0.17118572)(4.0502315,1.0041002)(3.36,0.30194792)(2.6697688,-0.40020442)(3.58,-1.3980521)(4.6,-2.1780522)
\psbezier[linewidth=0.04](8.56,-3.7380521)(8.92,-4.518052)(9.48,-6.218052)(8.42,-6.438052)(7.36,-6.658052)(6.54,-5.598052)(6.28,-4.398052)(6.02,-3.1980522)(5.48,-2.518052)(4.62,-2.2180521)
\psbezier[linewidth=0.04](8.54,-3.7180521)(8.3,-3.1580522)(7.7793794,-2.451424)(7.08,-2.138052)(6.3806205,-1.8246802)(5.7,-1.6780521)(4.62,-2.1780522)
\psbezier[linewidth=0.04](4.64,-2.1780522)(5.26,-2.878052)(5.5526476,-4.587823)(5.06,-5.398052)(4.5673523,-6.208281)(3.3657815,-6.901153)(2.6,-6.238052)(1.8342185,-5.574951)(2.61938,-4.9557457)(3.06,-4.058052)
\psbezier[linewidth=0.04](4.6,-2.1780522)(3.68,-1.8980521)(1.8,-0.8580521)(1.12,-1.3780521)(0.44,-1.8980521)(0.2925134,-4.1417203)(1.04,-4.718052)(1.7874866,-5.294384)(2.22,-4.638052)(3.06,-4.018052)
\psbezier[linewidth=0.04](3.06,-4.018052)(3.96,-3.318052)(4.24,-2.878052)(4.64,-2.1780522)
\psline[linewidth=0.04cm](20.38,-1.078052)(20.58,-1.2780521)
\psline[linewidth=0.04cm](20.58,-1.2780521)(20.38,-1.438052)
\psline[linewidth=0.04cm](19.74,1.2019479)(19.62,0.9619479)
\psline[linewidth=0.04cm](19.62,0.9619479)(19.8,0.8019479)
\psline[linewidth=0.04cm](20.02,-0.3980521)(20.14,-0.15805209)
\psline[linewidth=0.04cm](20.14,-0.15805209)(20.36,-0.3380521)
\psline[linewidth=0.04cm](19.84,-2.598052)(19.92,-2.838052)
\psline[linewidth=0.04cm](19.94,-2.838052)(20.2,-2.798052)
\psline[linewidth=0.04cm](19.6,2.2619479)(19.84,2.101948)
\psline[linewidth=0.04cm](19.84,2.101948)(20.04,2.2819479)
\psline[linewidth=0.04cm](19.28,3.741948)(19.56,3.661948)
\psline[linewidth=0.04cm](19.56,3.661948)(19.44,3.421948)
\psline[linewidth=0.04cm](18.06,4.461948)(17.78,4.321948)
\psline[linewidth=0.04cm](17.78,4.321948)(18.02,4.101948)
\psline[linewidth=0.04cm](16.9,5.001948)(16.98,4.741948)
\psline[linewidth=0.04cm](16.98,4.741948)(17.26,4.781948)
\psline[linewidth=0.04cm](14.74,3.5819478)(14.96,3.681948)
\psline[linewidth=0.04cm](14.96,3.681948)(15.04,3.461948)
\psline[linewidth=0.04cm](13.1,7.021948)(12.86,6.781948)
\psline[linewidth=0.04cm](12.86,6.781948)(13.08,6.581948)
\psline[linewidth=0.04cm](12.7,4.981948)(12.48,4.761948)
\psline[linewidth=0.04cm](12.48,4.761948)(12.7,4.541948)
\psline[linewidth=0.04cm](10.18,3.741948)(10.44,3.621948)
\psline[linewidth=0.04cm](10.44,3.621948)(10.58,3.8419478)
\psline[linewidth=0.04cm](14.04,1.7419479)(14.3,1.7819479)
\psline[linewidth=0.04cm](14.3,1.7819479)(14.3,1.541948)
\psline[linewidth=0.04cm](14.54,0.2419479)(14.78,0.08194791)
\psline[linewidth=0.04cm](14.78,0.08194791)(14.58,-0.11805209)
\psline[linewidth=0.04cm](14.78,-2.358052)(14.82,-2.618052)
\psline[linewidth=0.04cm](14.82,-2.618052)(14.56,-2.638052)
\psline[linewidth=0.04cm](15.9,-4.058052)(15.88,-4.3380523)
\psline[linewidth=0.04cm](15.88,-4.3380523)(15.6,-4.3180523)
\psline[linewidth=0.04cm](8.98,0.4819479)(9.18,0.7219479)
\psline[linewidth=0.04cm](9.18,0.7219479)(9.4,0.50194794)
\psline[linewidth=0.04cm](8.3,2.161948)(8.62,2.0619478)
\psline[linewidth=0.04cm](8.62,2.0619478)(8.6,1.7619479)
\psline[linewidth=0.04cm](14.42,-4.778052)(14.52,-5.078052)
\psline[linewidth=0.04cm](14.52,-5.078052)(14.82,-5.038052)
\psline[linewidth=0.04cm](3.92,3.641948)(3.96,3.361948)
\psline[linewidth=0.04cm](3.96,3.361948)(3.72,3.2819479)
\psline[linewidth=0.04cm](7.76,5.001948)(7.8,5.241948)
\psline[linewidth=0.04cm](7.8,5.241948)(8.04,5.1419477)
\psline[linewidth=0.04cm](6.1,6.061948)(5.86,6.021948)
\psline[linewidth=0.04cm](5.86,6.021948)(5.9,5.761948)
\psline[linewidth=0.04cm](5.44,4.721948)(5.46,4.461948)
\psline[linewidth=0.04cm](5.46,4.461948)(5.74,4.481948)
\psline[linewidth=0.04cm](4.72,1.1219479)(4.52,0.94194794)
\psline[linewidth=0.04cm](4.52,0.94194794)(4.66,0.6819479)
\psline[linewidth=0.04cm](12.6,-5.538052)(12.66,-5.278052)
\psline[linewidth=0.04cm](12.66,-5.278052)(12.94,-5.3580523)
\psline[linewidth=0.04cm](8.26,-1.078052)(8.56,-1.0380521)
\psline[linewidth=0.04cm](8.26,-1.078052)(8.34,-1.3780521)
\psline[linewidth=0.04cm](7.34,-2.518052)(7.26,-2.2180521)
\psline[linewidth=0.04cm](7.26,-2.2180521)(7.6,-2.1780522)
\psline[linewidth=0.04cm](9.48,-6.278052)(9.68,-6.458052)
\psline[linewidth=0.04cm](9.68,-6.458052)(9.88,-6.2980523)
\psline[linewidth=0.04cm](8.2,-6.278052)(8.06,-6.458052)
\psline[linewidth=0.04cm](8.06,-6.458052)(8.22,-6.638052)
\psline[linewidth=0.04cm](5.18,-5.698052)(5.42,-5.638052)
\psline[linewidth=0.04cm](5.42,-5.638052)(5.5,-5.8580523)
\psline[linewidth=0.04cm](3.38,-6.238052)(3.56,-6.478052)
\psline[linewidth=0.04cm](3.56,-6.478052)(3.4,-6.698052)
\psline[linewidth=0.04cm](0.3,-3.538052)(0.54,-3.298052)
\psline[linewidth=0.04cm](0.54,-3.298052)(0.78,-3.538052)
\psline[linewidth=0.04cm](1.78,-5.118052)(1.78,-5.3580523)
\psline[linewidth=0.04cm](1.78,-5.3580523)(2.04,-5.3380523)
\psline[linewidth=0.04cm](1.48,-0.6380521)(1.62,-0.9180521)
\psline[linewidth=0.04cm](1.62,-0.9180521)(1.36,-1.1180521)
\end{pspicture} 
}
\end{center}
\caption{Example of a polynomial vector field of degree $9$.}
\label{figexpoly}
\end{figure}
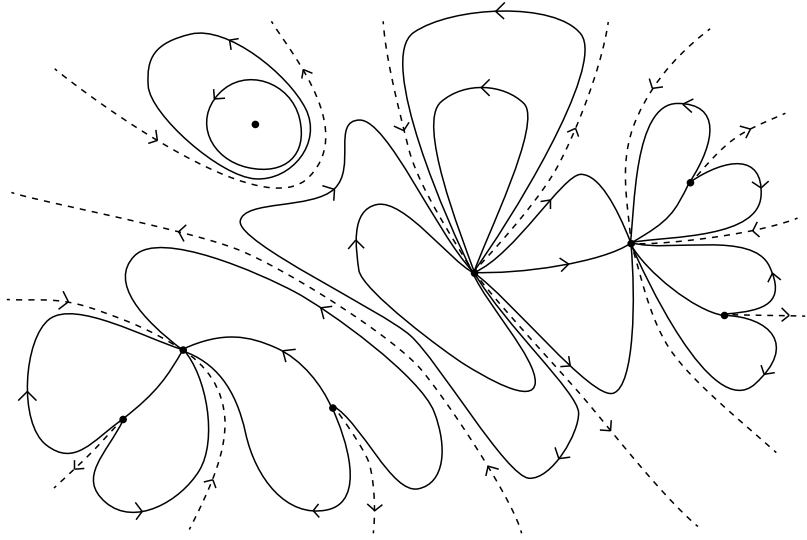

\vspace{2mm}

These tools already allow us to better understand the global structure of polynomial vector fields. In order to define this structure, a study of the vector field at infinity turns out to be of the most interesting. In fact, for every polynomial $P \in \mathcal{P}_d$, there exists a unique $\mathbb{C}$-analytic isomorphism $\Phi$, tangent to the identity at infinity, such that for each solution $\gamma_P(t,z)$ of $\xi_P$ defined in a neighborhood of infinity, there exists a solution $\gamma$ of the vector field $\xi_d$, defined by the polynomial $X^d$, such that
\begin{displaymath}
\Phi(\gamma_P(t,z)) = \gamma(t,\Phi(z)).
\end{displaymath}
This relationship is also abbreviated as follows : $\Phi_{\ast}(\xi_P) = \xi_d$.

\vspace{2mm}

In other words, the behavior of the vector field $\xi_P$ in a neighborhood of infinity is uniquely determined by the degree of the polynomial $P$. The study of the vector field $\xi_d$ allows us to deduce that there exist $2d-2$ solutions $\gamma_l$, with $l \in \{0,\ldots, 2d-3\}$, of the polynomial differential equation $\dot{z} = P(z)$ defined in a neighborhood of infinity and asymptotic to the ray $t.\delta_l$ for $t$ large enough, where 
\begin{equation}
\label{pointdelta}
\delta_l = exp\left( \pi i \frac{l}{d-1} \right), \quad l \in \{0, \ldots , 2d-3\},
\end{equation}
is the consecutive $2(d-1)$-th roots of unity.

\vspace{2mm}

Moreover, if $l$ is odd, there exists $\alpha_l \in ]-\infty , 0[$ such that $\gamma_l$ is defined on $]\alpha_l , 0]$ and $|\gamma_l(t)| \rightarrow \infty$ as $t \rightarrow \alpha_l$. Similarly, if $l$ is even, there exists $\beta_l \in ]0, +\infty[$ such that $\gamma_l$ is defined on $]0, \beta_l[$ and $|\gamma_l(t)| \rightarrow \infty$ as $t \rightarrow \beta_l$.
Then we call separatrices of the vector field $\xi_P$, noted $s_l$, the maximal trajectories of $\xi_P$ which coincide with the particular solutions $\gamma_l$. We distinguish three types of separatrices:

\begin{itemize}
\item $s_l$ is an outgoing separatrix from the point at infinity if $\zeta := \overline{s}_l \setminus s_l$ is the limit point of $s_l$ as $t$ tends to $+\infty$ (this implies that $l$ is odd).
\item $s_l$ is an incoming separatrix to the point at infinity if $\zeta := \overline{s}_l \setminus s_l$ is the limit point of $s_l$ as $t$ tends to $-\infty$ (this implies that $l$ is even).
\item $s_l$ is a homoclinic separatrix of infinity if $\overline{s}_l \setminus s_l = \emptyset$ in $\mathbb{C}$. In this case, the separatrix $s_l$ is both outgoing from and incoming to infinity, so this maximal trajectory is defined by two particular solutions $\gamma_k$ and $\gamma_j$, with $k$ odd and $j$ even. In this case, $l=k$ or $j$, that's why we note also $s_l = s_{k,j}$.
\end{itemize}

These trajectories are the only ones which tend to infinity. Note that the way we present the separatrices here imposes a label, and so an order between these solutions. This is the main issue of the enumeration presented in this article.

\vspace{3mm}

The \textbf{separatrix graph} (or configuration) $\Gamma_P$ of $\xi_P$ is defined as follows:
\begin{displaymath}
\Gamma_P = \bigcup_{l=0}^{2d-3}{\widehat{s}_l},
\end{displaymath}
where $\widehat{s}_l$ is the closure of $s_l$ in the Riemann sphere $\widehat{\mathbb{C}}$.

The separatrix graph is an important tool for our study. In fact, the global phase portrait of a polynomial equation (\ref{poldiffeq}) is univocally determined by its separatrix graph, in the sense that the separatrix graph is perfect to identify the topological structure of polynomial vector fields. According to that result, to give a topological enumeration of complex polynomial vector fields, we need to know how many separatrix graph we can construct (up to equivalence).

\vspace{2mm}

Now, let $\Gamma_P$ be the separatrix graph on the compactification of $\mathbb{C}$ associated to the polynomial $P$, and embed it into the closed unit disk $\overline{\mathbb{D}}$ so that the point at infinity be sent on the unit circle, i.e. it performs a blowing up of the sphere at the point $\infty$. Interest of such an embedding is to highlight the common property of polynomial vector fields in a neighborhood of infinity, i.e. the directions of the separatrices to infinity. Consider $\mathbb{S}^1$ as the disjoint union of half-closed intervals $\tilde{E}_j = \{ e^{2\pi it} , t \in ]\theta_{j-1},\theta_j] \}$, where $\theta_j = \frac{j}{2(d-1)}$, and $E_j$ the interior of $\tilde{E}_j$. The half-closed intervals $\tilde{E}_j$ are called the \textbf{boards} of the unit disk.

\vspace{2mm}

Let $Z$ be a connected component of $\overline{\mathbb{D}} \setminus \Gamma_P$ (where the separatrix graph $\Gamma_P$ is embedded in $\overline{\mathbb{D}}$). Such a component is called \textbf{zone} and can be of three different types:

\begin{enumerate}
\item[1.] There is no equilibrium point on the boundary $\partial Z$. In this case, $Z$ is called a center zone that contains a center, and all trajectories in $Z$ are periodic (with the same period). Moreover, the boundary of $Z$ consists of one or several homoclinic separatrices, and if a center zone contains $k$ homoclinic separatrices on the boundary $\partial Z$, then the center zone intersects $\overline{\mathbb{D}}$ at $k$ open arcs $E_{i_1}, \ldots , E_{i_k}$. 

\item[2.] There is exactly one equilibrium point on the boundary $\partial Z$. Then this equilibrium point is necessary a multiple equilibrium point $\zeta$. In this case, $Z = \mathcal{B}_{\alpha}(\zeta) \cap \mathcal{B}_{\omega}(\zeta)$, is called a sepal zone. There are exactly $2m-2$ sepals corresponding to a multiple equilibrium point of multiplicity $m$. Moreover the boundary of $Z$ contains exactly one incoming and one outgoing separatrix, and possibly one or several homoclinic separatrices. If a sepal zone contains $k$ homoclinic separatrices on its boundary, then it intersects $\overline{\mathbb{D}}$ at $k+1$ open arcs.

\item[3.] There are exactly two equilibrium points $\zeta_{\alpha}$ and $\zeta_{\beta}$ on the boundary $\partial Z$. In this case, $Z$ contains no equilibrium point inside. It is called an $\alpha \omega$-zone and is of four subtypes:
\begin{itemize}
\item $Z = \mathcal{B}(\zeta_{\alpha}) \cap \mathcal{B}(\zeta_{\omega})$, where $\zeta_{\alpha}$ and $\zeta_{\omega}$ are a source and a sink respectively.
\item $Z = \mathcal{B}(\zeta_{\alpha}) \cap \mathcal{B}_{\omega}(\zeta_{\omega})$, where $\zeta_{\alpha}$ and $\zeta_{\omega}$ are a source and a multiple equilibrium point respectively. In this case $Z$ is called an attracting interpetal for $\zeta_{\omega}$.
\item $Z = \mathcal{B}_{\alpha}(\zeta_{\alpha}) \cap \mathcal{B}(\zeta_{\omega})$, where $\zeta_{\alpha}$ and $\zeta_{\omega}$ are a multiple equilibrium point and a sink respectively. In this case $Z$ is called an repelling interpetal for $\zeta_{\alpha}$.
\item $Z = \mathcal{B}_{\alpha}(\zeta_{\alpha}) \cap \mathcal{B}_{\omega}(\zeta_{\omega})$, where $\zeta_{\alpha}$ and $\zeta_{\omega}$ are both multiple equilibrium points. In this case $Z$ is a repelling interpetal for $\zeta_{\alpha}$ and an attracting interpetal for $\zeta_{\omega}$.
\end{itemize}
The boundary of $Z$ contains one or two incoming separatrices and one or two outgoing separatrices, and possibly one or several homoclinic separatrices. Moreover if $Z$ is both on the left of $k$ (oriented) homoclinic separatrices and on the right of $l$ (oriented) homoclinic separatrices on its boundary, then $Z$ intersects $\overline{\mathbb{D}}$ at $k+1$ odd open arcs and $l+1$ even open arcs.
\end{enumerate}

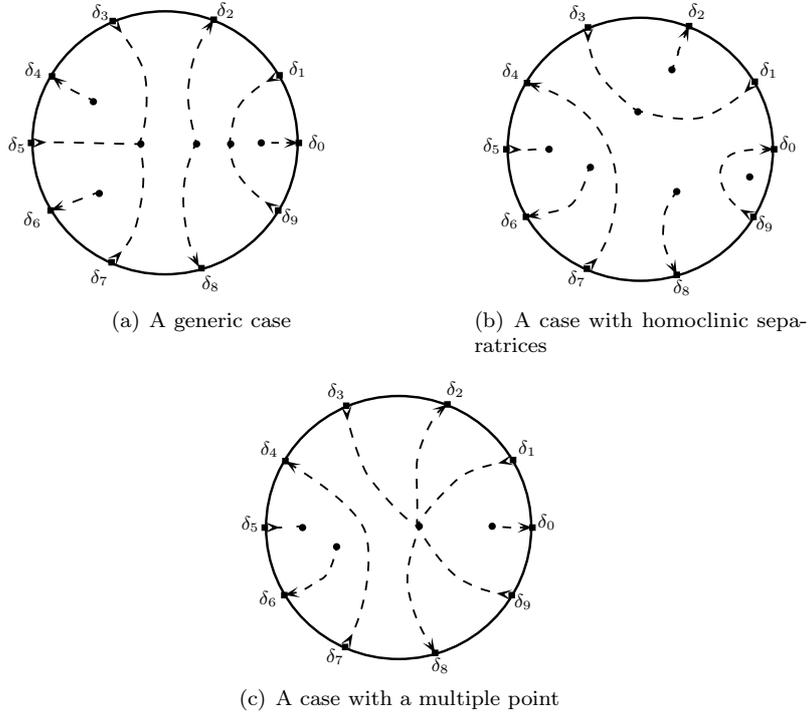
\begin{figure}[ht]
\centering
\subfigure[hang,small][A generic case]{
\scalebox{0.8}{
\begin{pspicture}(0,-2.43875)(6.395,2.43875)
\pscircle[linewidth=0.04,dimen=outer](2.6021874,0.0946875){2.2}
\psdots[dotsize=0.12,dotstyle=square*](4.8021874,0.0946875)
\psdots[dotsize=0.12,dotstyle=square*](0.4021875,0.0946875)
\psdots[dotsize=0.12,dotstyle=square*](3.4021876,2.1346874)
\psdots[dotsize=0.12,dotstyle=square*](1.7421875,2.1146874)
\psdots[dotsize=0.12,dotstyle=square*](4.4821873,1.2146875)
\psdots[dotsize=0.12,dotstyle=square*](0.7421875,1.1946875)
\psdots[dotsize=0.12,dotstyle=square*](0.7221875,-1.0253125)
\psdots[dotsize=0.12,dotstyle=square*](1.7221875,-1.8853126)
\psdots[dotsize=0.12,dotstyle=square*](3.2021875,-1.9853125)
\psdots[dotsize=0.12,dotstyle=square*](4.4621873,-1.0253125)
\psdots[dotsize=0.12](1.4221874,0.7746875)
\psdots[dotsize=0.12](1.5221875,-0.7453125)
\psdots[dotsize=0.12](2.2021875,0.0746875)
\psdots[dotsize=0.12](3.1221876,0.0746875)
\psdots[dotsize=0.12](4.1821876,0.0946875)
\psdots[dotsize=0.12](3.6821876,0.0746875)
\psbezier[linewidth=0.03,linestyle=dashed,dash=0.16cm 0.16cm,arrowsize=0.12cm 2.0,arrowlength=1.4,arrowinset=0.5]{->}(4.1821876,0.0946875)(4.7421875,0.0946875)(4.3421874,0.0946875)(4.8021874,0.0946875)
\psbezier[linewidth=0.03,linestyle=dashed,dash=0.16cm 0.16cm,arrowsize=0.12cm 2.0,arrowlength=1.4,arrowinset=0.5]{-<}(3.6621876,0.0746875)(3.6821876,-0.2653125)(3.7821875,-0.4253125)(3.9221876,-0.5853125)(4.0621877,-0.7453125)(4.1421876,-0.8053125)(4.4421873,-1.0253125)
\psbezier[linewidth=0.03,linestyle=dashed,dash=0.16cm 0.16cm,arrowsize=0.12cm 2.0,arrowlength=1.4,arrowinset=0.5]{-<}(3.6621876,0.0946875)(3.7221875,0.4346875)(3.7221875,0.6546875)(3.9421875,0.8546875)(4.1621876,1.0546875)(4.2421875,1.0946875)(4.4821873,1.2146875)
\psbezier[linewidth=0.03,linestyle=dashed,dash=0.16cm 0.16cm,arrowsize=0.12cm 2.0,arrowlength=1.4,arrowinset=0.5]{->}(3.1021874,0.0746875)(2.9621875,-0.2853125)(2.9021876,-0.5053125)(2.9021876,-0.8453125)(2.9021876,-1.1853125)(3.0021875,-1.6653125)(3.1621876,-1.9653125)
\psbezier[linewidth=0.03,linestyle=dashed,dash=0.16cm 0.16cm,arrowsize=0.12cm 2.0,arrowlength=1.4,arrowinset=0.5]{->}(3.1221876,0.0546875)(2.9821875,0.6346875)(2.955107,0.825563)(3.0021875,1.1546875)(3.049268,1.483812)(3.1421876,1.7346874)(3.4021876,2.1346874)
\psbezier[linewidth=0.03,linestyle=dashed,dash=0.16cm 0.16cm,arrowsize=0.12cm 2.0,arrowlength=1.4,arrowinset=0.5]{-<}(2.1821876,0.0946875)(2.3021874,-0.4853125)(2.2277858,-0.7910921)(2.1621876,-1.1253124)(2.096589,-1.4595329)(1.9821875,-1.6053125)(1.7221875,-1.8853126)
\psbezier[linewidth=0.03,linestyle=dashed,dash=0.16cm 0.16cm,arrowsize=0.12cm 2.0,arrowlength=1.4,arrowinset=0.5]{-<}(2.1821876,0.0946875)(2.3221874,0.4146875)(2.3221874,1.0146875)(2.2621875,1.2546875)(2.2021875,1.4946876)(1.9221874,1.8746876)(1.7421875,2.1146874)
\psbezier[linewidth=0.03,linestyle=dashed,dash=0.16cm 0.16cm,arrowsize=0.12cm 2.0,arrowlength=1.4,arrowinset=0.5]{->}(1.4221874,0.7946875)(1.2221875,0.8946875)(1.0221875,0.9746875)(0.7421875,1.1946875)
\psbezier[linewidth=0.03,linestyle=dashed,dash=0.16cm 0.16cm,arrowsize=0.12cm 2.0,arrowlength=1.4,arrowinset=0.5]{->}(1.5021875,-0.7253125)(1.2021875,-0.8453125)(1.0421875,-0.9053125)(0.7221875,-1.0253125)
\psbezier[linewidth=0.03,linestyle=dashed,dash=0.16cm 0.16cm,arrowsize=0.12cm 2.0,arrowlength=1.4,arrowinset=0.5]{-<}(2.1821876,0.0746875)(1.6821876,0.0746875)(0.9621875,0.1146875)(0.4021875,0.0946875)
\usefont{T1}{ptm}{m}{n}
\rput(5.1021874,0.0946875){\small $\delta_0$}
\usefont{T1}{ptm}{m}{n}
\rput(4.7821873,1.3146875){\small $\delta_1$}
\usefont{T1}{ptm}{m}{n}
\rput(3.6021876,2.2846874){\small $\delta_2$}
\usefont{T1}{ptm}{m}{n}
\rput(1.5421875,2.2646874){\small $\delta_3$}
\usefont{T1}{ptm}{m}{n}
\rput(0.4421875,1.2946875){\small $\delta_4$}
\usefont{T1}{ptm}{m}{n}
\rput(0.16859375,0.064375){\small $\delta_5$}
\usefont{T1}{ptm}{m}{n}
\rput(0.4221875,-1.1753125){\small $\delta_6$}
\usefont{T1}{ptm}{m}{n}
\rput(1.5221875,-2.1353126){\small $\delta_7$}
\usefont{T1}{ptm}{m}{n}
\rput(3.3521875,-2.2353125){\small $\delta_8$}
\usefont{T1}{ptm}{m}{n}
\rput(4.6621873,-1.1253125){\small $\delta_9$}
\end{pspicture} 
}
} \qquad
\subfigure[hang,small][A case with homoclinic separatrices]{
\scalebox{0.8}{
\begin{pspicture}(0,-2.4203124)(4.9596877,2.4203124)
\pscircle[linewidth=0.04,dimen=outer](2.46875,0.004375){2.2}
\psdots[dotsize=0.12,dotstyle=square*](4.66875,0.004375)
\psdots[dotsize=0.12,dotstyle=square*](0.26875,0.004375)
\psdots[dotsize=0.12,dotstyle=square*](3.26875,2.044375)
\psdots[dotsize=0.12,dotstyle=square*](1.60875,2.024375)
\psdots[dotsize=0.12,dotstyle=square*](4.34875,1.124375)
\psdots[dotsize=0.12,dotstyle=square*](0.60875,1.104375)
\psdots[dotsize=0.12,dotstyle=square*](0.58875,-1.115625)
\psdots[dotsize=0.12,dotstyle=square*](1.58875,-1.975625)
\psdots[dotsize=0.12,dotstyle=square*](3.06875,-2.075625)
\psdots[dotsize=0.12,dotstyle=square*](4.32875,-1.115625)
\usefont{T1}{ptm}{m}{n}
\rput(4.9021874,0.0946875){\small $\delta_0$}
\usefont{T1}{ptm}{m}{n}
\rput(4.5821873,1.3146875){\small $\delta_1$}
\usefont{T1}{ptm}{m}{n}
\rput(3.4021876,2.2846874){\small $\delta_2$}
\usefont{T1}{ptm}{m}{n}
\rput(1.4421875,2.2646874){\small $\delta_3$}
\usefont{T1}{ptm}{m}{n}
\rput(0.3421875,1.2946875){\small $\delta_4$}
\usefont{T1}{ptm}{m}{n}
\rput(0.02859375,0.064375){\small $\delta_5$}
\usefont{T1}{ptm}{m}{n}
\rput(0.3221875,-1.1753125){\small $\delta_6$}
\usefont{T1}{ptm}{m}{n}
\rput(1.4221875,-2.1353126){\small $\delta_7$}
\usefont{T1}{ptm}{m}{n}
\rput(3.1521875,-2.2853125){\small $\delta_8$}
\usefont{T1}{ptm}{m}{n}
\rput(4.5121873,-1.2253125){\small $\delta_9$}
\psdots[dotsize=0.12](2.98875,1.324375)
\psdots[dotsize=0.12](2.42875,0.624375)
\psdots[dotsize=0.12](0.96875,0.004375)
\psdots[dotsize=0.12](1.64875,-0.295625)
\psdots[dotsize=0.12](3.06875,-0.695625)
\psdots[dotsize=0.12](4.26875,-0.455625)
\psbezier[linewidth=0.03,linestyle=dashed,dash=0.16cm 0.16cm,arrowsize=0.12cm 2.0,arrowlength=1.4,arrowinset=0.5]{->}(2.98875,1.324375)(3.22875,1.924375)(2.94875,1.304375)(3.26875,2.044375)
\psbezier[linewidth=0.03,linestyle=dashed,dash=0.16cm 0.16cm,arrowsize=0.12cm 2.0,arrowlength=1.4,arrowinset=0.5]{-<}(2.40875,0.644375)(2.20875,0.784375)(1.94875,1.044375)(1.82875,1.204375)(1.70875,1.364375)(1.60875,1.684375)(1.60875,2.024375)
\psbezier[linewidth=0.03,linestyle=dashed,dash=0.16cm 0.16cm,arrowsize=0.12cm 2.0,arrowlength=1.4,arrowinset=0.5]{-<}(0.96875,0.004375)(0.44875,0.004375)(0.86875,-0.015625)(0.26875,0.004375)
\psbezier[linewidth=0.03,linestyle=dashed,dash=0.16cm 0.16cm,arrowsize=0.12cm 2.0,arrowlength=1.4,arrowinset=0.5]{->}(1.62875,-0.275625)(1.56875,-0.575625)(1.46875,-0.755625)(1.36875,-0.875625)(1.26875,-0.995625)(1.12875,-1.015625)(0.58875,-1.115625)
\psbezier[linewidth=0.03,linestyle=dashed,dash=0.16cm 0.16cm,arrowsize=0.12cm 2.0,arrowlength=1.4,arrowinset=0.5]{<-<}(0.60875,1.104375)(1.32875,0.864375)(1.76875,0.544375)(1.98875,-0.075625)(2.20875,-0.695625)(1.98875,-1.575625)(1.58875,-1.975625)
\psbezier[linewidth=0.03,linestyle=dashed,dash=0.16cm 0.16cm,arrowsize=0.12cm 2.0,arrowlength=1.4,arrowinset=0.5]{-<}(2.42875,0.644375)(2.88875,0.504375)(3.248423,0.48699465)(3.44875,0.524375)(3.649077,0.56175536)(4.06875,0.824375)(4.34875,1.124375)
\psbezier[linewidth=0.03,linestyle=dashed,dash=0.16cm 0.16cm,arrowsize=0.12cm 2.0,arrowlength=1.4,arrowinset=0.5]{<-<}(4.66875,0.004375)(3.90875,0.004375)(3.8783169,-0.13259)(3.80875,-0.275625)(3.739183,-0.41866)(3.78875,-0.815625)(4.32875,-1.115625)
\psbezier[linewidth=0.03,linestyle=dashed,dash=0.16cm 0.16cm,arrowsize=0.12cm 2.0,arrowlength=1.4,arrowinset=0.5]{->}(3.04875,-0.715625)(2.82875,-1.055625)(2.8627622,-1.0783677)(2.84875,-1.275625)(2.8347378,-1.4728824)(2.82875,-1.575625)(3.06875,-2.075625)
\end{pspicture} 
}
}\\
\subfigure[hang,small][A case with a multiple point]{
\scalebox{0.8}{
\begin{pspicture}(0,-2.4203124)(4.9596877,2.4203124)
\pscircle[linewidth=0.04,dimen=outer](2.46875,0.004375){2.2}
\psdots[dotsize=0.12,dotstyle=square*](4.66875,0.004375)
\psdots[dotsize=0.12,dotstyle=square*](0.26875,0.004375)
\psdots[dotsize=0.12,dotstyle=square*](3.26875,2.044375)
\psdots[dotsize=0.12,dotstyle=square*](1.60875,2.024375)
\psdots[dotsize=0.12,dotstyle=square*](4.34875,1.124375)
\psdots[dotsize=0.12,dotstyle=square*](0.60875,1.104375)
\psdots[dotsize=0.12,dotstyle=square*](0.58875,-1.115625)
\psdots[dotsize=0.12,dotstyle=square*](1.58875,-1.975625)
\psdots[dotsize=0.12,dotstyle=square*](3.06875,-2.075625)
\psdots[dotsize=0.12,dotstyle=square*](4.32875,-1.115625)
\usefont{T1}{ptm}{m}{n}
\rput(4.9021874,0.0946875){\small $\delta_0$}
\usefont{T1}{ptm}{m}{n}
\rput(4.5821873,1.3146875){\small $\delta_1$}
\usefont{T1}{ptm}{m}{n}
\rput(3.4021876,2.2846874){\small $\delta_2$}
\usefont{T1}{ptm}{m}{n}
\rput(1.4421875,2.2646874){\small $\delta_3$}
\usefont{T1}{ptm}{m}{n}
\rput(0.3421875,1.2946875){\small $\delta_4$}
\usefont{T1}{ptm}{m}{n}
\rput(0.02859375,0.064375){\small $\delta_5$}
\usefont{T1}{ptm}{m}{n}
\rput(0.3221875,-1.1753125){\small $\delta_6$}
\usefont{T1}{ptm}{m}{n}
\rput(1.4221875,-2.1353126){\small $\delta_7$}
\usefont{T1}{ptm}{m}{n}
\rput(3.1521875,-2.2853125){\small $\delta_8$}
\usefont{T1}{ptm}{m}{n}
\rput(4.5121873,-1.2253125){\small $\delta_9$}
\psdots[dotsize=0.12](4.00875,0.024375)
\psdots[dotsize=0.12](2.80875,0.024375)
\psdots[dotsize=0.12](0.88875,0.004375)
\psdots[dotsize=0.12](1.44875,-0.315625)
\psbezier[linewidth=0.03,linestyle=dashed,dash=0.16cm 0.16cm,arrowsize=0.12cm 2.0,arrowlength=1.4,arrowinset=0.5]{->}(1.44875,-0.315625)(1.40875,-0.555625)(1.30875,-0.815625)(1.18875,-0.855625)(1.06875,-0.895625)(0.98875,-1.035625)(0.58875,-1.115625)
\psbezier[linewidth=0.03,linestyle=dashed,dash=0.16cm 0.16cm,arrowsize=0.12cm 2.0,arrowlength=1.4,arrowinset=0.5]{>-}(0.26875,0.004375)(0.72875,0.004375)(0.46875,0.004375)(0.86875,0.024375)
\psbezier[linewidth=0.03,linestyle=dashed,dash=0.16cm 0.16cm,arrowsize=0.12cm 2.0,arrowlength=1.4,arrowinset=0.5]{<-<}(0.60875,1.104375)(1.04875,0.824375)(1.809771,0.33213678)(1.92875,-0.095625)(2.047729,-0.5233868)(1.92875,-1.375625)(1.58875,-1.975625)
\psbezier[linewidth=0.03,linestyle=dashed,dash=0.16cm 0.16cm,arrowsize=0.12cm 2.0,arrowlength=1.4,arrowinset=0.5]{>-}(1.60875,2.024375)(1.62875,1.724375)(1.84875,1.004375)(1.98875,0.824375)(2.12875,0.644375)(2.50875,0.164375)(2.80875,0.004375)
\psbezier[linewidth=0.03,linestyle=dashed,dash=0.16cm 0.16cm,arrowsize=0.12cm 2.0,arrowlength=1.4,arrowinset=0.5]{->}(2.80875,0.024375)(2.70875,-0.315625)(2.559331,-0.684035)(2.62875,-1.035625)(2.698169,-1.387215)(2.78875,-1.615625)(3.06875,-2.075625)
\psbezier[linewidth=0.03,linestyle=dashed,dash=0.16cm 0.16cm,arrowsize=0.12cm 2.0,arrowlength=1.4,arrowinset=0.5]{>-}(4.32875,-1.115625)(3.96875,-1.075625)(3.72875,-0.995625)(3.48875,-0.855625)(3.24875,-0.715625)(2.96875,-0.295625)(2.76875,0.024375)
\psbezier[linewidth=0.03,linestyle=dashed,dash=0.16cm 0.16cm,arrowsize=0.12cm 2.0,arrowlength=1.4,arrowinset=0.5]{<-}(3.26875,2.044375)(3.00875,1.844375)(2.78875,1.324375)(2.76875,1.084375)(2.74875,0.844375)(2.78875,0.504375)(2.76875,0.064375)
\psbezier[linewidth=0.03,linestyle=dashed,dash=0.16cm 0.16cm,arrowsize=0.12cm 2.0,arrowlength=1.4,arrowinset=0.5]{-<}(2.76875,0.044375)(2.96875,0.344375)(3.14875,0.524375)(3.34875,0.724375)(3.54875,0.924375)(3.70875,0.984375)(4.34875,1.124375)
\psbezier[linewidth=0.03,linestyle=dashed,dash=0.16cm 0.16cm,arrowsize=0.12cm 2.0,arrowlength=1.4,arrowinset=0.5]{->}(3.96875,0.024375)(4.6275,-0.015625)(4.24875,0.004375)(4.66875,0.004375)
\end{pspicture}
}
}
\caption{Example of separatrix graphs}
\label{exsepgr}
\end{figure}

We can finally give simple definitions of the two equivalence relations necessary for our enumeration.

\begin{defreleq}
Let $P$, $Q$ be two monic, centered polynomials of degree $d$, and $\Gamma_P$, $\Gamma_Q$ be their respective separatrix graphs. We say that the two polynomial vector fields $\xi_P$ and $\xi_Q$ are equivalent, denoted by $P \sim Q$, if there exists an isotopy $h: \overline{\mathbb{D}} \times [0,1] \rightarrow \overline{\mathbb{D}}$ that sends the separatrices of $\Gamma_P$ to the separatrices of $\Gamma_Q$ and such that $h_{|S^1 \times [0,1]} = id$.
\end{defreleq}

\begin{defreleqtop}
Let $P$, $Q$ be two monic, centered polynomials of degree $d$, and $\Gamma_P$, $\Gamma_Q$ be their respective separatrix graphs. We say that the two polynomial vector fields $\xi_P$ and $\xi_Q$ are topologically equivalent, denoted by $P \sim_{top} Q$, if there exists an isotopy $h: \overline{\mathbb{D}} \times [0,1] \rightarrow \overline{\mathbb{D}}$ that sends the separatrices of $\Gamma_P$ to the separatrices of $\Gamma_Q$ (cf Figure \ref{extopbrarep} for an example).
\end{defreleqtop}

\section{Combinatorial models}

In this section, we introduce various combinatorial models which will enable us to enumerate the equivalence classes of the complex polynomial vector fields. First models studied in this section are similar to those already present in the articles \cite{DES} and \cite{Dias2}. The interested reader can refer to these articles for more details.

\subsection{From separatrix graph $\Gamma_P$ to transversal graph $\Sigma_P$.}

Let $P$ be a monic centered polynomial of degree $d$, and $\Gamma_P$ be its separatrix graph. Then using the different known informations about the connected components of $\overline{\mathbb{D}} \setminus \Gamma_P$ given in the previous section, we can construct a new combinatory data set. For this, we distinguish two important objects:

\begin{enumerate}
\item[1.] The homoclinic separatrices. These particular solutions of our polynomial differential equation link two points $\delta_i$ and $\delta_j$ at infinity (such a solution is usually denoted by $s_{i,j}$), and so link boards $\tilde{E}_i$ and $\tilde{E}_j$.
\item[2.] The $\alpha \omega$-zone. Such a zone is equivalent to a band, having exactly one even boards $\tilde{E}_i$ and one odd boards $\tilde{E}_j$. So we can trace a curve, contained in the zone, between $E_i$ and $E_j$ such that it does not cross any separatrices and crosses the trajectories of $\xi_P$ at a constant, non-zero angle. This curve is called transversal and denoted by $T_{i,j}$. Notice that the transversal $T_{i,j}$ pairs the boards $\tilde{E}_i$ and $\tilde{E}_j$.
\end{enumerate}

\vspace{2mm}

So starting from a separatrix graph $\Gamma_P$, we construct a new graph on the unit disk, called the transversal graph and denoted by $\Sigma_P$, defined by the union of the transversals and the homoclinic separatrices coming from $\Gamma_P$. Note that this graph is constructed by pairing even board $\tilde{E}_i$ with odd board $\tilde{E}_j$. Nevertheless, some boards are not paired with others; these boards are those derived from sepal zones. See Figure \ref{exsepgraph} for some examples.

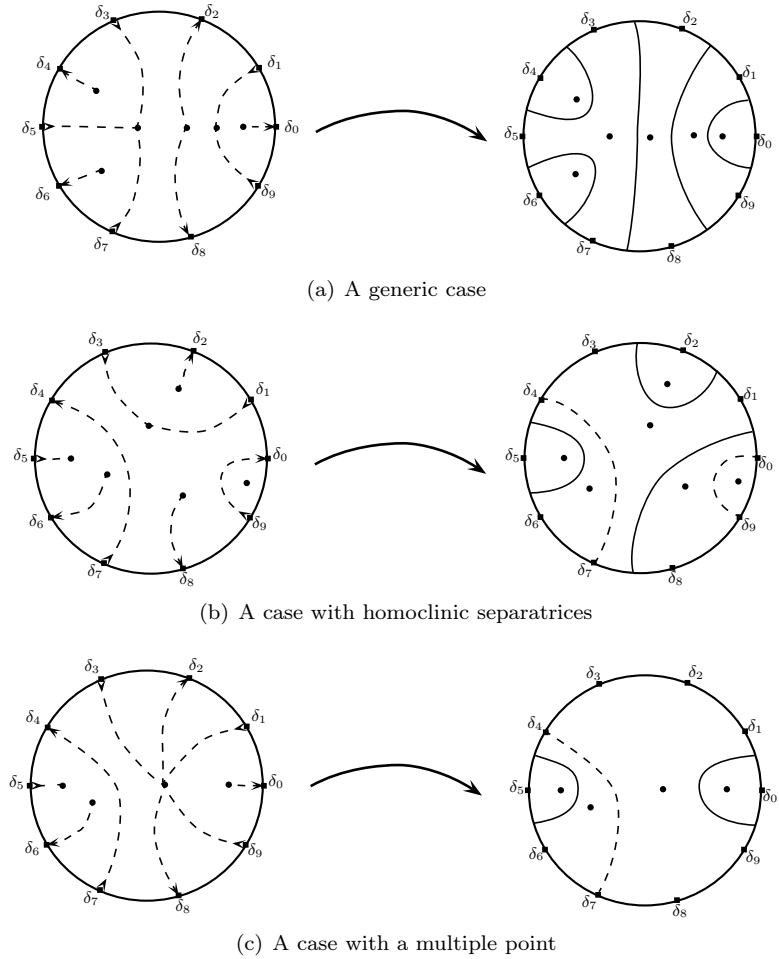
\begin{figure}[ht]
\centering
\subfigure[hang,small][A generic case]{
\scalebox{0.7}{
\begin{pspicture}(0,-2.5103126)(14.119687,2.5103126)
\pscircle[linewidth=0.04,dimen=outer](2.6021874,0.0946875){2.2}
\psdots[dotsize=0.12,dotstyle=square*](4.8021874,0.0946875)
\psdots[dotsize=0.12,dotstyle=square*](0.4021875,0.0946875)
\psdots[dotsize=0.12,dotstyle=square*](3.4021876,2.1346874)
\psdots[dotsize=0.12,dotstyle=square*](1.7421875,2.1146874)
\psdots[dotsize=0.12,dotstyle=square*](4.4821873,1.2146875)
\psdots[dotsize=0.12,dotstyle=square*](0.7421875,1.1946875)
\psdots[dotsize=0.12,dotstyle=square*](0.7221875,-1.0253125)
\psdots[dotsize=0.12,dotstyle=square*](1.7221875,-1.8853126)
\psdots[dotsize=0.12,dotstyle=square*](3.2021875,-1.9853125)
\psdots[dotsize=0.12,dotstyle=square*](4.4621873,-1.0253125)
\psdots[dotsize=0.12](1.4221874,0.7746875)
\psdots[dotsize=0.12](1.5221875,-0.7453125)
\psdots[dotsize=0.12](2.2021875,0.0746875)
\psdots[dotsize=0.12](3.1221876,0.0746875)
\psdots[dotsize=0.12](4.1821876,0.0946875)
\psdots[dotsize=0.12](3.6821876,0.0746875)
\psbezier[linewidth=0.03,linestyle=dashed,dash=0.16cm 0.16cm,arrowsize=0.12cm 2.0,arrowlength=1.4,arrowinset=0.5]{->}(4.1821876,0.0946875)(4.7421875,0.0946875)(4.3421874,0.0946875)(4.8021874,0.0946875)
\psbezier[linewidth=0.03,linestyle=dashed,dash=0.16cm 0.16cm,arrowsize=0.12cm 2.0,arrowlength=1.4,arrowinset=0.5]{-<}(3.6621876,0.0746875)(3.6821876,-0.2653125)(3.7821875,-0.4253125)(3.9221876,-0.5853125)(4.0621877,-0.7453125)(4.1421876,-0.8053125)(4.4421873,-1.0253125)
\psbezier[linewidth=0.03,linestyle=dashed,dash=0.16cm 0.16cm,arrowsize=0.12cm 2.0,arrowlength=1.4,arrowinset=0.5]{-<}(3.6621876,0.0946875)(3.7221875,0.4346875)(3.7221875,0.6546875)(3.9421875,0.8546875)(4.1621876,1.0546875)(4.2421875,1.0946875)(4.4821873,1.2146875)
\psbezier[linewidth=0.03,linestyle=dashed,dash=0.16cm 0.16cm,arrowsize=0.12cm 2.0,arrowlength=1.4,arrowinset=0.5]{->}(3.1021874,0.0746875)(2.9621875,-0.2853125)(2.9021876,-0.5053125)(2.9021876,-0.8453125)(2.9021876,-1.1853125)(3.0021875,-1.6653125)(3.1621876,-1.9653125)
\psbezier[linewidth=0.03,linestyle=dashed,dash=0.16cm 0.16cm,arrowsize=0.12cm 2.0,arrowlength=1.4,arrowinset=0.5]{->}(3.1221876,0.0546875)(2.9821875,0.6346875)(2.955107,0.825563)(3.0021875,1.1546875)(3.049268,1.483812)(3.1421876,1.7346874)(3.4021876,2.1346874)
\psbezier[linewidth=0.03,linestyle=dashed,dash=0.16cm 0.16cm,arrowsize=0.12cm 2.0,arrowlength=1.4,arrowinset=0.5]{-<}(2.1821876,0.0946875)(2.3021874,-0.4853125)(2.2277858,-0.7910921)(2.1621876,-1.1253124)(2.096589,-1.4595329)(1.9821875,-1.6053125)(1.7221875,-1.8853126)
\psbezier[linewidth=0.03,linestyle=dashed,dash=0.16cm 0.16cm,arrowsize=0.12cm 2.0,arrowlength=1.4,arrowinset=0.5]{-<}(2.1821876,0.0946875)(2.3221874,0.4146875)(2.3221874,1.0146875)(2.2621875,1.2546875)(2.2021875,1.4946876)(1.9221874,1.8746876)(1.7421875,2.1146874)
\psbezier[linewidth=0.03,linestyle=dashed,dash=0.16cm 0.16cm,arrowsize=0.12cm 2.0,arrowlength=1.4,arrowinset=0.5]{->}(1.4221874,0.7946875)(1.2221875,0.8946875)(1.0221875,0.9746875)(0.7421875,1.1946875)
\psbezier[linewidth=0.03,linestyle=dashed,dash=0.16cm 0.16cm,arrowsize=0.12cm 2.0,arrowlength=1.4,arrowinset=0.5]{->}(1.5021875,-0.7253125)(1.2021875,-0.8453125)(1.0421875,-0.9053125)(0.7221875,-1.0253125)
\psbezier[linewidth=0.03,linestyle=dashed,dash=0.16cm 0.16cm,arrowsize=0.12cm 2.0,arrowlength=1.4,arrowinset=0.5]{-<}(2.1821876,0.0746875)(1.6821876,0.0746875)(0.9621875,0.1146875)(0.4021875,0.0946875)
\usefont{T1}{ptm}{m}{n}
\rput(5.1021874,0.0946875){\small $\delta_0$}
\usefont{T1}{ptm}{m}{n}
\rput(4.7821873,1.3146875){\small $\delta_1$}
\usefont{T1}{ptm}{m}{n}
\rput(3.6021876,2.2846874){\small $\delta_2$}
\usefont{T1}{ptm}{m}{n}
\rput(1.5421875,2.2646874){\small $\delta_3$}
\usefont{T1}{ptm}{m}{n}
\rput(0.4421875,1.2946875){\small $\delta_4$}
\usefont{T1}{ptm}{m}{n}
\rput(0.16859375,0.064375){\small $\delta_5$}
\usefont{T1}{ptm}{m}{n}
\rput(0.4221875,-1.1753125){\small $\delta_6$}
\usefont{T1}{ptm}{m}{n}
\rput(1.5221875,-2.1353126){\small $\delta_7$}
\usefont{T1}{ptm}{m}{n}
\rput(3.3521875,-2.2353125){\small $\delta_8$}
\usefont{T1}{ptm}{m}{n}
\rput(4.6621873,-1.1253125){\small $\delta_9$}
\psbezier[linewidth=0.05,arrowsize=0.05291667cm 4.0,arrowlength=1.4,arrowinset=0.4]{->}(5.54875,-0.005625)(6.14875,0.334375)(6.84875,0.394375)(7.20875,0.394375)(7.56875,0.394375)(8.14875,0.234375)(8.76875,-0.185625)
\pscircle[linewidth=0.04,dimen=outer](11.62875,-0.085625){2.2}
\psdots[dotsize=0.12,dotstyle=square*](13.82875,-0.085625)
\psdots[dotsize=0.12,dotstyle=square*](9.42875,-0.085625)
\psdots[dotsize=0.12,dotstyle=square*](12.42875,1.954375)
\psdots[dotsize=0.12,dotstyle=square*](10.76875,1.934375)
\psdots[dotsize=0.12,dotstyle=square*](13.50875,1.034375)
\psdots[dotsize=0.12,dotstyle=square*](9.76875,1.014375)
\psdots[dotsize=0.12,dotstyle=square*](9.74875,-1.205625)
\psdots[dotsize=0.12,dotstyle=square*](10.74875,-2.065625)
\psdots[dotsize=0.12,dotstyle=square*](12.22875,-2.165625)
\psdots[dotsize=0.12,dotstyle=square*](13.48875,-1.205625)
\usefont{T1}{ptm}{m}{n}
\rput(13.988594,-0.185625){\small $\delta_0$}
\usefont{T1}{ptm}{m}{n}
\rput(13.708593,1.134375){\small $\delta_1$}
\usefont{T1}{ptm}{m}{n}
\rput(12.5885935,2.154375){\small $\delta_2$}
\usefont{T1}{ptm}{m}{n}
\rput(10.668593,2.134375){\small $\delta_3$}
\usefont{T1}{ptm}{m}{n}
\rput(9.568594,1.194375){\small $\delta_4$}
\usefont{T1}{ptm}{m}{n}
\rput(9.228594,-0.025625){\small $\delta_5$}
\usefont{T1}{ptm}{m}{n}
\rput(9.5885935,-1.285625){\small $\delta_6$}
\usefont{T1}{ptm}{m}{n}
\rput(10.5885935,-2.205625){\small $\delta_7$}
\usefont{T1}{ptm}{m}{n}
\rput(12.288593,-2.365625){\small $\delta_8$}
\usefont{T1}{ptm}{m}{n}
\rput(13.668593,-1.345625){\small $\delta_9$}
\psdots[dotsize=0.12](10.44875,0.614375)
\psdots[dotsize=0.12](11.06875,-0.085625)
\psdots[dotsize=0.12](10.42875,-0.805625)
\psdots[dotsize=0.12](11.82875,-0.105625)
\psdots[dotsize=0.12](12.64875,-0.065625)
\psdots[dotsize=0.12](13.18875,-0.085625)
\psbezier[linewidth=0.03](10.24875,1.614375)(10.54875,1.394375)(10.884085,0.7541802)(10.68875,0.414375)(10.493416,0.07456978)(9.84875,0.294375)(9.50875,0.414375)
\psbezier[linewidth=0.03](9.52875,-0.685625)(10.04875,-0.425625)(10.58875,-0.305625)(10.74875,-0.545625)(10.90875,-0.785625)(10.68875,-1.405625)(10.22875,-1.745625)
\psbezier[linewidth=0.03](11.38875,-2.265625)(11.54875,-1.545625)(11.596673,-0.3251104)(11.58875,-0.045625)(11.580827,0.2338604)(11.74875,1.534375)(11.52875,2.094375)
\psbezier[linewidth=0.03](12.96875,1.634375)(12.60875,1.214375)(12.22875,0.374375)(12.22875,-0.105625)(12.22875,-0.585625)(12.46875,-1.305625)(12.88875,-1.845625)
\psbezier[linewidth=0.03](13.70875,-0.685625)(13.28875,-0.665625)(12.911936,-0.36469206)(12.90875,-0.085625)(12.905564,0.19344206)(13.16875,0.574375)(13.66875,0.594375)
\end{pspicture} 
}
} \\
\subfigure[hang,small][A case with homoclinic separatrices]{
\scalebox{0.7}{
\begin{pspicture}(0,-2.4303124)(14.159687,2.4303124)
\pscircle[linewidth=0.04,dimen=outer](2.46875,0.004375){2.2}
\psdots[dotsize=0.12,dotstyle=square*](4.66875,0.004375)
\psdots[dotsize=0.12,dotstyle=square*](0.26875,0.004375)
\psdots[dotsize=0.12,dotstyle=square*](3.26875,2.044375)
\psdots[dotsize=0.12,dotstyle=square*](1.60875,2.024375)
\psdots[dotsize=0.12,dotstyle=square*](4.34875,1.124375)
\psdots[dotsize=0.12,dotstyle=square*](0.60875,1.104375)
\psdots[dotsize=0.12,dotstyle=square*](0.58875,-1.115625)
\psdots[dotsize=0.12,dotstyle=square*](1.58875,-1.975625)
\psdots[dotsize=0.12,dotstyle=square*](3.06875,-2.075625)
\psdots[dotsize=0.12,dotstyle=square*](4.32875,-1.115625)
\usefont{T1}{ptm}{m}{n}
\rput(4.9021874,0.0946875){\small $\delta_0$}
\usefont{T1}{ptm}{m}{n}
\rput(4.5821873,1.3146875){\small $\delta_1$}
\usefont{T1}{ptm}{m}{n}
\rput(3.4021876,2.2846874){\small $\delta_2$}
\usefont{T1}{ptm}{m}{n}
\rput(1.4421875,2.2646874){\small $\delta_3$}
\usefont{T1}{ptm}{m}{n}
\rput(0.3421875,1.2946875){\small $\delta_4$}
\usefont{T1}{ptm}{m}{n}
\rput(0.02859375,0.064375){\small $\delta_5$}
\usefont{T1}{ptm}{m}{n}
\rput(0.3221875,-1.1753125){\small $\delta_6$}
\usefont{T1}{ptm}{m}{n}
\rput(1.4221875,-2.1353126){\small $\delta_7$}
\usefont{T1}{ptm}{m}{n}
\rput(3.1521875,-2.2853125){\small $\delta_8$}
\usefont{T1}{ptm}{m}{n}
\rput(4.5121873,-1.2253125){\small $\delta_9$}
\psdots[dotsize=0.12](2.98875,1.324375)
\psdots[dotsize=0.12](2.42875,0.624375)
\psdots[dotsize=0.12](0.96875,0.004375)
\psdots[dotsize=0.12](1.64875,-0.295625)
\psdots[dotsize=0.12](3.06875,-0.695625)
\psdots[dotsize=0.12](4.26875,-0.455625)
\psbezier[linewidth=0.03,linestyle=dashed,dash=0.16cm 0.16cm,arrowsize=0.12cm 2.0,arrowlength=1.4,arrowinset=0.5]{->}(2.98875,1.324375)(3.22875,1.924375)(2.94875,1.304375)(3.26875,2.044375)
\psbezier[linewidth=0.03,linestyle=dashed,dash=0.16cm 0.16cm,arrowsize=0.12cm 2.0,arrowlength=1.4,arrowinset=0.5]{-<}(2.40875,0.644375)(2.20875,0.784375)(1.94875,1.044375)(1.82875,1.204375)(1.70875,1.364375)(1.60875,1.684375)(1.60875,2.024375)
\psbezier[linewidth=0.03,linestyle=dashed,dash=0.16cm 0.16cm,arrowsize=0.12cm 2.0,arrowlength=1.4,arrowinset=0.5]{-<}(0.96875,0.004375)(0.44875,0.004375)(0.86875,-0.015625)(0.26875,0.004375)
\psbezier[linewidth=0.03,linestyle=dashed,dash=0.16cm 0.16cm,arrowsize=0.12cm 2.0,arrowlength=1.4,arrowinset=0.5]{->}(1.62875,-0.275625)(1.56875,-0.575625)(1.46875,-0.755625)(1.36875,-0.875625)(1.26875,-0.995625)(1.12875,-1.015625)(0.58875,-1.115625)
\psbezier[linewidth=0.03,linestyle=dashed,dash=0.16cm 0.16cm,arrowsize=0.12cm 2.0,arrowlength=1.4,arrowinset=0.5]{<-<}(0.60875,1.104375)(1.32875,0.864375)(1.76875,0.544375)(1.98875,-0.075625)(2.20875,-0.695625)(1.98875,-1.575625)(1.58875,-1.975625)
\psbezier[linewidth=0.03,linestyle=dashed,dash=0.16cm 0.16cm,arrowsize=0.12cm 2.0,arrowlength=1.4,arrowinset=0.5]{-<}(2.42875,0.644375)(2.88875,0.504375)(3.248423,0.48699465)(3.44875,0.524375)(3.649077,0.56175536)(4.06875,0.824375)(4.34875,1.124375)
\psbezier[linewidth=0.03,linestyle=dashed,dash=0.16cm 0.16cm,arrowsize=0.12cm 2.0,arrowlength=1.4,arrowinset=0.5]{<-<}(4.66875,0.004375)(3.90875,0.004375)(3.8783169,-0.13259)(3.80875,-0.275625)(3.739183,-0.41866)(3.78875,-0.815625)(4.32875,-1.115625)
\psbezier[linewidth=0.03,linestyle=dashed,dash=0.16cm 0.16cm,arrowsize=0.12cm 2.0,arrowlength=1.4,arrowinset=0.5]{->}(3.04875,-0.715625)(2.82875,-1.055625)(2.8627622,-1.0783677)(2.84875,-1.275625)(2.8347378,-1.4728824)(2.82875,-1.575625)(3.06875,-2.075625)
\psbezier[linewidth=0.05,arrowsize=0.05291667cm 4.0,arrowlength=1.4,arrowinset=0.4]{->}(5.54875,-0.105625)(6.14875,0.234375)(6.84875,0.294375)(7.20875,0.294375)(7.56875,0.294375)(8.14875,0.134375)(8.76875,-0.285625)
\pscircle[linewidth=0.04,dimen=outer](11.66875,0.014375){2.2}
\psdots[dotsize=0.12,dotstyle=square*](13.86875,0.014375)
\psdots[dotsize=0.12,dotstyle=square*](9.46875,0.014375)
\psdots[dotsize=0.12,dotstyle=square*](12.46875,2.054375)
\psdots[dotsize=0.12,dotstyle=square*](10.80875,2.034375)
\psdots[dotsize=0.12,dotstyle=square*](13.54875,1.134375)
\psdots[dotsize=0.12,dotstyle=square*](9.80875,1.114375)
\psdots[dotsize=0.12,dotstyle=square*](9.78875,-1.105625)
\psdots[dotsize=0.12,dotstyle=square*](10.78875,-1.965625)
\psdots[dotsize=0.12,dotstyle=square*](12.26875,-2.065625)
\psdots[dotsize=0.12,dotstyle=square*](13.52875,-1.105625)
\usefont{T1}{ptm}{m}{n}
\rput(14.028594,-0.085625){\small $\delta_0$}
\usefont{T1}{ptm}{m}{n}
\rput(13.748593,1.234375){\small $\delta_1$}
\usefont{T1}{ptm}{m}{n}
\rput(12.628593,2.254375){\small $\delta_2$}
\usefont{T1}{ptm}{m}{n}
\rput(10.708593,2.234375){\small $\delta_3$}
\usefont{T1}{ptm}{m}{n}
\rput(9.608594,1.294375){\small $\delta_4$}
\usefont{T1}{ptm}{m}{n}
\rput(9.268594,0.074375){\small $\delta_5$}
\usefont{T1}{ptm}{m}{n}
\rput(9.628593,-1.185625){\small $\delta_6$}
\usefont{T1}{ptm}{m}{n}
\rput(10.628593,-2.105625){\small $\delta_7$}
\usefont{T1}{ptm}{m}{n}
\rput(12.328594,-2.265625){\small $\delta_8$}
\usefont{T1}{ptm}{m}{n}
\rput(13.708593,-1.245625){\small $\delta_9$}
\psdots[dotsize=0.12](10.22875,0.014375)
\psdots[dotsize=0.12](10.70875,-0.565625)
\psdots[dotsize=0.12](13.50875,-0.425625)
\psdots[dotsize=0.12](12.50875,-0.525625)
\psdots[dotsize=0.12](11.84875,0.634375)
\psdots[dotsize=0.12](12.16875,1.414375)
\psbezier[linewidth=0.03,linestyle=dashed,dash=0.16cm 0.16cm](9.84875,1.134375)(10.34875,1.114375)(10.98875,0.434375)(11.14875,-0.065625)(11.30875,-0.565625)(11.12875,-1.425625)(10.82875,-1.945625)
\psbezier[linewidth=0.03,linestyle=dashed,dash=0.16cm 0.16cm](13.52875,-1.085625)(13.12875,-0.965625)(12.96875,-0.565625)(13.06875,-0.325625)(13.16875,-0.085625)(13.30875,0.114375)(13.84875,0.034375)
\psbezier[linewidth=0.03](9.58875,0.694375)(10.12875,0.594375)(10.58875,0.414375)(10.60875,0.034375)(10.62875,-0.345625)(10.28875,-0.625625)(9.60875,-0.645625)
\psbezier[linewidth=0.03](11.60875,2.194375)(11.52875,1.794375)(11.64875,1.174375)(12.00875,1.014375)(12.36875,0.854375)(12.92875,1.154375)(13.10875,1.674375)
\psbezier[linewidth=0.03](11.52875,-2.145625)(11.40875,-1.625625)(11.72875,-0.625625)(12.12875,-0.265625)(12.52875,0.094375)(13.14875,0.394375)(13.76875,0.514375)
\end{pspicture} 
}
} \\
\subfigure[hang,small][A case with a multiple point]{
\scalebox{0.7}{
\begin{pspicture}(0,-2.5103126)(14.319688,2.5103126)
\pscircle[linewidth=0.04,dimen=outer](2.46875,0.004375){2.2}
\psdots[dotsize=0.12,dotstyle=square*](4.66875,0.004375)
\psdots[dotsize=0.12,dotstyle=square*](0.26875,0.004375)
\psdots[dotsize=0.12,dotstyle=square*](3.26875,2.044375)
\psdots[dotsize=0.12,dotstyle=square*](1.60875,2.024375)
\psdots[dotsize=0.12,dotstyle=square*](4.34875,1.124375)
\psdots[dotsize=0.12,dotstyle=square*](0.60875,1.104375)
\psdots[dotsize=0.12,dotstyle=square*](0.58875,-1.115625)
\psdots[dotsize=0.12,dotstyle=square*](1.58875,-1.975625)
\psdots[dotsize=0.12,dotstyle=square*](3.06875,-2.075625)
\psdots[dotsize=0.12,dotstyle=square*](4.32875,-1.115625)
\usefont{T1}{ptm}{m}{n}
\rput(4.9021874,0.0946875){\small $\delta_0$}
\usefont{T1}{ptm}{m}{n}
\rput(4.5821873,1.3146875){\small $\delta_1$}
\usefont{T1}{ptm}{m}{n}
\rput(3.4021876,2.2846874){\small $\delta_2$}
\usefont{T1}{ptm}{m}{n}
\rput(1.4421875,2.2646874){\small $\delta_3$}
\usefont{T1}{ptm}{m}{n}
\rput(0.3421875,1.2946875){\small $\delta_4$}
\usefont{T1}{ptm}{m}{n}
\rput(0.02859375,0.064375){\small $\delta_5$}
\usefont{T1}{ptm}{m}{n}
\rput(0.3221875,-1.1753125){\small $\delta_6$}
\usefont{T1}{ptm}{m}{n}
\rput(1.4221875,-2.1353126){\small $\delta_7$}
\usefont{T1}{ptm}{m}{n}
\rput(3.1521875,-2.2853125){\small $\delta_8$}
\usefont{T1}{ptm}{m}{n}
\rput(4.5121873,-1.2253125){\small $\delta_9$}
\psdots[dotsize=0.12](4.00875,0.024375)
\psdots[dotsize=0.12](2.80875,0.024375)
\psdots[dotsize=0.12](0.88875,0.004375)
\psdots[dotsize=0.12](1.44875,-0.315625)
\psbezier[linewidth=0.03,linestyle=dashed,dash=0.16cm 0.16cm,arrowsize=0.12cm 2.0,arrowlength=1.4,arrowinset=0.5]{->}(1.44875,-0.315625)(1.40875,-0.555625)(1.30875,-0.815625)(1.18875,-0.855625)(1.06875,-0.895625)(0.98875,-1.035625)(0.58875,-1.115625)
\psbezier[linewidth=0.03,linestyle=dashed,dash=0.16cm 0.16cm,arrowsize=0.12cm 2.0,arrowlength=1.4,arrowinset=0.5]{>-}(0.26875,0.004375)(0.72875,0.004375)(0.46875,0.004375)(0.86875,0.024375)
\psbezier[linewidth=0.03,linestyle=dashed,dash=0.16cm 0.16cm,arrowsize=0.12cm 2.0,arrowlength=1.4,arrowinset=0.5]{<-<}(0.60875,1.104375)(1.04875,0.824375)(1.809771,0.33213678)(1.92875,-0.095625)(2.047729,-0.5233868)(1.92875,-1.375625)(1.58875,-1.975625)
\psbezier[linewidth=0.03,linestyle=dashed,dash=0.16cm 0.16cm,arrowsize=0.12cm 2.0,arrowlength=1.4,arrowinset=0.5]{>-}(1.60875,2.024375)(1.62875,1.724375)(1.84875,1.004375)(1.98875,0.824375)(2.12875,0.644375)(2.50875,0.164375)(2.80875,0.004375)
\psbezier[linewidth=0.03,linestyle=dashed,dash=0.16cm 0.16cm,arrowsize=0.12cm 2.0,arrowlength=1.4,arrowinset=0.5]{->}(2.80875,0.024375)(2.70875,-0.315625)(2.559331,-0.684035)(2.62875,-1.035625)(2.698169,-1.387215)(2.78875,-1.615625)(3.06875,-2.075625)
\psbezier[linewidth=0.03,linestyle=dashed,dash=0.16cm 0.16cm,arrowsize=0.12cm 2.0,arrowlength=1.4,arrowinset=0.5]{>-}(4.32875,-1.115625)(3.96875,-1.075625)(3.72875,-0.995625)(3.48875,-0.855625)(3.24875,-0.715625)(2.96875,-0.295625)(2.76875,0.024375)
\psbezier[linewidth=0.03,linestyle=dashed,dash=0.16cm 0.16cm,arrowsize=0.12cm 2.0,arrowlength=1.4,arrowinset=0.5]{<-}(3.26875,2.044375)(3.00875,1.844375)(2.78875,1.324375)(2.76875,1.084375)(2.74875,0.844375)(2.78875,0.504375)(2.76875,0.064375)
\psbezier[linewidth=0.03,linestyle=dashed,dash=0.16cm 0.16cm,arrowsize=0.12cm 2.0,arrowlength=1.4,arrowinset=0.5]{-<}(2.76875,0.044375)(2.96875,0.344375)(3.14875,0.524375)(3.34875,0.724375)(3.54875,0.924375)(3.70875,0.984375)(4.34875,1.124375)
\psbezier[linewidth=0.03,linestyle=dashed,dash=0.16cm 0.16cm,arrowsize=0.12cm 2.0,arrowlength=1.4,arrowinset=0.5]{->}(3.96875,0.024375)(4.6275,-0.015625)(4.24875,0.004375)(4.66875,0.004375)
\pscircle[linewidth=0.04,dimen=outer](11.82875,-0.085625){2.2}
\psdots[dotsize=0.12,dotstyle=square*](14.02875,-0.085625)
\psdots[dotsize=0.12,dotstyle=square*](9.62875,-0.085625)
\psdots[dotsize=0.12,dotstyle=square*](12.62875,1.954375)
\psdots[dotsize=0.12,dotstyle=square*](10.96875,1.934375)
\psdots[dotsize=0.12,dotstyle=square*](13.70875,1.034375)
\psdots[dotsize=0.12,dotstyle=square*](9.96875,1.014375)
\psdots[dotsize=0.12,dotstyle=square*](9.94875,-1.205625)
\psdots[dotsize=0.12,dotstyle=square*](10.94875,-2.065625)
\psdots[dotsize=0.12,dotstyle=square*](12.42875,-2.165625)
\psdots[dotsize=0.12,dotstyle=square*](13.68875,-1.205625)
\usefont{T1}{ptm}{m}{n}
\rput(14.188594,-0.185625){\small $\delta_0$}
\usefont{T1}{ptm}{m}{n}
\rput(13.908594,1.134375){\small $\delta_1$}
\usefont{T1}{ptm}{m}{n}
\rput(12.788593,2.154375){\small $\delta_2$}
\usefont{T1}{ptm}{m}{n}
\rput(10.868594,2.134375){\small $\delta_3$}
\usefont{T1}{ptm}{m}{n}
\rput(9.768594,1.194375){\small $\delta_4$}
\usefont{T1}{ptm}{m}{n}
\rput(9.428594,-0.025625){\small $\delta_5$}
\usefont{T1}{ptm}{m}{n}
\rput(9.788593,-1.285625){\small $\delta_6$}
\usefont{T1}{ptm}{m}{n}
\rput(10.788593,-2.205625){\small $\delta_7$}
\usefont{T1}{ptm}{m}{n}
\rput(12.488594,-2.365625){\small $\delta_8$}
\usefont{T1}{ptm}{m}{n}
\rput(13.868594,-1.345625){\small $\delta_9$}
\psdots[dotsize=0.12](13.36875,-0.065625)
\psdots[dotsize=0.12](12.16875,-0.065625)
\psdots[dotsize=0.12](10.24875,-0.085625)
\psdots[dotsize=0.12](10.80875,-0.405625)
\psbezier[linewidth=0.03,linestyle=dashed,dash=0.16cm 0.16cm](9.98875,1.014375)(10.40875,0.734375)(11.169771,0.24213678)(11.28875,-0.185625)(11.407729,-0.61338675)(11.28875,-1.465625)(10.96875,-2.005625)
\psbezier[linewidth=0.05,arrowsize=0.05291667cm 4.0,arrowlength=1.4,arrowinset=0.4]{->}(5.54875,-0.005625)(6.14875,0.334375)(6.84875,0.394375)(7.20875,0.394375)(7.56875,0.394375)(8.14875,0.234375)(8.76875,-0.185625)
\psbezier[linewidth=0.03](9.76875,0.574375)(10.26875,0.414375)(10.533863,0.3528676)(10.56875,0.014375)(10.603637,-0.3241176)(10.42875,-0.605625)(9.74875,-0.705625)
\psbezier[linewidth=0.03](13.88875,0.574375)(13.42875,0.534375)(12.82875,0.414375)(12.84875,-0.025625)(12.86875,-0.465625)(13.28875,-0.725625)(13.90875,-0.745625)
\end{pspicture}
}
}
\caption{Examples of separatrix graphs and these respective transversal graphs}
\label{exsepgraph}
\end{figure}

\vspace{2mm}

One of the remarkable properties due to the construction of these graphs is that each connected component of the disk bounded by $\Sigma_P$ contains one and only one equilibrium point of the polynomial vector field $\xi_P$. So it is easy to reconstruct the separatrix graph $\Gamma_P$ from the associated transversal graph $\Sigma_P$. In other words, we have a bijection between the set of separatrix graphs and the set of transversal graphs.

Moreover, the reader can see that for $P,Q \in \mathcal{P}_d$, $P \thicksim Q$ if and only if they have the same transversal graph, i.e. $\Sigma_P$ and $\Sigma_Q$ are constructed by pairing the same boards. Similarly, $P \thicksim_{top} Q$ if and only if there exists $l \in \{0, \ldots , 2d-3\}$ such that $\Sigma_Q$ is obtained by a rotation of $\Sigma_P$ of order $l/(2d-2)$ (see Figure \ref{extopbrarep} for an example)

\vspace{2mm}

In summary, in order to count the number of equivalent phase portraits of the polynomial differential equations $\dot{z} = P(z)$, we have to enumerate the number of transversal graphs, i.e. the number of different ways to pair the $2d-2$ boards of $\overline{\mathbb{D}}$. A first enumeration of this problem was given by A.Douady, F.Estrada and P.Sentenac who classified the structurally stable (or generic) case, i.e. the vector fields such that there are neither homoclinic separatrix nor multiple equilibrium point. Later this result was completed, thanks to the introduction of a new combinatorial, by the classification of the global structures of complex polynomial vector fields in $\mathbb{C}$. This generalization is due to a work of B.Branner and K.Dias \cite{Dias1}.

\subsection{From transversal graphs to valid bracketings.}

First let me explain the idea developed by K.Dias in \cite{Dias2}. This idea is to translate the combinatorial data set defined by the transversal graphs into a simpler bracketing problem in order to facilitate the enumeration of general polynomial vector fields. The bracketing problem proposed here is not exactly the same as the one proposed by K.Dias, but it's an equivalent problem. Consider a string of $2d-2$ elements including the integers from $0$ to $2d-3$, and use the following rules:
\begin{enumerate}
\item[1.] for each homoclinic separatrix $s_{i,j}$ of the transversal graph, replace the integers $i$ and $j$ by left and right round parentheses respectively, i.e. $i-1$ $($ $i+1 \ldots j-1$ $)$ $j+1$. These pair of parentheses are called associated.
\item[2.] for each transversal $T_{k,l}$ of the transversal graph, replace the integers $k$ and $l$ by square parentheses, i.e. $k-1$ $[$ $k+1 \ldots l-1$ $]$ $l+1$. These pair of parentheses are called associated.
\item[3.] for each board $\tilde{E}_i$ which is not connected with another one, replace the integer $i$ by a dot, i.e. $i-1$ $\bullet$ $i+1$.
\end{enumerate}
In this way, a transversal graph induces a unique bracketing representation. For example, the respective bracketing representations of the examples $(a)$, $(b)$ and $(c)$ of Figure \ref{exsepgraph} are respectively $[ \, ] \, [ \, [ \, [ \, ] \, [ \, ] \, ] \, ]$, $( \, [ \, [ \, ] \, ( \, [ \, ] \, ) \, ] \, )$ and $[ \, ] \, \bullet \, \bullet \, ( \, [ \, ] \, ) \, \bullet \, \bullet$. Figure \ref{extopbrarep} gives also an example of translation from transversal graphs to valid bracketings. Remark that we can also write a valid bracketing with the elements of the string (as K.Dias in her paper). For example, we can write $[01][23]4(56)7$ instead of $[ \, ] \, [ \, ] \, \bullet \, ( \, ) \, \bullet $.

\vspace{3mm}

Conversely, the bracketing representation must satisfy some properties so that they are in accordance with what can happen for a given polynomial vector field (and so for the associated transversal graph). For that, we need to impose some rules on how to place the elements of a bracketing representation. So consider a bracketing representation contained $2d-2$ elements, it is called a \textbf{valid bracketing} if:

\begin{enumerate}
\item[1.] there are an equal number of right and left round and square parentheses.
\item[2.] the number of left square (resp. round) parentheses must be greater than or equal to the number of right, reading from left to right.
\item[3.] there must be an even number of parentheses and dots between a pair of round (resp. square) associated parentheses.
\item[4.] there must be an equal number of right and left square (resp. round) parentheses between a pair of (round or square) associated parentheses.
\end{enumerate}

So, we create a bijection between the set of transversal graphs and the set of valid bracketings. Notice that the structurally stable case of vector fields (containing neither homoclinic separatrix nor sepal zone) is equivalent by this transformation to the classical bracketing problem whose the enumeration is given by the Catalan numbers \cite{catalan}. 

\vspace{2mm}

Thanks to this combinatorial given first in the work of K.Dias and B.Branner \cite{Dias1}, we can enumerate the equivalent classes defined by the relation $\thicksim$. This enumeration is given in lemma \ref{lemvalbr}, and is proved first by K.Dias in \cite{Dias2}. The main problem of this point of view is the following : this combinatorics depends on the enumeration of the points $\delta_l$ on the unit circle. In fact, Figure \ref{extopbrarep} shows an example of two topologically equivalent transversal graphs (one is obtained by a rotation of the other) having different respective bracketing representations. In other words, the problem here is that it's difficult to identify topologically equivalent polynomials using this combinatorics. In order to solve this difficulty, we will give in the next subsection another equivalent combinatorial data set. Before that, we give a generalization of the concept of valid bracketing defined for an odd number of elements in the string.

\begin{figure}[ht]
\begin{center}
\scalebox{0.75} 
{
\begin{pspicture}(0,-3.775)(13.2109375,3.775)
\rput{-270.0}(12.38125,-9.640625){\pscircle[linewidth=0.04,dimen=outer](11.010938,1.3703125){2.2}}
\psdots[dotsize=0.12,dotangle=-250.0,dotstyle=square*](11.010938,3.5703125)
\psdots[dotsize=0.12,dotangle=-250.0,dotstyle=square*](11.010938,-0.8296875)
\psdots[dotsize=0.12,dotangle=-250.0,dotstyle=square*](8.970938,2.1703124)
\psdots[dotsize=0.12,dotangle=-250.0,dotstyle=square*](8.990937,0.5103125)
\psdots[dotsize=0.12,dotangle=-250.0,dotstyle=square*](9.890938,3.2503126)
\psdots[dotsize=0.12,dotangle=-250.0,dotstyle=square*](9.910937,-0.4896875)
\psdots[dotsize=0.12,dotangle=-250.0,dotstyle=square*](12.130938,-0.5096875)
\psdots[dotsize=0.12,dotangle=-250.0,dotstyle=square*](12.990937,0.4903125)
\psdots[dotsize=0.12,dotangle=-250.0,dotstyle=square*](13.090938,1.9703125)
\psdots[dotsize=0.12,dotangle=-250.0,dotstyle=square*](12.130938,3.2303126)
\usefont{T1}{ptm}{m}{n}
\rput(11.010938,3.8203125){\small $\delta_3$}
\usefont{T1}{ptm}{m}{n}
\rput(11.010938,-1.1296875){\small $\delta_8$}
\usefont{T1}{ptm}{m}{n}
\rput(8.670938,2.3703124){\small $\delta_5$}
\usefont{T1}{ptm}{m}{n}
\rput(8.640937,0.5603125){\small $\delta_6$}
\usefont{T1}{ptm}{m}{n}
\rput(9.690938,3.5503126){\small $\delta_4$}
\usefont{T1}{ptm}{m}{n}
\rput(9.710937,-0.6896875){\small $\delta_7$}
\usefont{T1}{ptm}{m}{n}
\rput(12.400938,-0.7096875){\small $\delta_9$}
\usefont{T1}{ptm}{m}{n}
\rput(13.290937,0.4903125){\small $\delta_0$}
\usefont{T1}{ptm}{m}{n}
\rput(13.390938,2.1203125){\small $\delta_1$}
\usefont{T1}{ptm}{m}{n}
\rput(12.330938,3.4803126){\small $\delta_2$}
\psdots[dotsize=0.12,dotangle=-250.0](11.010938,-0.0696875)
\psdots[dotsize=0.12,dotangle=-250.0](11.590938,0.4103125)
\psdots[dotsize=0.12,dotangle=-250.0](11.450937,3.2103126)
\psdots[dotsize=0.12,dotangle=-250.0](11.550938,2.2103126)
\psdots[dotsize=0.12,dotangle=-250.0](10.390938,1.5503125)
\psdots[dotsize=0.12,dotangle=-250.0](9.610937,1.8703125)
\psbezier[linewidth=0.03,linestyle=dashed,dash=0.16cm 0.16cm](9.890938,-0.4496875)(9.910937,0.0503125)(10.590938,0.6903125)(11.090938,0.8503125)(11.590938,1.0103126)(12.450937,0.8303125)(12.970938,0.5303125)
\psbezier[linewidth=0.03,linestyle=dashed,dash=0.16cm 0.16cm](12.110937,3.2303126)(11.990937,2.8303125)(11.590938,2.6703124)(11.350938,2.7703125)(11.110937,2.8703125)(10.910937,3.0103126)(10.990937,3.5503125)
\psbezier[linewidth=0.03](10.330937,-0.7096875)(10.430938,-0.1696875)(10.610937,0.2903125)(10.990937,0.3103125)(11.370937,0.3303125)(11.650937,-0.0096875)(11.670938,-0.6896875)
\psbezier[linewidth=0.03](8.830937,1.3103125)(9.230938,1.2303125)(9.850938,1.3503125)(10.010938,1.7103125)(10.170938,2.0703125)(9.870937,2.6303124)(9.350938,2.8103125)
\psbezier[linewidth=0.03](13.170938,1.2303125)(12.650937,1.1103125)(11.650937,1.4303125)(11.290937,1.8303125)(10.930938,2.2303126)(10.630938,2.8503125)(10.510938,3.4703126)
\pscircle[linewidth=0.04,dimen=outer](2.46875,1.3590626){2.2}
\psdots[dotsize=0.12,dotstyle=square*](4.66875,1.3590626)
\psdots[dotsize=0.12,dotstyle=square*](0.26875,1.3590626)
\psdots[dotsize=0.12,dotstyle=square*](3.26875,3.3990624)
\psdots[dotsize=0.12,dotstyle=square*](1.60875,3.3790624)
\psdots[dotsize=0.12,dotstyle=square*](4.34875,2.4790626)
\psdots[dotsize=0.12,dotstyle=square*](0.60875,2.4590626)
\psdots[dotsize=0.12,dotstyle=square*](0.58875,0.2390625)
\psdots[dotsize=0.12,dotstyle=square*](1.58875,-0.6209375)
\psdots[dotsize=0.12,dotstyle=square*](3.06875,-0.7209375)
\psdots[dotsize=0.12,dotstyle=square*](4.32875,0.2390625)
\usefont{T1}{ptm}{m}{n}
\rput(4.8785937,1.2590625){\small $\delta_0$}
\usefont{T1}{ptm}{m}{n}
\rput(4.5485935,2.5790625){\small $\delta_1$}
\usefont{T1}{ptm}{m}{n}
\rput(3.4285936,3.5990624){\small $\delta_2$}
\usefont{T1}{ptm}{m}{n}
\rput(1.5085938,3.5790625){\small $\delta_3$}
\usefont{T1}{ptm}{m}{n}
\rput(0.40859374,2.6390624){\small $\delta_4$}
\usefont{T1}{ptm}{m}{n}
\rput(0.06859375,1.4190625){\small $\delta_5$}
\usefont{T1}{ptm}{m}{n}
\rput(0.42859375,0.1590625){\small $\delta_6$}
\usefont{T1}{ptm}{m}{n}
\rput(1.4285938,-0.7609375){\small $\delta_7$}
\usefont{T1}{ptm}{m}{n}
\rput(3.1285937,-0.9209375){\small $\delta_8$}
\usefont{T1}{ptm}{m}{n}
\rput(4.5085936,0.0990625){\small $\delta_9$}
\psdots[dotsize=0.12](1.02875,1.3590626)
\psdots[dotsize=0.12](1.50875,0.7790625)
\psdots[dotsize=0.12](4.30875,0.9190625)
\psdots[dotsize=0.12](3.30875,0.8190625)
\psdots[dotsize=0.12](2.64875,1.9790626)
\psdots[dotsize=0.12](2.96875,2.7590625)
\psbezier[linewidth=0.03,linestyle=dashed,dash=0.16cm 0.16cm](0.64875,2.4790626)(1.14875,2.4590626)(1.78875,1.7790625)(1.94875,1.2790625)(2.10875,0.7790625)(1.92875,-0.0809375)(1.62875,-0.6009375)
\psbezier[linewidth=0.03,linestyle=dashed,dash=0.16cm 0.16cm](4.32875,0.2590625)(3.92875,0.3790625)(3.76875,0.7790625)(3.86875,1.0190625)(3.96875,1.2590625)(4.10875,1.4590625)(4.64875,1.3790625)
\psbezier[linewidth=0.03](0.38875,2.0390625)(0.92875,1.9390625)(1.38875,1.7590625)(1.40875,1.3790625)(1.42875,0.9990625)(1.08875,0.7190625)(0.40875,0.6990625)
\psbezier[linewidth=0.03](2.40875,3.5390625)(2.32875,3.1390624)(2.44875,2.5190625)(2.80875,2.3590624)(3.16875,2.1990626)(3.72875,2.4990625)(3.90875,3.0190625)
\psbezier[linewidth=0.03](2.32875,-0.8009375)(2.20875,-0.2809375)(2.52875,0.7190625)(2.92875,1.0790625)(3.32875,1.4390625)(3.94875,1.7390625)(4.56875,1.8590626)
\usefont{T1}{ptm}{m}{n}
\rput(6.827344,1.3790625){\LARGE $\sim_{top}$}
\usefont{T1}{ptm}{m}{n}
\rput(2.5139062,-3.5059376){\large $( \, [ \, [ \, ] \, ( \, [ \, ] \, ) \, ] \, )$}
\usefont{T1}{ptm}{m}{n}
\rput(6.8890624,-3.4809375){\LARGE $\neq$}
\usefont{T1}{ptm}{m}{n}
\rput(11.055,-3.3659375){\large $( \, [ \, ( \, ) \, ] \, [ \, ] \, ) \, [ \, ]$}
\usefont{T1}{ptm}{m}{n}
\rput(2.410625,-1.9859375){\LARGE $\Updownarrow$}
\usefont{T1}{ptm}{m}{n}
\rput(11.090625,-1.9059376){\LARGE $\Updownarrow$}
\end{pspicture}
}
\end{center}
\caption{Examples of two topological equivalent vector fields with different bracketing representations}
\label{extopbrarep}
\end{figure}
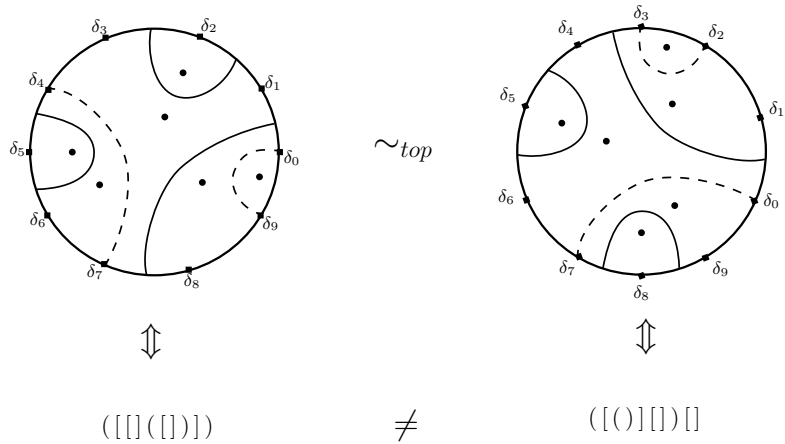

\vspace{2mm}

Consider a bracketing representation of $n$ elements. It is called a valid bracketing if:
\begin{enumerate}
\item[1.] there are an equal number of right and left square (resp. round) parentheses.
\item[2.] the number of left square (resp. round) parentheses must be greater than or equal to the number of right, reading from left to right.
\item[3.] there must be an even number of parentheses and dots between a pair of round (resp. square) associated parentheses, except maybe one.
\item[4.] there must be an equal number of right and left square (resp. round) parentheses between a pair of (round or square) associated parentheses.
\end{enumerate}
This notion of valid bracketing contained an odd number of elements has no relation with polynomial vector fields, but will be useful later.

\subsection{A step towards trees.}

The purpose of this subsection is to show a link between all combinatorial forms (transversal graphs, valid bracketings) presented so far in this article and a more general concept of trees. How we present this link will be very close to what can be found in the article of K.M.Pilgrim \cite{Pilg} making himself the connection between transversal graphs and combinatorial objects called Grothendieck dessins d'enfants. This presentation will be an opportunity to propose a simple encoding of combinatorics we work with so far. In the following, we treat separately the situations with multiple points, and the situations without multiple points. But before that, we will return soon on some basic notions about planar maps.

\vspace{2mm}

A \textbf{planar graph} is a graph that can be drawn in $\mathbb{R}^2$ without edge-crossing. A \textbf{planar map} is a proper embedding of a connected graph into the two-dimensional sphere considered up to orientation preserving homeomorphisms of the sphere. In other words, a planar map is a connected planar graph drawn in the sphere considered up to continuous deformation. 
So the difference between a map and a graph is that : a map has vertices and edges (like graph), and also faces. In the topological point of view, a planar graph is a CW complex of dimension $1$ whereas a planar map is a CW complex of dimension $2$.

Notice also that for a graph, to be able to be drawn in $\mathbb{R}^2$ is equivalent to be able to be drawn in $S^2$. This property is not true for a map. We will finish this introduction by the definition of a rooted planar map. Figure \ref{figexrplmap} shows an example of a rooted map.

\begin{defrmap}
\label{defrmap}
A planar map is called \textbf{rooted} by distinguishing an oriented edge. The original vertex of this distinguished oriented edge is called the root vertex of the map.
\end{defrmap}

\begin{figure}[ht]
\centering
\scalebox{0.5} 
{
\begin{pspicture}(0,-2.697699)(10.797812,2.697699)
\psdots[dotsize=0.12](1.9578125,2.062301)
\psdots[dotsize=0.12](1.9578125,-1.317699)
\psdots[dotsize=0.12](8.557813,2.1023011)
\psdots[dotsize=0.12](8.557813,-1.297699)
\psdots[dotsize=0.12](5.4978123,0.66230106)
\psdots[dotsize=0.12](10.717813,-2.617699)
\psbezier[linewidth=0.04](1.9578125,2.042301)(1.8178124,1.7623011)(1.6978126,0.82230103)(1.7178125,0.24230103)(1.7378125,-0.33769897)(1.7978125,-0.89769894)(1.9378124,-1.317699)
\psbezier[linewidth=0.04](1.9578125,2.0823011)(2.3978126,2.462301)(4.6979313,2.646903)(5.2378125,2.662301)(5.7776937,2.6776989)(8.077812,2.542301)(8.557813,2.142301)
\psbezier[linewidth=0.04](1.9378124,2.062301)(2.1178124,1.742301)(2.9018123,1.1034787)(3.3578124,0.92230105)(3.8138127,0.7411234)(4.8978124,0.68230104)(5.4778123,0.68230104)
\psbezier[linewidth=0.04](5.4978123,0.68230104)(6.1978126,0.642301)(6.7378125,0.902301)(7.1578126,1.1423011)(7.5778127,1.3823011)(8.117812,1.6623011)(8.577812,2.1023011)
\psbezier[linewidth=0.04](8.577812,2.1023011)(8.817813,1.582301)(8.837812,0.66230106)(8.817813,0.38230103)(8.797812,0.10230104)(8.817813,-0.83769894)(8.537812,-1.277699)
\psbezier[linewidth=0.04](8.517813,-1.297699)(8.317813,-1.6176989)(5.637806,-1.7940456)(5.2978125,-1.777699)(4.957819,-1.7613524)(2.2178125,-1.7176989)(1.9778125,-1.3376989)
\psbezier[linewidth=0.04](8.557813,-1.297699)(8.677813,-1.557699)(9.117812,-1.917699)(9.437813,-2.097699)(9.7578125,-2.277699)(10.317813,-2.597699)(10.697812,-2.637699)
\psbezier[linewidth=0.04](5.4978123,0.66230106)(5.5778127,0.30230105)(6.3388343,-0.4135336)(6.8778124,-0.69769895)(7.4167905,-0.98186433)(8.077812,-1.177699)(8.557813,-1.297699)
\psline[linewidth=0.04cm](1.7178125,0.10230104)(1.4578125,0.32230103)
\psline[linewidth=0.04cm](1.7178125,0.10230104)(1.9778125,0.34230104)
\usefont{T1}{ptm}{m}{n}
\rput(0.65,0.25230104){\large root edge}
\usefont{T1}{ptm}{m}{n}
\rput(0.9,2.192301){\large root vertex}
\end{pspicture} 
}
\caption{Example of a rooted planar map}
\label{figexrplmap}
\end{figure}
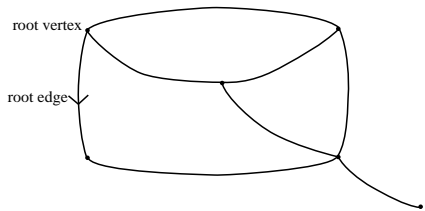

\subsubsection{Case without multiple points.}

Now, back to our combinatorial description. First consider a transversal graph $\Sigma$ associated with a polynomial vector field of degree $d$ without multiple points, i.e. for every index $i \in \{ 0, \dots , 2d-2 \}$ there exists an index $j \in \{ 0, \dots , 2d-2 \}$, $j \neq i$, so that the boards $\tilde{E}_i$ and $\tilde{E}_j$ are connected by a transversal $T_{i,j}$ or by a homoclinic separatrix $s_{i,j}$. From the results reported in the second section of the article, we deduce that the graph $\Sigma$ splits the closed unit disk $\overline{\mathbb{D}}$ into $d$ areas whose boundaries consist of transversals and homoclinic separatrices. Subsequently, we say that two areas are \textbf{adjacent} if their respective boundaries share a transversal or a homoclinic separatrix. The construction of an associated planar map $\mathcal{T}$ is then natural. First, in each area is arranged a vertex and then two vertices are connected to each other if their respective areas are adjacent. More precisely, two vertices are connected by a continuous edge (resp. dotted edge) if the boundaries of their respective areas share a common transversal (resp. homoclinic separatrix). Notice that, the intersection between the planar map $\mathcal{T}$ and the separatrix graph associated to $\Sigma$ is equal to the vertices of $\mathcal{T}$. See Figure \ref{figexeqtgvbgt1} for an example.

\vspace{2mm}

It's easy to see that the planar map $\mathcal{T}$ obtained is a map with only one face seeing him as the dual map of the transversal graph $\Sigma$ on the sphere. These particular maps are called planar trees (or just trees). Notice that our trees have the particularity to be defined by distinguishing two types of edges. Now, we need to translate the numeration of boards $\tilde{E}_i$. For that, we root our tree. Consider the oriented transversal $T_{0,j}$ (resp. homoclinic separatrix $s_{0,j}$), i.e. the transversal (resp. homoclinic separatrix) comes from $\tilde{E}_0$ to $\tilde{E}_j$, and denote by $x_0$ the vertex of $\mathcal{T}$ to the left of this transversal (resp. homoclinic separatrix). This vertex will be the root vertex of the tree.

Similarly, consider the oriented transversal $T_{1,k}$ (resp. homoclinic separatrix $s_{1,k}$), and denote by $x_1$ the vertex of $\mathcal{T}$ to the left of this transversal (resp. homoclinic separatrix). Then we define the root of the tree as the edge from $x_0$ to $x_1$.

\vspace{2mm}

So, we get an application that transforms a given transversal graph into a rooted tree. Conversely, consider a rooted tree $\mathcal{T}$ as defined above, we will show by another construction that this tree is linked to a valid bracketing (the same valid bracketing describing the transversal graph $\Sigma$ which comes from the tree). 

From the root vertex, and following the direction given by the root, we will follow the contour of the tree in the counterclockwise direction, stopping at each vertex encountered. In doing so, we will go through each edge exactly twice. So, following the contour defined later, we will write in a string of elements a left square parenthesis $[$ whenever a continuous edge is passed for the first time, and a right one "$]$" whenever a continuous edge is passed for the second time. Similarly, we will write in the same string a left round parenthesis "$($" whenever a dotted edge is passed for the first time, and a right one "$)$" whenever a dotted edge is passed for the second time. This is a classical encoding for trees. See \cite{tree} for more details on this encoding. \newline
It's easy to see that this transformation construct a valid bracketing. It's left to the reader to verify that the valid bracketing obtained thanks to the rooted tree is the same as that obtained directly through the transversal graph $\Sigma$. An example is given at Figure \ref{figexeqtgvbgt1}.

\begin{figure}[ht]
\centering
\scalebox{0.7} 
{
\begin{pspicture}(0,-4.5781555)(16.295,4.5931554)
\psbezier[linewidth=0.05,arrowsize=0.05291667cm 4.0,arrowlength=1.4,arrowinset=0.4]{->}(5.32875,2.237218)(5.92875,2.577218)(6.62875,2.637218)(6.98875,2.637218)(7.34875,2.637218)(7.92875,2.477218)(8.54875,2.057218)
\pscircle[linewidth=0.04,dimen=outer](2.46875,2.177218){2.2}
\psdots[dotsize=0.12,dotstyle=square*](4.66875,2.177218)
\psdots[dotsize=0.12,dotstyle=square*](0.26875,2.177218)
\psdots[dotsize=0.12,dotstyle=square*](3.26875,4.217218)
\psdots[dotsize=0.12,dotstyle=square*](1.60875,4.197218)
\psdots[dotsize=0.12,dotstyle=square*](4.34875,3.297218)
\psdots[dotsize=0.12,dotstyle=square*](0.60875,3.2772179)
\psdots[dotsize=0.12,dotstyle=square*](0.58875,1.057218)
\psdots[dotsize=0.12,dotstyle=square*](1.58875,0.19721797)
\psdots[dotsize=0.12,dotstyle=square*](3.06875,0.09721797)
\psdots[dotsize=0.12,dotstyle=square*](4.32875,1.057218)
\usefont{T1}{ptm}{m}{n}
\rput(4.8285937,2.077218){\small $\delta_0$}
\usefont{T1}{ptm}{m}{n}
\rput(4.5485935,3.397218){\small $\delta_1$}
\usefont{T1}{ptm}{m}{n}
\rput(3.4285936,4.417218){\small $\delta_2$}
\usefont{T1}{ptm}{m}{n}
\rput(1.5085938,4.3972178){\small $\delta_3$}
\usefont{T1}{ptm}{m}{n}
\rput(0.40859374,3.457218){\small $\delta_4$}
\usefont{T1}{ptm}{m}{n}
\rput(0.06859375,2.237218){\small $\delta_5$}
\usefont{T1}{ptm}{m}{n}
\rput(0.42859375,0.977218){\small $\delta_6$}
\usefont{T1}{ptm}{m}{n}
\rput(1.4285938,0.05721797){\small $\delta_7$}
\usefont{T1}{ptm}{m}{n}
\rput(3.1285937,-0.10278203){\small $\delta_8$}
\usefont{T1}{ptm}{m}{n}
\rput(4.5085936,0.91721797){\small $\delta_9$}
\psdots[dotsize=0.12](1.02875,2.177218)
\psdots[dotsize=0.12](1.50875,1.5972179)
\psdots[dotsize=0.12](4.30875,1.737218)
\psdots[dotsize=0.12](3.30875,1.637218)
\psdots[dotsize=0.12](2.64875,2.797218)
\psdots[dotsize=0.12](2.96875,3.577218)
\psbezier[linewidth=0.03,linestyle=dashed,dash=0.16cm 0.16cm](0.64875,3.297218)(1.14875,3.2772179)(1.78875,2.597218)(1.94875,2.097218)(2.10875,1.5972179)(1.92875,0.73721796)(1.62875,0.21721797)
\psbezier[linewidth=0.03,linestyle=dashed,dash=0.16cm 0.16cm](4.32875,1.0772179)(3.92875,1.197218)(3.76875,1.5972179)(3.86875,1.8372179)(3.96875,2.077218)(4.10875,2.2772179)(4.64875,2.197218)
\psbezier[linewidth=0.03](0.38875,2.857218)(0.92875,2.757218)(1.38875,2.577218)(1.40875,2.197218)(1.42875,1.817218)(1.08875,1.537218)(0.40875,1.517218)
\psbezier[linewidth=0.03](2.40875,4.357218)(2.32875,3.957218)(2.44875,3.337218)(2.80875,3.177218)(3.16875,3.0172179)(3.72875,3.317218)(3.90875,3.837218)
\psbezier[linewidth=0.03](2.32875,0.01721797)(2.20875,0.537218)(2.52875,1.537218)(2.92875,1.897218)(3.32875,2.257218)(3.94875,2.557218)(4.56875,2.677218)
\psdots[dotsize=0.12](9.94875,2.137218)
\psdots[dotsize=0.12](10.42875,1.557218)
\psdots[dotsize=0.12](13.22875,1.697218)
\psdots[dotsize=0.12](12.22875,1.5972179)
\psdots[dotsize=0.12](11.56875,2.757218)
\psdots[dotsize=0.12](11.88875,3.5372179)
\psline[linewidth=0.04cm](9.96875,2.117218)(10.38875,1.5972179)
\psline[linewidth=0.04cm,linestyle=dashed,dash=0.16cm 0.16cm](10.42875,1.5772179)(11.54875,2.757218)
\psline[linewidth=0.04cm](11.58875,2.7772179)(11.86875,3.5372179)
\psline[linewidth=0.04cm](11.58875,2.737218)(12.20875,1.617218)
\psline[linewidth=0.04cm,linestyle=dashed,dash=0.16cm 0.16cm](12.24875,1.5972179)(13.20875,1.697218)
\psline[linewidth=0.04cm](12.62875,1.617218)(12.78875,1.877218)
\psline[linewidth=0.04cm](12.64875,1.617218)(12.84875,1.4572179)

\usefont{T1}{ptm}{m}{n}
\rput(7.9959373,-3.3359375){\large $( \, [ \, [ \, ] \, ( \, [ \, ] \, ) \, ] \, )$}
\psbezier[linewidth=0.05,arrowsize=0.12cm 2.0,arrowlength=1.4,arrowinset=0.4]{->}(6.76875,-3.3109374)(5.94875,-3.2109375)(5.54875,-2.9309375)(5.18875,-2.7109375)(4.82875,-2.4909375)(4.46875,-2.1909375)(4.08875,-1.4909375)
\psbezier[linewidth=0.05,arrowsize=0.12cm 2.0,arrowlength=1.4,arrowinset=0.4]{->}(11.92875,-0.4309375)(11.84875,-1.0909375)(11.64875,-1.9109375)(11.50875,-2.1509376)(11.36875,-2.3909376)(10.76875,-3.2909374)(9.86875,-3.2709374)
\end{pspicture} 
}
\caption{Example of equivalence between a transversal graph without multiple points and its respective valid bracketing and generalized tree.}
\label{figexeqtgvbgt1}
\end{figure}
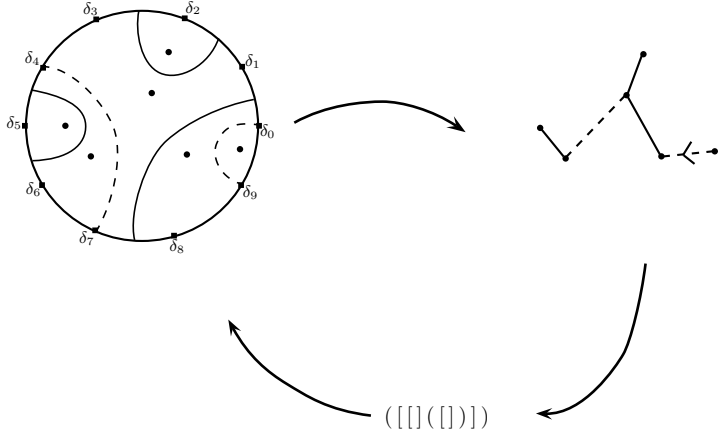

\subsubsection{Case with multiple points.}

Now, consider a transversal graph $\Sigma'$ associated with a polynomial vector field of degree $d$ with multiple point(s), i.e. there is at least one index $i \in \{ 0, \dots , 2d-2 \}$ such that the board $\tilde{E}_i$ is not connected to another one. In this case, the graph $\Sigma'$ splits the closed unit disk into $k<d$ areas whose boundaries consist of transversals and homoclinic separatrices. In first, we perform the same construction as in the previous case : in each area is arranged a vertex and two vertices are connected by a continuous (resp. dotted) edge to each other if their respective areas share a common transversal (resp. homoclinic separatrix). Then, we must take in consideration the boards which are not connected with other boards. It's important to notice that each of these boards is entirely contained in an area. So, we can draw a (continuous) edge from a point of a board which is not connected with another one to the vertex associated to the area containing this board. In this case, our map $\mathcal{T}'$ has the particularity to have some edges connected by only one vertex. These edges are called \textbf{half-edges}. 

\vspace{2mm}

So, from the transversal graph $\Sigma'$, we construct a planar map $\mathcal{T}'$, similar to the previous one, which contain some half-edges. In the following, these trees will be called \textbf{generalized trees}. As above, now we will root the map $\mathcal{T}'$ as follows: denote by $x_0$ the vertex defined as the vertex to the left of the oriented transversal $T_{0,j}$ (resp. homoclinic separatrix $s_{0,j}$) or as the vertex associated to the area containing entirely the end $\tilde{E}_0$ if $\tilde{E}_0$ is not connected with another end. This vertex will be the root vertex of the tree $\mathcal{T}'$. 
Similarly, note $x_1$ the vertex defined as the vertex to the left of the oriented transversal $T_{1,k}$ (resp. homoclinic separatrix $s_{1,k}$) or as the vertex associated to the area containing entirely the end $\tilde{E}_1$. Then we define the root of $\mathcal{T}'$ as the edge from $x_0$ to $x_1$.

\vspace{3mm}

Conversely, consider a rooted generalized tree $\mathcal{T}'$. We will use the same encoding as before with only one difference. Because of the existence of half-edges, it's possible, when we follow the contour of the map, to go from a vertex to itself. In this case, we will simply add a unique dot $\bullet$ in the string of elements. In other words, we will suppose that half-edges are passed only once (and not two as for edges). Through the same transformation as above, it's easy to see that this encoding is equivalent to a valid bracketing and that the valid bracketing obtained thanks to $\mathcal{T}'$ is the same as that obtained directly through the transversal graph $\Sigma'$. An example is given at Figure \ref{figexeqtgvbgt2}.

\begin{figure}[ht]
\centering
\scalebox{0.7} 
{
\begin{pspicture}(0,-4.5781555)(15.733125,4.5931554)
\psbezier[linewidth=0.05,arrowsize=0.05291667cm 4.0,arrowlength=1.4,arrowinset=0.4]{->}(5.74875,2.237218)(6.34875,2.577218)(7.04875,2.637218)(7.40875,2.637218)(7.76875,2.637218)(8.34875,2.477218)(8.96875,2.057218)
\pscircle[linewidth=0.04,dimen=outer](2.46875,2.177218){2.2}
\psdots[dotsize=0.12,dotstyle=square*](4.66875,2.177218)
\psdots[dotsize=0.12,dotstyle=square*](0.26875,2.177218)
\psdots[dotsize=0.12,dotstyle=square*](3.26875,4.217218)
\psdots[dotsize=0.12,dotstyle=square*](1.60875,4.197218)
\psdots[dotsize=0.12,dotstyle=square*](4.34875,3.297218)
\psdots[dotsize=0.12,dotstyle=square*](0.60875,3.2772179)
\psdots[dotsize=0.12,dotstyle=square*](0.58875,1.057218)
\psdots[dotsize=0.12,dotstyle=square*](1.58875,0.19721797)
\psdots[dotsize=0.12,dotstyle=square*](3.06875,0.09721797)
\psdots[dotsize=0.12,dotstyle=square*](4.32875,1.057218)
\usefont{T1}{ptm}{m}{n}
\rput(4.8285937,2.077218){\small $\delta_0$}
\usefont{T1}{ptm}{m}{n}
\rput(4.5485935,3.397218){\small $\delta_1$}
\usefont{T1}{ptm}{m}{n}
\rput(3.4285936,4.417218){\small $\delta_2$}
\usefont{T1}{ptm}{m}{n}
\rput(1.5085938,4.3972178){\small $\delta_3$}
\usefont{T1}{ptm}{m}{n}
\rput(0.40859374,3.457218){\small $\delta_4$}
\usefont{T1}{ptm}{m}{n}
\rput(0.06859375,2.237218){\small $\delta_5$}
\usefont{T1}{ptm}{m}{n}
\rput(0.42859375,0.977218){\small $\delta_6$}
\usefont{T1}{ptm}{m}{n}
\rput(1.4285938,0.05721797){\small $\delta_7$}
\usefont{T1}{ptm}{m}{n}
\rput(3.1285937,-0.10278203){\small $\delta_8$}
\usefont{T1}{ptm}{m}{n}
\rput(4.5085936,0.91721797){\small $\delta_9$}
\psdots[dotsize=0.12](1.02875,2.177218)
\psdots[dotsize=0.12](1.50875,1.5972179)
\psdots[dotsize=0.12](4.14875,2.177218)
\psdots[dotsize=0.12](2.84875,2.2772179)
\psbezier[linewidth=0.03,linestyle=dashed,dash=0.16cm 0.16cm](0.64875,3.297218)(1.14875,3.2772179)(1.78875,2.597218)(1.94875,2.097218)(2.10875,1.5972179)(1.92875,0.73721796)(1.62875,0.21721797)
\psbezier[linewidth=0.03](4.52875,1.417218)(4.02875,1.617218)(3.76875,1.937218)(3.82875,2.237218)(3.88875,2.5372179)(4.02875,2.7772179)(4.52875,2.837218)
\psbezier[linewidth=0.03](0.38875,2.857218)(0.92875,2.757218)(1.38875,2.577218)(1.40875,2.197218)(1.42875,1.817218)(1.08875,1.537218)(0.40875,1.517218)
\psdots[dotsize=0.12](10.02875,2.097218)
\psdots[dotsize=0.12](10.50875,1.517218)
\psdots[dotsize=0.12](13.14875,2.097218)
\psdots[dotsize=0.12](11.84875,2.197218)
\psline[linewidth=0.04cm](10.02875,2.117218)(10.48875,1.537218)
\psline[linewidth=0.04cm,linestyle=dashed,dash=0.16cm 0.16cm](10.50875,1.537218)(11.82875,2.197218)
\psline[linewidth=0.04cm](11.86875,2.217218)(13.12875,2.097218)
\psline[linewidth=0.04cm](11.86875,2.237218)(12.38875,3.157218)
\psline[linewidth=0.04cm](11.84875,2.237218)(11.28875,3.177218)
\psline[linewidth=0.04cm](11.84875,2.217218)(11.44875,1.177218)
\psline[linewidth=0.04cm](11.84875,2.197218)(12.52875,1.277218)
\psline[linewidth=0.04cm](12.60875,2.337218)(12.74875,2.137218)
\psline[linewidth=0.04cm](12.74875,2.117218)(12.56875,1.9572179)

\usefont{T1}{ptm}{m}{n}
\rput(8.2959373,-3.3359375){\large $[ \, ] \, \bullet \, \bullet \, ( \, [ \, ] \, ) \, \bullet \, \bullet$}
\psbezier[linewidth=0.05,arrowsize=0.12cm 2.0,arrowlength=1.4,arrowinset=0.4]{->}(6.76875,-3.3109374)(5.94875,-3.2109375)(5.54875,-2.9309375)(5.18875,-2.7109375)(4.82875,-2.4909375)(4.46875,-2.1909375)(4.08875,-1.4909375)
\psbezier[linewidth=0.05,arrowsize=0.12cm 2.0,arrowlength=1.4,arrowinset=0.4]{->}(11.92875,-0.4309375)(11.84875,-1.0909375)(11.64875,-1.9109375)(11.50875,-2.1509376)(11.36875,-2.3909376)(10.76875,-3.2909374)(9.86875,-3.2709374)
\end{pspicture} 
}
\caption{Example of equivalence between a transversal graph with a multiple point and its respective valid bracketing and generalized tree.}
\label{figexeqtgvbgt2}
\end{figure}
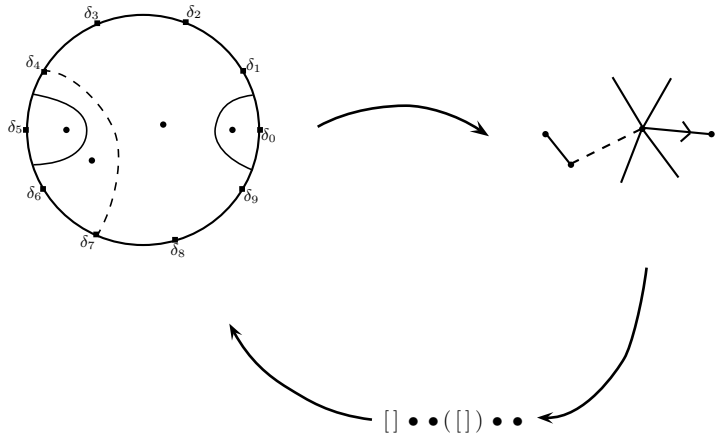

In conclusion, we have established a bijection between the set of transversal graphs and the set of rooted generalized trees. By taking quotients we also obtain a bijection between the set of transversal graphs quotiented by the group generated by a rotation of the unit circle of order $1/(2d-2)$ and the set of unrooted generalized tree.

\section{Enumeration}

\subsection{A method to enumerate unrooted maps.}

Results of the last section imply that counting the number of polynomial vector fields to topological equivalence amounts to count the number of unrooted generalized trees. In general, enumerate unrooted maps is a difficult problem due to the presence of various symmetries. Here, we use a method developed by V.A.Liskovets to overcome this difficulty. Only main arguments of this method will be given, for more details see \cite{Lisken} and \cite{Lisk}. The interested reader can also see \cite{Lisken} for an application and \cite{Liskexen} for some results obtained thanks to that method. Notice that results we use here are extensions of results proved by V.A.Liskovets because we work with a generalized concept of maps, due to the presence of half-edges, but demonstrations of these results are the same. In the following, we only work with a concept of generalized maps.

\vspace{3mm}

For the problem under consideration, let $\mathcal{M}$ be a certain set of maps described in a given surface, and $\mathcal{M}(n) \subset \mathcal{M}$ be the subset of $\mathcal{M}$ containing all maps of $\mathcal{M}$ with exactly $n$ edges ($n \geq 2$). Moreover, we suppose that the surface where we construct our maps is orientable and has a given orientation. Then we can calculate the number of unrooted maps knowing the number of $l$-rooted maps (see Definition \ref{deflrmap}). More precisely, we have the following result:

\begin{thenmap}
\label{thenmap}
\begin{displaymath}
M^+(n) = \frac{1}{2n} \sum_{l \geq 1 \atop l | 2n}{\varphi(l).M^{(+,l)}(n)}, \quad n \geq 2
\end{displaymath}
where $M^+(n)$ (respectively $M^{(+,l)}(n)$) is the number of non-isomorphic (respectively $l$-rooted) maps in $\mathcal{M}(n)$ considered up to orientation-preserving homeomorphisms and $\varphi$ is the Euler totient function.
\end{thenmap}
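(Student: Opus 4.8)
The plan is to run a Burnside-type (orbit-counting) argument on the set of darts of a map, exploiting the basic rigidity property of maps: an orientation-preserving automorphism is completely determined by the image of a single dart. Throughout I fix $n$, write $\mathrm{Aut}^{+}(M)$ for the group of orientation-preserving automorphisms of a map $M\in\mathcal{M}(n)$, put $m_M=|\mathrm{Aut}^{+}(M)|$, and let $D_M$ be the set of darts of $M$ (directed edges, a full edge accounting for two darts and a half-edge for one), so that $|D_M|=2n$ in the normalization used in the statement.

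First I would record the structural fact underlying everything: $\mathrm{Aut}^{+}(M)$ acts \emph{freely} on $D_M$. Since $M$ is connected and an automorphism must preserve the local rotation system at each vertex (orientation-preservation), the image of every dart is forced as soon as the image of one dart is chosen; hence an automorphism fixing a dart is the identity. The presence of half-edges does not interfere — a half-edge simply occupies a place in a rotation system and is transported like any other dart — which is exactly why, as the text remarks, Liskovets' proof applies verbatim to generalized maps. From the free action and orbit–stabilizer I get that $m_M\mid 2n$, that each dart-orbit has size $m_M$, and that the number of rooted maps lying over a fixed unrooted $M$ is $2n/m_M$.

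Next I would translate the $l$-rooted maps into this language. By Definition~\ref{deflrmap}, an $l$-rooted map is (an isomorphism class of) a map equipped with a distinguished dart together with a distinguished cyclic subgroup $H\leq\mathrm{Aut}^{+}(M)$ of order $l$ (equivalently, passing to the quotient by $H$, a rooted map on the quotient surface carrying the corresponding branch data). The isomorphism classes of $l$-rooted maps lying over a fixed $M$ are the orbits of the diagonal action $\gamma\cdot(d,H)=(\gamma d,\gamma H\gamma^{-1})$ of $\mathrm{Aut}^{+}(M)$ on the set of pairs $(d,H)$ with $d\in D_M$ and $H\leq\mathrm{Aut}^{+}(M)$ cyclic of order $l$. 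Applying Burnside's lemma and using once more that only the identity fixes a dart, the number of such orbits is $2n\,c_l(\mathrm{Aut}^{+}(M))/m_M$, where $c_l(G)$ denotes the number of cyclic subgroups of order $l$ of a group $G$ (note $c_l(G)$, rather than just "$l\mid|G|$", is genuinely needed, as $\mathrm{Aut}^{+}(M)$ need not be cyclic). Summing over isomorphism classes,
$$M^{(+,l)}(n)=\sum_{M}\frac{2n\,c_l(\mathrm{Aut}^{+}(M))}{m_M},$$
and in particular $M^{(+,1)}(n)$ is the number of rooted maps, as it must be.

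Finally I would close the loop with the elementary identity $\sum_{l\mid m}\varphi(l)\,c_l(G)=|G|$ valid for every finite group $G$ of order $m$ (each element generates a unique cyclic subgroup, of which it is one of $\varphi(l)$ generators when that subgroup has order $l$). Since $c_l(\mathrm{Aut}^{+}(M))=0$ unless $l\mid m_M$, and $m_M\mid 2n$, the sum may be taken over all $l\mid 2n$ without change, so
$$\frac{1}{2n}\sum_{l\mid 2n}\varphi(l)\,M^{(+,l)}(n)=\sum_{M}\frac{1}{m_M}\sum_{l\mid 2n}\varphi(l)\,c_l(\mathrm{Aut}^{+}(M))=\sum_{M}1=M^{+}(n),$$
which is the asserted formula. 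The main obstacle is the first geometric step — verifying the free dart-action in the generalized (half-edge) category and matching Definition~\ref{deflrmap} precisely to the dart-plus-order-$l$-cyclic-subgroup datum used in the Burnside count; once those are granted, the remainder is the orbit-counting bookkeeping above together with the subgroup-counting identity, exactly as in Liskovets' original argument.
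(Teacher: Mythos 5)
Your argument is correct, and it is worth noting that the paper itself contains no proof of Theorem \ref{thenmap}: the statement is quoted from Liskovets with the remark that the demonstrations carry over unchanged to maps with half-edges, so the relevant comparison is with Liskovets' group-action method, and your Burnside computation is precisely that method written out in full. The two points you flag as the real content are indeed the only delicate ones, and both are resolved consistently with how the theorem is used later in the paper: (i) the normalization $|D_M|=2n$, with a half-edge contributing a single dart, is the one implicit in the sum over $l\mid 2n$ and in the application to generalized trees, where $2n$ is the number of bracketing elements; (ii) the informal Definition \ref{deflrmap} (a map with $l$ rooted edges cyclically permuted by a symmetry of order $l$) matches your pairs $(d,H)$, because freeness of the dart action makes an $l$-element root orbit determine uniquely the order-$l$ cyclic subgroup producing it (if $H_1d=H_2d$ then $h_1d=h_2d$ forces $h_1=h_2$), and a short orbit count shows both formalizations give the same per-map contribution $2n\,c_l(\mathrm{Aut}^{+}(M))/m_M$. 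Granting freeness — which your rotation-system propagation argument does establish in the half-edge category, since half-edges are carried like ordinary darts and connectivity is achieved through full edges — the identity $\sum_{l}\varphi(l)\,c_l(G)=|G|$ closes the computation, and your formula for $M^{(+,l)}(n)$ also recovers the paper's remark that $M^{(+,1)}(n)=M'(n)$; it is likewise compatible with the quotient-map reformulation in Theorem \ref{thenmapnr}, since modding out by $H$ turns your pair $(d,H)$ into a rooted quotient map.
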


Notice that $M^{+,1}(n)$ is simply the number of rooted maps in $\mathcal{M}(n)$, this set is generally noted $M'(n)$. We will use this notation in the following.

This theorem, obtained thanks to some algebraic results on the group action theory, is true in all orientable surfaces, but have a more interesting interpretation in the case of the sphere $S^2$. In fact, according to a result of Mani \cite{Mani}, any planar map may be represented on the (geometrical) sphere in such a way that all its automorphisms are induced by symmetries of the sphere. On the other hand, as we only consider up to orientation-preserving automorphisms, this result implies that all symmetries of a given planar map, described on the sphere with a good representation, can be deduced by rotations of the sphere. This observation will induce later the concept of quotient maps that will be very useful to enumerate the $l$-rooted maps which is at first sight a difficult problem. But first we need to define the notion of $l$-rooted map.

\begin{deflrmap}
\label{deflrmap}
A $l$-rooted map is a map with $l$ rooted edges that exhibits a symmetry of order $l$ (with respect to the roots).
\end{deflrmap}

The definition of $l$-rooted maps is a natural generalization of the concept of rooted maps introduced in Definition \ref{defrmap}. The reader can find an example of a $3$-rooted map in Figure \ref{figex3rplmap} and an example of a $2$-rooted map in Figure \ref{figex2rmquo}.

\begin{figure}[ht]
\centering
\scalebox{0.7} 
{
\begin{pspicture}(0,-2.16)(4.12,2.16)
\psdots[dotsize=0.12](2.06,2.08)
\psdots[dotsize=0.12](2.06,0.1)
\psdots[dotsize=0.12](4.04,-1.48)
\psdots[dotsize=0.12](0.06,-1.5)
\psbezier[linewidth=0.04](2.06,2.1)(2.6,1.82)(2.9160924,1.2658379)(3.16,0.9)(3.4039075,0.5341621)(4.04,-0.62)(4.06,-1.48)
\psbezier[linewidth=0.04](4.04,-1.48)(3.74,-1.88)(2.54,-1.92)(2.08,-1.92)(1.62,-1.92)(0.4,-1.96)(0.04,-1.52)
\psbezier[linewidth=0.04](0.04,-1.52)(0.08,-0.9)(0.58832383,0.32922187)(0.84,0.78)(1.0916761,1.2307781)(1.52,1.8)(2.04,2.1)
\psbezier[linewidth=0.04](2.04,2.08)(2.04,1.28)(2.04,0.76)(2.06,0.12)
\psbezier[linewidth=0.04](2.06,0.12)(2.38,-0.36)(2.7973547,-0.7575322)(3.02,-0.92)(3.2426453,-1.0824678)(3.5,-1.24)(4.02,-1.44)
\psbezier[linewidth=0.04](2.04,0.1)(1.84,-0.28)(1.44,-0.7)(1.28,-0.84)(1.12,-0.98)(0.52,-1.32)(0.06,-1.48)
\psline[linewidth=0.04cm](0.8,0.68)(0.9,0.36)
\psline[linewidth=0.04cm](0.8,0.7)(0.46,0.54)
\psline[linewidth=0.04cm](3.56,0.68)(3.48,0.36)
\psline[linewidth=0.04cm](3.48,0.36)(3.14,0.48)
\psline[linewidth=0.04cm](1.8,-1.94)(2.0,-1.68)
\psline[linewidth=0.04cm](1.8,-1.92)(2.04,-2.14)
\end{pspicture} 
}
\caption{Example of a $3$-rooted planar map}
\label{figex3rplmap}
\end{figure}
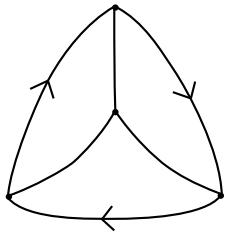

Now back to the concept of quotient maps mentioned earlier. We will define this notion in a geometrical point of view. Let $A$ be a planar map admitting a symmetry of order $l$, and consider a representation of this map on the sphere such that the symmetry of $A$ is induced by a rotation $\rho$ of the sphere of order $l$. Then, we construct the quotient map $B$ of $A$, with respect to the rotation $\rho$, by cutting out a spherical sector with angle $2\pi /l$ bearing upon the poles (i.e. the intersection between the rotation axis and the sphere) and then by glueing its boundary half-circles to form a sphere so that the quotient map is also a planar map. Figure \ref{exquograph} shows an example of the construction of a quotient map. For the sake of clarity, other examples of quotient map will be represented  on the plane.

\vspace{2mm}

Notice that the quotient map is independent of the choice of the sector because of the symmetry of the map $A$. Moreover, if $A$ is rooted, then we will choose a sector that contains the root, so that the quotient map $B$ is also rooted. Similarly, if $A$ is a $l$-rooted map, its quotient map, with respect to the symmetry of $A$ of order $l$, is a rooted map (due to the definition of a $l$-rooted map which implies that each sector contains one and only one root of the map $A$). In particular, it is for this reason that the concept of quotient map will be a great help later.

Notice also that the poles of the map $A$ are still contained on the quotient map $B$. The vertices, edges or faces (called cells) of the map $A$ (resp. the vertices, half-edges or faces of the quotient map $B$) which contain these poles are called \textbf{axial}.

\begin{figure}[ht]
\begin{center}
\scalebox{0.7} 
{
\begin{pspicture}(0,-5.59)(15.5075,5.59)
\pscircle[linewidth=0.04,dimen=outer](2.4,2.73){2.4}
\psbezier[linewidth=0.04](0.04,2.73)(0.4,2.29)(1.44,2.13)(2.44,2.13)(3.44,2.13)(4.16,2.27)(4.8,2.73)
\psbezier[linewidth=0.04,linestyle=dotted,dotsep=0.16cm](4.8,2.73)(4.44,3.17)(3.4,3.33)(2.4,3.33)(1.4,3.33)(0.68,3.19)(0.04,2.73)
\psdots[dotsize=0.2,dotstyle=x](3.42,3.67)
\psdots[dotsize=0.2,dotstyle=x](1.48,1.53)
\psline[linewidth=0.04cm,linestyle=dotted,dotsep=0.16cm](3.3,3.57)(0.88,0.85)
\psline[linewidth=0.04cm,linestyle=dashed,dash=0.16cm 0.16cm](3.52,3.77)(5.16,5.57)
\psline[linewidth=0.04cm,linestyle=dashed,dash=0.16cm 0.16cm](0.92,0.89)(0.44,0.29)
\psdots[dotsize=0.12](2.6,4.17)
\psdots[dotsize=0.12](3.82,4.19)
\psdots[dotsize=0.12](3.8,2.93)
\psdots[dotsize=0.12](2.62,2.91)
\psdots[dotsize=0.12](4.36,3.57)
\psdots[dotsize=0.12](2.0,3.57)
\psdots[dotsize=0.12](3.14,4.83)
\psdots[dotsize=0.12](3.34,2.29)
\psbezier[linewidth=0.04](2.6,4.17)(2.82,4.21)(3.0430474,4.2319884)(3.2,4.23)(3.3569524,4.2280116)(3.62,4.21)(3.82,4.19)
\psbezier[linewidth=0.04](3.82,4.21)(3.9,3.97)(3.96,3.71)(3.96,3.57)(3.96,3.43)(3.94,3.15)(3.82,2.95)
\psbezier[linewidth=0.04](2.62,2.93)(2.78,2.87)(3.0800092,2.8257174)(3.24,2.83)(3.3999908,2.8342826)(3.68,2.85)(3.8,2.93)
\psbezier[linewidth=0.04](2.6,2.93)(2.7,3.15)(2.7,3.37)(2.7,3.55)(2.7,3.73)(2.68,4.03)(2.58,4.17)
\psbezier[linewidth=0.04](3.14,4.83)(3.3,4.75)(3.42,4.69)(3.52,4.59)(3.62,4.49)(3.72,4.39)(3.82,4.21)
\psbezier[linewidth=0.04](2.6,4.19)(2.68,4.35)(2.74,4.51)(2.84,4.61)(2.94,4.71)(3.02,4.77)(3.12,4.85)
\psbezier[linewidth=0.04](3.82,4.21)(3.98,4.11)(4.06,4.05)(4.18,3.93)(4.3,3.81)(4.3,3.75)(4.36,3.59)
\psbezier[linewidth=0.04](3.8,2.95)(3.98,2.95)(4.14,3.11)(4.22,3.21)(4.3,3.31)(4.38,3.41)(4.36,3.57)
\psbezier[linewidth=0.04](3.8,2.93)(3.78,2.75)(3.72,2.67)(3.62,2.55)(3.52,2.43)(3.46,2.41)(3.32,2.29)
\psbezier[linewidth=0.04](3.32,2.31)(3.22,2.426129)(3.0604649,2.556581)(2.98,2.6390324)(2.8995352,2.7214835)(2.7,2.8325806)(2.6,2.91)
\psbezier[linewidth=0.04](2.6,2.91)(2.46,3.09)(2.4,3.17)(2.32,3.25)(2.24,3.33)(2.1,3.47)(1.98,3.57)
\psbezier[linewidth=0.04](1.98,3.57)(2.04,3.71)(2.1494231,3.8481598)(2.24,3.93)(2.330577,4.0118403)(2.44,4.09)(2.58,4.19)
\usefont{T1}{ptm}{m}{n}
\rput(5.842656,5.26){rotation axis}
\psbezier[linewidth=0.05,arrowsize=0.12cm 2.0,arrowlength=1.4,arrowinset=0.4]{->}(6.62,2.71)(6.94,3.07)(8.141643,3.1872988)(8.74,3.17)(9.338357,3.1527011)(10.14,2.99)(10.56,2.55)
\usefont{T1}{ptm}{m}{n}
\rput(8.480625,2.7){cutting}
\usefont{T1}{ptm}{m}{n}
\rput(8.454687,2.34){a sector}
\psdots[dotsize=0.2,dotstyle=x](12.56,4.67)
\psdots[dotsize=0.2,dotstyle=x](12.62,0.07)
\psbezier[linewidth=0.04,linestyle=dashed,dash=0.16cm 0.16cm](12.56,0.07)(12.12,0.33)(11.317588,1.1701547)(11.32,2.31)(11.3224125,3.4498453)(12.14,4.41)(12.52,4.67)
\psbezier[linewidth=0.04,linestyle=dashed,dash=0.16cm 0.16cm](12.622413,4.67)(13.08,4.41)(13.86,3.49)(13.862412,2.43)(13.864825,1.37)(13.08,0.33)(12.662413,0.07)
\psbezier[linewidth=0.04,linestyle=dashed,dash=0.16cm 0.16cm](11.36,2.41)(11.576508,2.11)(12.324515,2.0579998)(12.659048,2.05)(12.99358,2.0420003)(13.466032,2.11)(13.84,2.41)
\psline[linewidth=0.04cm,linestyle=dotted,dotsep=0.16cm](11.34,2.41)(12.6,2.73)
\psline[linewidth=0.04cm,linestyle=dotted,dotsep=0.16cm](12.6,2.73)(13.82,2.41)
\psdots[dotsize=0.12](12.58,3.07)
\psdots[dotsize=0.12](11.4,2.91)
\psdots[dotsize=0.12](13.84,2.77)
\psbezier[linewidth=0.04](11.42,2.91)(11.6,2.89)(11.810802,2.7965872)(12.02,2.79)(12.2291975,2.783413)(12.52,2.89)(12.58,3.09)
\psbezier[linewidth=0.04](12.58,3.07)(12.7,2.81)(12.94,2.73)(13.1,2.73)(13.26,2.73)(13.62,2.71)(13.82,2.77)
\psbezier[linewidth=0.04](11.96,4.09)(12.12,4.07)(12.21037,4.030251)(12.32,3.89)(12.429629,3.7497492)(12.58,3.33)(12.58,3.07)
\psbezier[linewidth=0.04](12.58,3.07)(12.58,3.39)(12.695726,3.708794)(12.8,3.83)(12.904273,3.951206)(12.98,4.03)(13.22,4.07)
\usefont{T1}{ptm}{m}{n}
\rput(12.95125,4.85){\small pole}
\usefont{T1}{ptm}{m}{n}
\rput(12.23125,-0.05){\small pole}
\psbezier[linewidth=0.05,arrowsize=0.12cm 2.0,arrowlength=1.4,arrowinset=0.4]{->}(12.78,-0.43)(13.06,-0.87)(13.081014,-1.0102208)(13.1,-1.59)(13.118986,-2.169779)(13.06,-2.19)(12.82,-2.85)
\usefont{T1}{ptm}{m}{n}
\rput(14.534219,-1.72){bearing up to}
\usefont{T1}{ptm}{m}{n}
\rput(14.492031,-2.12){the poles}
\psdots[dotsize=0.16](12.82,-5.49)
\psdots[dotsize=0.16](12.8,-4.13)
\psbezier[linewidth=0.04](12.78,-4.11)(12.28,-4.11)(12.3,-3.25)(12.8,-3.25)(13.3,-3.25)(13.28,-4.11)(12.8,-4.11)
\psbezier[linewidth=0.04](12.78,-4.13)(12.44,-4.13)(12.38,-4.65)(12.38,-4.85)(12.38,-5.05)(12.44,-5.49)(12.8,-5.49)
\psbezier[linewidth=0.04](12.82,-5.49)(13.18,-5.49)(13.28,-5.09)(13.28,-4.83)(13.28,-4.57)(13.2,-4.13)(12.8,-4.13)
\psdots[dotsize=0.2,dotstyle=x](12.82,-4.87)
\psdots[dotsize=0.2,dotstyle=x](12.78,-3.71)
\psdots[dotsize=0.12](1.58,-3.09)
\psdots[dotsize=0.12](3.2,-3.09)
\psdots[dotsize=0.12](3.22,-4.49)
\psdots[dotsize=0.12](1.58,-4.51)
\psline[linewidth=0.04cm](1.58,-3.11)(1.58,-4.49)
\psline[linewidth=0.04cm](1.56,-4.49)(3.18,-4.47)
\psline[linewidth=0.04cm](3.22,-4.47)(3.2,-3.09)
\psline[linewidth=0.04cm](3.18,-3.09)(1.6,-3.07)
\psdots[dotsize=0.12](2.42,-2.11)
\psdots[dotsize=0.12](4.22,-3.81)
\psdots[dotsize=0.12](0.64,-3.77)
\psdots[dotsize=0.12](2.4,-5.39)
\psline[linewidth=0.04cm](1.58,-3.05)(2.38,-2.13)
\psline[linewidth=0.04cm](2.42,-2.09)(3.2,-3.09)
\psline[linewidth=0.04cm](3.2,-3.09)(4.2,-3.79)
\psline[linewidth=0.04cm](4.22,-3.81)(3.22,-4.49)
\psline[linewidth=0.04cm](3.22,-4.49)(2.42,-5.37)
\psline[linewidth=0.04cm](2.38,-5.37)(1.58,-4.51)
\psline[linewidth=0.04cm](1.56,-4.51)(0.64,-3.79)
\psline[linewidth=0.04cm](0.64,-3.75)(1.56,-3.07)
\psbezier[linewidth=0.05,arrowsize=0.12cm 2.0,arrowlength=1.4,arrowinset=0.4]{->}(2.56,-1.63)(2.7275982,-1.41)(2.9197788,-1.1893519)(2.9398894,-0.87)(2.96,-0.55064815)(2.82,-0.27)(2.56,0.01)
\psline[linewidth=0.04cm,linestyle=dotted,dotsep=0.16cm](12.56,4.63)(12.62,0.15)
\end{pspicture} 
}
\end{center}
\caption{Example of a construction of a quotient map with two axial faces.}
\label{exquograph}
\end{figure}
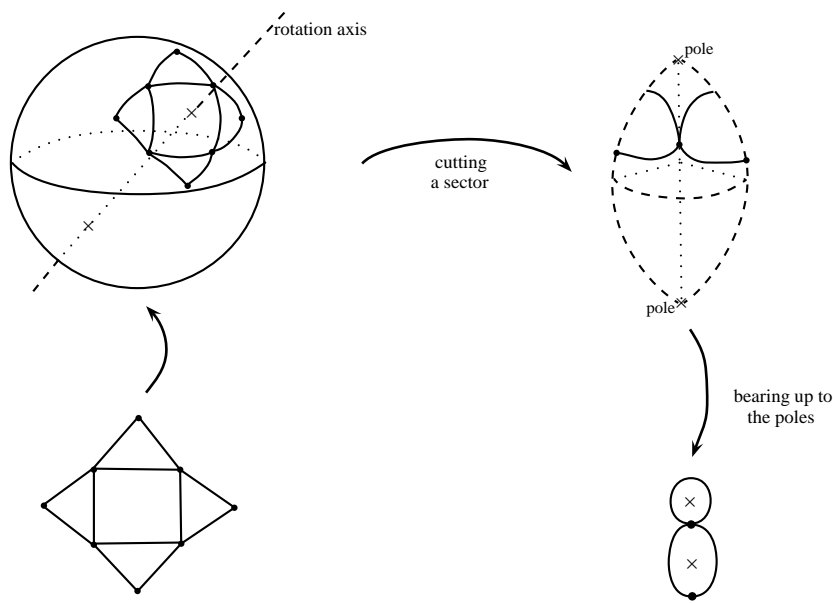

Conversely, we want to reconstruct the initial planar map $A$ knowing its quotient map $B$. This construction is a simple one from the moment when we know the axial cells of $B$ and the order $l$ of the rotation used to create the quotient map. In fact, by knowing these informations, we can represent on the sphere the quotient map $B$ such that the two axial cells of $B$ are placed on the north and south poles of the sphere. Then by cutting the sphere relative to a half-circle from the north pole to the south pole, we can open the sphere until obtaining a sphere sector with angle $2 \pi / l$. Finally, by glueing $l$ copies of this sphere sector, we construct a planar map which is exactly the map $A$. Figure \ref{figexcpmfromq} shows an example of this construction.

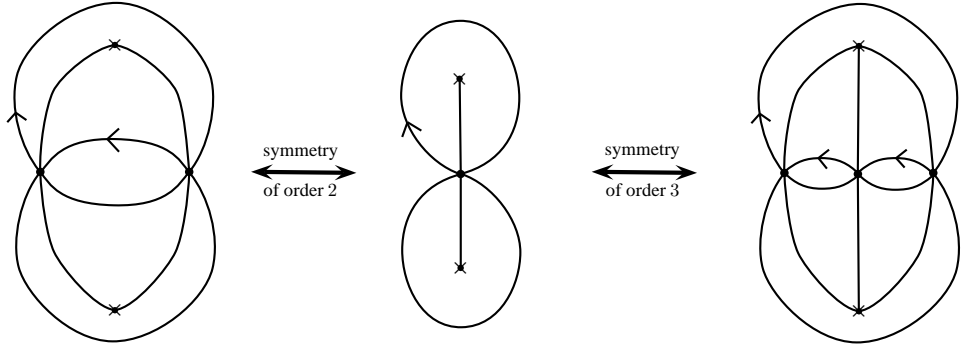
\begin{figure}[ht]
\centering
\scalebox{0.7} 
{
\begin{pspicture}(0,-3.24409)(18.001717,3.2440898)
\psdots[dotsize=0.24,dotstyle=x](8.5,1.77591)
\psdots[dotsize=0.12](8.5,1.77591)
\psdots[dotsize=0.24,dotstyle=x](8.52,-1.80409)
\psdots[dotsize=0.12](8.52,-1.80409)
\psdots[dotsize=0.16](8.52,-0.02408999)
\psline[linewidth=0.04cm](8.5,1.75591)(8.52,-0.00408999)
\psline[linewidth=0.04cm](8.52,-0.00408999)(8.52,-1.78409)
\psbezier[linewidth=0.04](8.5,-0.02408999)(7.92,0.17591001)(7.4,0.85591)(7.4,1.45591)(7.4,2.05591)(7.760635,2.8602848)(8.5,2.87591)(9.239366,2.8915353)(9.62,2.07591)(9.62,1.41591)(9.62,0.75591004)(9.28,0.17591001)(8.52,-0.02408999)
\psbezier[linewidth=0.04](8.54,-0.028464675)(9.12,-0.22846468)(9.64,-0.90846467)(9.64,-1.5084647)(9.64,-2.1084647)(9.279366,-2.9128394)(8.54,-2.9284647)(7.800635,-2.94409)(7.42,-2.1284647)(7.42,-1.4684647)(7.42,-0.80846465)(7.76,-0.22846468)(8.52,-0.028464675)
\psline[linewidth=0.08cm,arrowsize=0.133cm 2.0,arrowlength=1.4,arrowinset=0.4]{<->}(10.98,0.015910009)(12.96,-0.00408999)
\psdots[dotsize=0.24,dotstyle=x](16.0,2.39591)
\psdots[dotsize=0.12](16.0,2.39591)
\psdots[dotsize=0.24,dotstyle=x](16.0,-2.62409)
\psdots[dotsize=0.12](16.0,-2.62409)
\psdots[dotsize=0.16](15.98,-0.02408999)
\psdots[dotsize=0.16](17.4,-0.00408999)
\psdots[dotsize=0.16](14.6,-0.00408999)
\psline[linewidth=0.04cm](16.0,2.39591)(15.98,-0.04408999)
\psline[linewidth=0.04cm](15.98,0.015910009)(16.0,-2.64409)
\psbezier[linewidth=0.04](14.6,0.015910009)(14.84,0.21591)(15.0,0.27591002)(15.3,0.27591002)(15.6,0.27591002)(15.8,0.17591001)(16.0,-0.02408999)
\psbezier[linewidth=0.04](16.0,0.015910009)(16.24,0.21591)(16.4,0.27591002)(16.7,0.27591002)(17.0,0.27591002)(17.2,0.17591001)(17.4,-0.02408999)
\psbezier[linewidth=0.04](16.0,-0.04408999)(15.76,-0.24408999)(15.6,-0.30409)(15.3,-0.30409)(15.0,-0.30409)(14.8,-0.20408998)(14.6,-0.00408999)
\psbezier[linewidth=0.04](17.38,-0.06408999)(17.14,-0.26409)(16.98,-0.32409)(16.68,-0.32409)(16.38,-0.32409)(16.18,-0.22409)(15.98,-0.02408999)
\psbezier[linewidth=0.04](14.58,0.015910009)(14.24,0.43591002)(14.014061,1.2038589)(14.16,1.81591)(14.305939,2.427961)(15.222327,3.2040899)(16.04,3.19591)(16.857674,3.18773)(17.658283,2.3157127)(17.8,1.79591)(17.941717,1.2761073)(17.76,0.43591002)(17.4,-0.00408999)
\psbezier[linewidth=0.04](17.41578,-0.03591001)(17.75578,-0.45591)(17.981718,-1.223859)(17.83578,-1.83591)(17.68984,-2.447961)(16.77345,-3.22409)(15.955778,-3.21591)(15.138105,-3.20773)(14.337496,-2.3357127)(14.195779,-1.81591)(14.054061,-1.2961073)(14.235779,-0.45591)(14.595778,-0.015910009)
\psline[linewidth=0.04cm](7.42,0.67591)(7.52,0.95591)
\psline[linewidth=0.04cm](7.52,0.95591)(7.78,0.79591)
\psline[linewidth=0.04cm](15.24,0.27591002)(15.4,0.07591001)
\psline[linewidth=0.04cm](15.24,0.27591002)(15.44,0.43591002)
\psline[linewidth=0.04cm](16.86,0.09591001)(16.7,0.27591002)
\psline[linewidth=0.04cm](16.7,0.27591002)(16.9,0.43591002)
\psline[linewidth=0.04cm](13.98,0.93591)(14.14,1.11591)
\psline[linewidth=0.04cm](14.14,1.11591)(14.34,0.97591)
\psbezier[linewidth=0.04](16.0,2.39591)(15.62,2.35591)(14.98,1.81591)(14.84,1.51591)(14.7,1.21591)(14.58,0.47591)(14.6,-0.00408999)
\psbezier[linewidth=0.04](16.0,-2.62409)(16.38,-2.58409)(17.0,-1.82409)(17.14,-1.5240899)(17.28,-1.22409)(17.38,-0.52409)(17.4,-0.00408999)
\psbezier[linewidth=0.04](16.0,2.39591)(16.38,2.35591)(17.02,1.81591)(17.16,1.51591)(17.3,1.21591)(17.36,0.47591)(17.4,-0.02408999)
\psbezier[linewidth=0.04](16.0,-2.62409)(15.62,-2.58409)(15.0,-1.82409)(14.86,-1.5240899)(14.72,-1.22409)(14.62,-0.52409)(14.6,-0.00408999)
\psline[linewidth=0.08cm,arrowsize=0.133cm 2.0,arrowlength=1.4,arrowinset=0.4]{<->}(4.58,0.015910009)(6.56,-0.00408999)
\psdots[dotsize=0.24,dotstyle=x](2.02,2.41591)
\psdots[dotsize=0.12](2.02,2.41591)
\psdots[dotsize=0.24,dotstyle=x](2.02,-2.60409)
\psdots[dotsize=0.12](2.02,-2.60409)
\psdots[dotsize=0.16](3.42,0.015910009)
\psdots[dotsize=0.16](0.62,0.015910009)
\psbezier[linewidth=0.04](0.6,0.03591001)(0.26,0.45591)(0.034060992,1.223859)(0.18,1.83591)(0.325939,2.447961)(1.242327,3.22409)(2.06,3.21591)(2.8776731,3.20773)(3.6782825,2.3357127)(3.82,1.81591)(3.9617174,1.2961073)(3.78,0.45591)(3.42,0.015910009)
\psbezier[linewidth=0.04](3.4357784,-0.015910009)(3.7757785,-0.43591002)(4.0017176,-1.2038589)(3.8557785,-1.81591)(3.7098393,-2.427961)(2.7934515,-3.2040899)(1.9757785,-3.19591)(1.1581054,-3.18773)(0.35749587,-2.3157127)(0.21577844,-1.79591)(0.07406099,-1.2761073)(0.25577843,-0.43591002)(0.61577845,0.00408999)
\psline[linewidth=0.04cm](0.0,0.95591)(0.16,1.13591)
\psline[linewidth=0.04cm](0.16,1.13591)(0.36,0.99591)
\psbezier[linewidth=0.04](2.02,2.41591)(1.64,2.37591)(1.0,1.83591)(0.86,1.53591)(0.72,1.23591)(0.6,0.49591002)(0.62,0.015910009)
\psbezier[linewidth=0.04](2.02,-2.60409)(2.4,-2.56409)(3.02,-1.80409)(3.16,-1.50409)(3.3,-1.20409)(3.4,-0.50409)(3.42,0.015910009)
\psbezier[linewidth=0.04](2.02,2.41591)(2.4,2.37591)(3.04,1.83591)(3.18,1.53591)(3.32,1.23591)(3.38,0.49591002)(3.42,-0.00408999)
\psbezier[linewidth=0.04](2.02,-2.60409)(1.64,-2.56409)(1.02,-1.80409)(0.88,-1.50409)(0.74,-1.20409)(0.64,-0.50409)(0.62,0.015910009)
\psbezier[linewidth=0.04](0.62,-0.00408999)(0.82,0.25591)(1.0403296,0.66157997)(2.04,0.63591003)(3.0396705,0.61024004)(3.24,0.25591)(3.42,-0.00408999)
\psbezier[linewidth=0.04](3.42,0.021579992)(3.22,-0.23842001)(2.9996705,-0.64409)(2.0,-0.61842)(1.0003295,-0.59275)(0.8,-0.23842001)(0.62,0.021579992)
\psline[linewidth=0.04cm](2.1,0.39591002)(1.88,0.63591003)
\psline[linewidth=0.04cm](1.88,0.63591003)(2.1,0.83591)
\usefont{T1}{ptm}{m}{n}
\rput(5.510156,0.36591002){symmetry}
\usefont{T1}{ptm}{m}{n}
\rput(5.5064063,-0.33409){of order $2$}
\usefont{T1}{ptm}{m}{n}
\rput(11.930157,0.38591){symmetry}
\usefont{T1}{ptm}{m}{n}
\rput(11.938907,-0.33409){of order $3$}
\end{pspicture}  
}
\caption{Example of a quotient map with two axial vertices and its associated $2$-rooted map (left) and $3$-rooted map (right).}
\label{figexcpmfromq}
\end{figure}

The only exception of this construction is when the quotient map has an axial half-edge. In this case, the order $l$ of the symmetry is fixed at $2$. See Figure \ref{figex2rmquo} for an example. It's for this reason that for our enumeration we will distinguished two case : the quotient map having an axial half-edge, or the quotient map without an axial half-edge. The first case is often more difficult to enumerate.

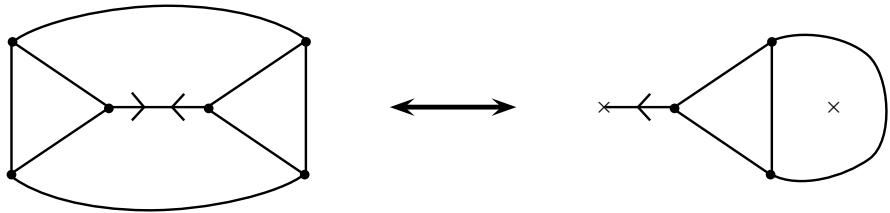
\begin{figure}[ht]
\centering
\scalebox{0.8} 
{
\begin{pspicture}(0,-1.7250745)(14.62,1.7250744)
\psdots[dotsize=0.16](0.1,1.0949255)
\psdots[dotsize=0.16](0.08,-1.1050745)
\psdots[dotsize=0.16](1.68,-0.0050745266)
\psline[linewidth=0.04cm](0.08,1.0949255)(0.08,-1.0850745)
\psline[linewidth=0.04cm](0.1,1.0949255)(1.68,0.014925473)
\psline[linewidth=0.04cm](0.08,-1.0850745)(1.7,0.014925473)
\psdots[dotsize=0.16,dotangle=-180.0](4.9,-1.1050745)
\psdots[dotsize=0.16,dotangle=-180.0](4.92,1.0949255)
\psdots[dotsize=0.16,dotangle=-180.0](3.32,-0.0050745266)
\psline[linewidth=0.04cm](4.92,-1.1050745)(4.92,1.0749254)
\psline[linewidth=0.04cm](4.9,-1.1050745)(3.32,-0.025074527)
\psline[linewidth=0.04cm](4.92,1.1149255)(3.3,0.014925473)
\psline[linewidth=0.04cm](1.68,0.014925473)(3.3,0.014925473)
\psbezier[linewidth=0.04](0.1,1.0949255)(0.5,1.4149255)(1.7000515,1.7050745)(2.7,1.6949254)(3.6999485,1.6847764)(4.5,1.4549254)(4.92,1.1349255)
\psbezier[linewidth=0.04](4.9,-1.0949255)(4.5,-1.4149255)(3.2999485,-1.7050745)(2.3,-1.6949254)(1.3000515,-1.6847764)(0.5,-1.4549254)(0.08,-1.1349255)
\psline[linewidth=0.04cm](2.06,-0.20507453)(2.26,0.014925473)
\psline[linewidth=0.04cm](2.26,0.014925473)(2.06,0.25492546)
\psline[linewidth=0.04cm](2.92,-0.20507453)(2.72,0.014925473)
\psline[linewidth=0.04cm](2.72,0.014925473)(2.92,0.23492548)
\psline[linewidth=0.08cm,arrowsize=0.133cm 2.0,arrowlength=1.4,arrowinset=0.4]{<->}(6.3,0.014925473)(8.38,0.014925473)
\psdots[dotsize=0.16,dotangle=-180.0](12.56,-1.1050745)
\psdots[dotsize=0.16,dotangle=-180.0](12.58,1.0949255)
\psdots[dotsize=0.16,dotangle=-180.0](10.98,-0.0050745266)
\psline[linewidth=0.04cm](12.58,-1.1050745)(12.58,1.0749254)
\psline[linewidth=0.04cm](12.56,-1.1050745)(10.98,-0.025074527)
\psline[linewidth=0.04cm](12.6,1.1149255)(10.98,0.014925473)
\psline[linewidth=0.04cm](9.84,0.014925473)(10.96,0.014925473)
\psline[linewidth=0.04cm](10.58,-0.20507453)(10.38,0.014925473)
\psline[linewidth=0.04cm](10.38,0.014925473)(10.58,0.23492548)
\psdots[dotsize=0.2,dotstyle=x](9.82,0.014925473)
\psdots[dotsize=0.2,dotstyle=x](13.6,0.014925473)
\psbezier[linewidth=0.04](12.58,1.1149255)(13.06,1.3149254)(13.8,1.1949254)(14.16,0.8749255)(14.52,0.5549255)(14.6,-0.5450745)(14.18,-0.84507453)(13.76,-1.1450745)(13.04,-1.3450745)(12.58,-1.1050745)
\end{pspicture} 
}
\caption{Example of a $2$-rooted map and its quotient map.}
\label{figex2rmquo}
\end{figure}

\vspace{2mm}

\textbf{Rk:} The notion of $l$-rooted map can be defined for a bracketing representation. In fact, we say that a bracketing representation of $n$ elements is $l$-rooted if it is constructed by copying the same string of $n/l$ elements $l$ times. The quotient associated to this bracketing is simply the string of $n/l$ elements.

\vspace{2mm}

Now, thanks to this concept of quotient maps, we can rewrite Theorem \ref{thenmap} as follows:

\begin{thenmapnr}
\label{thenmapnr}
(Liskovets)
\begin{displaymath}
M^+(n) = \frac{1}{2n} \left[ M'(n) + \sum_{l \geq 2 \atop l | 2n}{\varphi(l).M'_{l}(n)} + M'_{e}(n) \right], \quad n \geq 2
\end{displaymath}
where $M'(n)$ is the number of rooted maps of $\mathcal{M}(n)$, $M'_{e}(n)$ is the number of rooted quotient maps from $2$-rooted maps of $\mathcal{M}(n)$ with an axial half-edge, and $M'_{l}(n)$ is the number of rooted quotient maps from $l$-rooted maps without axial half-edge.
\end{thenmapnr}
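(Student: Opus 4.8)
The plan is to obtain the statement as a reorganisation of the formula in Theorem~\ref{thenmap}, re-expressing each term $M^{(+,l)}(n)$ by means of the quotient-map construction developed above. First I would split off the term $l=1$ from the sum $\sum_{l\mid 2n}\varphi(l)M^{(+,l)}(n)$: a $1$-rooted map is nothing but a rooted map (the trivial symmetry of order one), so $M^{(+,1)}(n)=M'(n)$, and since $\varphi(1)=1$ this term contributes exactly $M'(n)/(2n)$.

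Next, for each fixed integer $l\geq 2$ dividing $2n$, I would show that $M^{(+,l)}(n)$ equals the number of rooted quotient maps of $l$-rooted maps of $\mathcal{M}(n)$, each decorated with its two axial cells. For this it suffices to check that the assignment sending an $l$-rooted map $A$ (up to orientation-preserving homeomorphism) to its quotient map $B$ is a bijection onto its image; surjectivity onto the image is a tautology, well-definedness is the independence of the cutting sector (which follows from $A$ having a symmetry of order $l$), and injectivity follows because the reconstruction recalled above is a left inverse: knowing $l$ and the axial cells of $B$, one opens $B$ along a pole-to-pole half-circle and glues $l$ copies of the resulting sector, recovering $A$ uniquely. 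I would then partition these quotient maps according to whether or not they carry an axial half-edge. As observed just before the statement, an axial half-edge forces $l=2$; hence for $l\geq 3$ no axial half-edge occurs and $M^{(+,l)}(n)=M'_l(n)$, whereas for $l=2$ the quotients split as $M^{(+,2)}(n)=M'_2(n)+M'_e(n)$. (The edge-count constraints $l\mid n$ for $l\geq 3$, and the parity of $n$ in the two $l=2$ subcases, make the relevant counts automatically vanish when they must, so no extra care is needed there.)

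Substituting into Theorem~\ref{thenmap} and using $\varphi(1)=\varphi(2)=1$ to absorb the term $M'_e(n)$ (here $2\mid 2n$ always), one gets
\[
M^+(n)=\frac{1}{2n}\Big[M'(n)+\varphi(2)\big(M'_2(n)+M'_e(n)\big)+\!\!\sum_{l\geq 3,\ l\mid 2n}\!\!\varphi(l)M'_l(n)\Big]=\frac{1}{2n}\Big[M'(n)+\!\!\sum_{l\geq 2,\ l\mid 2n}\!\!\varphi(l)M'_l(n)+M'_e(n)\Big],
\]
which is the claimed identity. The bookkeeping with $\varphi$ is routine; the step I expect to require the most care is verifying that the quotient construction and its reconstruction are genuinely mutually inverse in the present generalised setting of maps with half-edges — in particular, that when the axial cell is a half-edge no information is lost and the reconstructed map has a symmetry of order exactly $2$ with two marked roots. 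Since, as noted, Liskovets' arguments transfer verbatim and $\mathcal{M}$ is closed under these operations, this presents no essential new difficulty.
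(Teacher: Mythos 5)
Your proposal is correct and takes essentially the same route as the paper: the paper states this result as Liskovets' theorem and justifies it only through the preceding discussion of quotient maps (an $l$-rooted map corresponds bijectively to its rooted quotient map with marked axial cells, an axial half-edge forcing $l=2$), which is exactly the correspondence you verify before substituting into Theorem \ref{thenmap}. You merely make explicit the sector-independence and reconstruction argument that the paper defers to the cited references of Liskovets, noting as the paper does that the presence of half-edges changes nothing in the demonstration.
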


\subsection{A first application: the generic case.}

In this subsection, we will use the example of structurally stable polynomial vector fields described by A.Douady, F.Estrada and P.Sentenac in \cite{DES} to explain Theorem \ref{thenmapnr} in a simple case. In fact, in the case the bracketing problem is equivalent to the enumeration of rooted tree, also the number $\sigma_n$ of topologically equivalent phase portrait of generic complex polynomial differential equation (of degree $d=n+1$) is equal to the number $T^{+}(n)$ of unrooted tree (with $n$ edges). So, in order to prove Theorem \ref{mainth1}, we have to prove first that

\begin{displaymath}
\sigma_n = T^{+}(n) = \frac{1}{2n} \left[ \frac{1}{n+1} \left( \! \begin{array}{c} 2n \\ n \end{array} \! \right) + \sum_{l \geq 2 \atop l|n}{\varphi(l) \left( \! \begin{array}{c} 2n/l \\ n/l \end{array} \! \right)} + \left\{ \begin{array}{cc}
\left( \! \begin{array}{c} n \\ \frac{n-1}{2} \end{array} \! \right) & \text{if } n \text{ is odd} \\
& \\
0 & \text{if } n \text{ is even}
\end{array} \right. \right] .
\end{displaymath}

This result can also be found in \cite{Liskexen}. Notice also that we obtain the same result in the case of complex polynomial vector fields where all equilibrium points are centers. the interested readers can refer to the article \cite{AGP} for more details about this particular case.

\vspace{3mm}

Now let us solve the enumeration problem of unrooted trees using Theorem \ref{thenmapnr}. This theorem applied to our situation gives that:

\begin{displaymath}
T^+(n) = \frac{1}{2n} \left[ T'(n) + \sum_{l \geq 2 \atop l | 2n}{\varphi(l).T'_{l}(n)} + T'_{e}(n) \right], \quad n \geq 2,
\end{displaymath}
where $T'(n)$ is the number of rooted trees, $T'_{e}(n)$ is the number of rooted quotient trees with an axial half-edge, and $T'_{l}(n)$ is the number of rooted quotient trees without axial half-edge obtained thanks to a symmetry of order $l$.

The enumeration of rooted trees are obtained a long time ago, and the solution of this problem is given by the numbers of Catalan (see \cite{catalan} for more details). So, 

\begin{displaymath}
T'(n) = \frac{1}{n+1} \left( \! \begin{array}{c} 2n \\ n \end{array} \! \right).
\end{displaymath}

It remains to determinate the values $T'_{l}(n)$ and $T'_{e}(n)$. For that, we need to understand the quotient maps obtain from trees, and more particularly the set of pair of axial cells involved in this quotient action. In a general study, there are six different pairs:

\begin{itemize}
\item the two axial cells are vertices (see Figure \ref{figexcpmfromq}).
\item the two axial cells are edges.
\item the two axial cells are faces (see Figure \ref{exquograph}).
\item the two axial cells are a vertex and an edge.
\item the two axial cells are a vertex and a face (see Figure \ref{figex3rplmap}).
\item the two axial cells are an edge and a face (see Figure \ref{figex2rmquo}).
\end{itemize}

Our situation is easier. In fact, suppose that the two axial cells are vertices, then by connectedness of a tree, there is a path between these two vertices. But by symmetry of the map, if such a path exists, it must exist in as many copies as the order of the symmetry (therefore at least twice). This implies that the map has at least two faces which is in contradiction with the definition of a tree. In conclusion, the two axial cells can not be both vertices.

By a similar argument, we prove that the four first situations described earlier are impossible. So, the only cases we have to consider here are the pairs vertex-face and edge-face. Now study each case separately.

\begin{enumerate}
\item[1.] The two axial cells are a vertex and a face.

In this case, it's simple to see that the quotient map of a $l$-rooted tree with $n$ edges is a rooted tree with $n/l$ edges and a distinguished face and vertex (the axial ones). In conclusion,

\begin{displaymath}
T'_{l}(n) = \frac{1}{n/l +1} \left( \! \begin{array}{c} 2n/l \\ n/l \end{array} \! \right) \times (n/l +1) = \left( \! \begin{array}{c} 2n/l \\ n/l \end{array} \! \right).
\end{displaymath}

\item[2.] The two axial cells are an edge and a face.

In this case, we must consider a $2$-rooted tree with $n$ edges. The quotient action will divide an edge and share the other $n-1$ edges into two equal sets. So, if $n$ is even, this quotient is impossible, and $T'_{e}(n) = 0$. Now, we suppose that $n$ is odd, then two situations can appear:
\begin{itemize}
\item the axial edge is not the root edge. In this case, the quotient map is simply a rooted tree with $(n-1)/2$ edges with an added half-edge (the axial one).
\item the axial edge is the root edge. In this case, the quotient map is a tree with $(n-1)/2$ edges and a rooted half-edge. By rooted the first edge to the left of the rooted half-edge, preserving the root vertex, we construct a bijection between these quotient maps and the set of rooted trees with $(n-1)/2$ edges (see Figure \ref{figexbtree} for an example).
\end{itemize}

So, if $n$ is odd,

\begin{eqnarray*}
T'_{e}(n) & = & \frac{1}{(n-1)/2 + 1} \left( \! \begin{array}{c} n-1 \\ (n-1)/2 \end{array} \! \right) \times (n-1) + \frac{1}{(n-1)/2 + 1} \left( \! \begin{array}{c} n-1 \\ (n-1)/2 \end{array} \! \right) \\
& = & \left( \! \begin{array}{c} n \\ (n-1)/2 \end{array} \! \right).
\end{eqnarray*}

\end{enumerate}

This completes the enumeration of unrooted tree with $n$ edges.

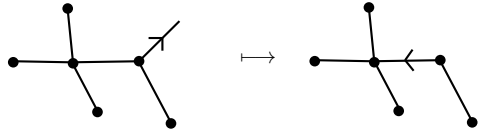
\begin{figure}[ht]
\centering
\scalebox{0.7} 
{
\begin{pspicture}(0,-1.18)(8.76,1.18)
\psdots[dotsize=0.2](1.08,1.08)
\psdots[dotsize=0.2](0.06,0.06)
\psdots[dotsize=0.2](1.18,0.04)
\psdots[dotsize=0.2](2.42,0.08)
\psdots[dotsize=0.2](1.64,-0.88)
\psdots[dotsize=0.2](3.02,-1.1)
\psline[linewidth=0.04cm](0.08,0.08)(1.14,0.06)
\psline[linewidth=0.04cm](1.14,0.06)(2.42,0.08)
\psline[linewidth=0.04cm](1.18,0.06)(1.08,1.06)
\psline[linewidth=0.04cm](1.18,0.02)(1.64,-0.86)
\psline[linewidth=0.04cm](2.4,0.1)(3.02,-1.08)
\psline[linewidth=0.04cm](2.42,0.08)(3.18,0.84)
\psline[linewidth=0.04cm](2.86,0.52)(2.86,0.28)
\psline[linewidth=0.04cm](2.86,0.52)(2.6,0.52)
\psdots[dotsize=0.2](6.74,1.1)
\psdots[dotsize=0.2](5.72,0.08)
\psdots[dotsize=0.2](6.84,0.06)
\psdots[dotsize=0.2](8.08,0.1)
\psdots[dotsize=0.2](7.3,-0.86)
\psdots[dotsize=0.2](8.68,-1.08)
\psline[linewidth=0.04cm](5.74,0.1)(6.8,0.08)
\psline[linewidth=0.04cm](6.8,0.08)(8.08,0.1)
\psline[linewidth=0.04cm](6.84,0.08)(6.74,1.08)
\psline[linewidth=0.04cm](6.86,0.04)(7.32,-0.84)
\psline[linewidth=0.04cm](8.06,0.12)(8.68,-1.06)
\psline[linewidth=0.04cm](7.42,0.1)(7.56,0.3)
\psline[linewidth=0.04cm](7.42,0.1)(7.58,-0.1)
\usefont{T1}{ptm}{m}{n}
\rput(4.657344,0.115){\large $\longmapsto$}
\end{pspicture} 
}
\caption{Example of equivalence between a tree with a rooted half-edge and a rooted tree.}
\label{figexbtree}
\end{figure}

Now, to complete the proof of Theorem \ref{thenmapnr}, we just need to prove the following result:

\begin{cortree}
\begin{displaymath}
\lim_{n \rightarrow +\infty}{(\sigma_{n})^{1/n}} = 4.
\end{displaymath}
\end{cortree}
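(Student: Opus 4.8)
The plan is to bound $\sigma_n$ above and below by quantities of the shape $(\text{polynomial in }n)\cdot 4^n$, and then conclude by taking $n$-th roots, using that $\bigl(p(n)\bigr)^{1/n}\to 1$ for any fixed polynomial $p$ with $p(n)>0$. Since every summand inside the bracket defining $\sigma_n$ is non-negative, only the leading Catalan term $\frac{1}{n+1}\binom{2n}{n}$ should matter asymptotically, and the task is just to check that the two remaining contributions are of strictly smaller exponential order.

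For the lower bound I would simply discard all but the leading term: $\sigma_n\ge \frac{1}{2n(n+1)}\binom{2n}{n}$. Since $\binom{2n}{n}$ is the largest of the $2n+1$ coefficients $\binom{2n}{k}$, whose sum is $4^n$, one has $\binom{2n}{n}\ge \frac{4^n}{2n+1}$, hence $\sigma_n\ge \frac{4^n}{2n(n+1)(2n+1)}$ and $(\sigma_n)^{1/n}\ge 4\bigl(2n(n+1)(2n+1)\bigr)^{-1/n}\to 4$, giving $\liminf_{n\to\infty}(\sigma_n)^{1/n}\ge 4$. (Alternatively one could quote the classical asymptotic $\frac{1}{n+1}\binom{2n}{n}\sim 4^n/(\sqrt{\pi}\,n^{3/2})$, which gives the same.)

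For the upper bound I would estimate the three contributions separately. First, $\binom{2n}{n}\le 4^n$ for the same reason as above. Next, for $l\ge 2$ with $l\mid n$ we have $\binom{2n/l}{n/l}\le 2^{2n/l}\le 2^n$; there are at most $n$ such divisors and $\varphi(l)\le l\le n$, so $\sum_{l\ge 2,\,l\mid n}\varphi(l)\binom{2n/l}{n/l}\le n^2\,2^n$. Finally, the odd-$n$ correction satisfies $\binom{n}{(n-1)/2}\le 2^n$. Adding these, $\sigma_n\le \frac{1}{2n}\bigl(4^n+(n^2+1)2^n\bigr)$; since $(n^2+1)2^n\le 4^n$ once $n\ge 5$ (because $n^2+1\le 2^n$ there), this yields $\sigma_n\le \frac{4^n}{n}$ for all large $n$, so $(\sigma_n)^{1/n}\le 4\,n^{-1/n}\to 4$ and $\limsup_{n\to\infty}(\sigma_n)^{1/n}\le 4$. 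Combining the two inequalities gives $\lim_{n\to\infty}(\sigma_n)^{1/n}=4$.

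There is essentially no obstacle here: the only point that the argument really turns on — and hence the only one needing care — is that the divisor sum and the odd-$n$ term are of exponential order $2^n$, strictly below the order $4^n$ of the leading Catalan term, so they cannot affect the growth rate; every other factor that appears is sub-exponential and disappears under $(\cdot)^{1/n}$.
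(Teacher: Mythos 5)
Your proof is correct, and at its core it is the same sandwich argument as the paper's: trap $\sigma_n$ between polynomial multiples of $4^n$ and take $n$-th roots. The execution differs in two minor but worthwhile ways. The paper asserts the two-sided bound $\frac{1}{2n}\frac{1}{n+1}\binom{2n}{n} \leq \sigma_n \leq \frac{1}{n+1}\binom{2n}{n}$ as ``easy to prove'' from the formula and then invokes Stirling to get $\frac{1}{n+1}\binom{2n}{n} \sim \frac{4^n}{\sqrt{\pi}\, n^{3/2}}$; the upper inequality there is really the combinatorial fact that unrooted trees are no more numerous than rooted ones (or, equivalently, a term-by-term check of the kind you carry out). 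You instead make that check explicit, bounding the divisor sum by $n^2 2^n$ (using $\varphi(l)\leq l\leq n$, at most $n$ divisors, and $\binom{2n/l}{n/l}\leq 2^n$ for $l\geq 2$) and the odd-$n$ term by $2^n$, and you replace Stirling by the elementary estimate $\frac{4^n}{2n+1}\leq\binom{2n}{n}\leq 4^n$. The net effect is a fully self-contained, purely elementary verification of exactly the point the paper glosses over, at the cost of slightly cruder (but entirely sufficient) constants; both routes give $\liminf \geq 4$ and $\limsup \leq 4$ and hence the claimed limit.
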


\begin{proof}
This corollary is a direct consequence of the last result. In fact, from the formula of $\sigma_n$, it's easy to prove that
\begin{displaymath}
\frac{1}{2n} \frac{1}{n+1}  \left( \! \begin{array}{c} 2n \\ n \end{array} \! \right) \leq \sigma_{n} \leq \frac{1}{n+1}  \left( \! \begin{array}{c} 2n \\ n \end{array} \! \right),
\end{displaymath}
and so 
\begin{displaymath}
\lim_{n \rightarrow +\infty}{(\sigma_{n})^{1/n}} = \lim_{n \rightarrow +\infty}{\left( \frac{1}{n+1}  \left( \! \begin{array}{c} 2n \\ n \end{array} \! \right) \right)^{1/n}}.
\end{displaymath}
Then, by using the Stirling formula, we can show that
\begin{displaymath}
\frac{1}{n+1}  \left( \! \begin{array}{c} 2n \\ n \end{array} \! \right) \sim \frac{1}{\sqrt{\pi}} \frac{4^n}{n^{3/2}},
\end{displaymath}
in the sense that the quotient of the two terms tends to $1$ as $n$ tends to $\infty$. From this equivalence, we deduce the required result.
\end{proof}

\subsection{Enumeration of rooted generalized trees.}

As in the previous example, we will use Theorem \ref{thenmapnr} to give an enumeration in the general case. But to use Theorem \ref{thenmapnr}, we need to enumerate the set of rooted generalized trees (or equivalently the set of valid bracketing). We need to use the Lagrange-B\"{u}rmann inversion theorem (\ref{thlbinv}), to demonstrate the following lemma:

\begin{lemvalbr}
\label{lemvalbr}
Denote by $b_n$ the number of valid bracketings in a string of $n$ elements. Then,
\begin{eqnarray*}
b_{2n} \quad = \quad p_n & = & \sum_{k \geq 0}{\frac{(-2n)_k (-n)_k}{(2)_k} \frac{2^k}{k!}}, \quad n \geq 0 \\
b_{2n-1} \quad = \quad q_n & = & \frac{1}{n} \sum_{k=0}^{n-1}{\left( \begin{array}{c} 2n \\ k \end{array} \right) \left( \begin{array}{c} n \\ n-1-k \end{array} \right) 2^k}, \quad n\geq 1,
\end{eqnarray*}
with $(x)_n = x.(x+1) \ldots (x+n-1)$, and $(x)_0 = 1$.
\end{lemvalbr}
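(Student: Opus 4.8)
The plan is to set up a system of algebraic functional equations for the generating functions enumerating valid bracketings, decorated according to the type of the outermost structure, and then extract coefficients via the Lagrange--B\"urmann inversion theorem (Theorem \ref{thlbinv}). First I would introduce auxiliary classes: let $R$ (resp. $S$) be the generating function for valid bracketings whose entire string is enclosed in a single pair of associated round (resp. square) parentheses, counted by the number of \emph{inner} elements, and let $B(x) = \sum_n b_n x^n$ be the full generating function. Condition 4 in the definition of valid bracketing forces the content between a pair of associated parentheses to be itself a balanced concatenation of dots, fully-nested round blocks and fully-nested square blocks; condition 3 constrains the parity of that content (exactly even in the even case, even-except-one in the odd case). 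This yields a decomposition of the form $B = (\text{sequence of atoms})$ where an atom is a dot $\bullet$, a round block $R$, or a square block $S$, and where $R$ and $S$ satisfy their own equations obtained by requiring the parity condition on the enclosed substring. The parity constraint is exactly what couples the even-index counts $p_n$ and odd-index counts $q_n$, so I would work with the even and odd parts of each series separately (equivalently, track a variable marking the number of dots modulo $2$, or split $B = B_{\mathrm{ev}} + B_{\mathrm{odd}}$).

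Next I would reduce this system to a single equation amenable to Lagrange inversion. The structure "sequence of three kinds of atoms, two of which are themselves bracketed sequences of the same kind" is a Catalan-like recursion; after eliminating $R$ and $S$ one expects an equation roughly of the shape $y = x\,\phi(y)$ (or $y = x\,\phi(y)$ with $\phi$ built from $1/(1-\cdot)$ factors coming from the SEQ constructions and a factor $2$ recording the choice between a round and a square block, which is the origin of the $2^k$ in the stated formulas). Concretely I anticipate that the relevant series $P(x) = \sum_{n\ge 0} p_n x^n = \sum b_{2n} x^n$ (the bracketings of even length, i.e.\ the ones actually attached to vector fields) satisfies an equation whose kernel, when Lagrange--B\"urmann is applied, produces
\[
p_n \;=\; [x^n]\,P(x) \;=\; \sum_{k\ge 0} \frac{(-2n)_k (-n)_k}{(2)_k}\,\frac{2^k}{k!},
\]
the coefficient sum being exactly the hypergeometric expansion $\,{}_2F_1$-type series that Lagrange inversion delivers for an algebraic function defined by a quadratic-in-$y$ relation with a $(1-y)$ denominator. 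The odd-length count $q_n = b_{2n-1}$ comes out of the same inversion applied to the "even-except-one" series (one distinguished dot, or equivalently $\partial/\partial(\text{dot variable})$ of the bracketed building block), giving the binomial double sum $\frac1n\sum_{k=0}^{n-1}\binom{2n}{k}\binom{n}{n-1-k}2^k$ after simplifying the Lagrange kernel.

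The main obstacle I expect is bookkeeping the parity conditions 3 correctly and showing that conditions 1, 2, 4 are automatically enforced by the chosen combinatorial decomposition — in particular verifying that the "balanced between associated parentheses" requirement (condition 4, separately for round and for square) is equivalent to saying the enclosed word is again a valid bracketing of the appropriate kind, so that the recursion really closes. A secondary technical point is the passage from the functional equation to the closed hypergeometric form: one must identify the correct $\phi$ in $y = x\phi(y)$ so that $[x^n] y = \frac1n [y^{n-1}] \phi(y)^n$ reproduces the Pochhammer products, which amounts to recognizing $\phi(y)^n$'s relevant coefficient as a terminating ${}_2F_1$. Once the functional equation is pinned down this last step is a routine, if slightly delicate, manipulation of binomial coefficients, so I would present the derivation of the functional equation in detail and then invoke Theorem \ref{thlbinv} and standard hypergeometric identities to finish.
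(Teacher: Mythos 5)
Your road map is the same as the paper's (translate the definition into algebraic equations for the even- and odd-length generating functions, then extract coefficients with Theorem \ref{thlbinv}), but the proposal stops exactly where the work lies. The paper needs no auxiliary classes $R,S$: reading the first symbol of the string gives at once the recursion $b_n = b_{n-1} + 2\sum_{2\alpha+\beta+2=n} b_{2\alpha}b_{\beta}$ (the factor $2$ being the round/square choice), hence the system $p = 1+q+2zp^2$, $q = zp+2zpq$; eliminating $p$ yields the single equation $q = z(1+q)(1+2q)^2$, and Lagrange--B\"urmann is applied to $q$ with $\phi(u)=(1+u)(1+2u)^2$, which gives the binomial double sum for $q_n$ immediately, the Pochhammer formula for $p_n$ then coming from the relation between $p$ and $q$ (the paper refers to \cite{Dias2} for it). You instead aim the inversion at $P$ directly and only ``anticipate'' that the kernel produces the stated sum; since $p$ satisfies a cubic, $4z^2p^3-4zp^2+(z+1)p-1=0$, and not an equation of the form $y=z\phi(y)$, that step is not routine as written, and the precise functional equations you would invert are never derived.

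The more serious gap is the odd-length bookkeeping, which you flag as the main obstacle but do not resolve. If ``even-except-one'' is implemented literally as ``at most one pair of associated parentheses may enclose an odd number of elements'', you count the wrong class: for strings of $3$ elements this admits $(\,\bullet\,)$ and $[\,\bullet\,]$ as well, giving $7$ objects, whereas the lemma asserts $q_2=5$. The class the formula actually counts allows the odd parity only at the outermost level: every associated pair must enclose an even-length valid bracketing, which is precisely what the paper's case distinction ``the opening parenthesis in position $1$ closes at an even position $j$'' encodes, and what forces the term $b_{2\alpha}$ (even index) in the recursion. Without fixing this, the functional equation you would obtain for the odd part counts a strictly larger class (closer to the quasi-valid numbers $c_m$ of the later sections) and cannot reproduce the stated $q_n$. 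So: right toolkit and the same overall strategy as the paper, but the two decisive steps, the exact equations (especially for the odd part) and the elimination that puts the system into Lagrange form, are missing.
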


\begin{proof}
Consider a valid bracketing in a string of $n$ elements, and read it, from the left to the right. Two different situations appear in the beginning:

\begin{enumerate}
\item[Case 1.] The first element in the string is a dot, and the bracketing representation composed by the others $n-1$ elements is a valid one.
\item[Case 2.] The first element in the string is a left square (resp. round) parenthesis, and so it is paired with a $j$-th element of the string (with $j$ even). Then, the bracketings representation construct thanks to the elements from $2$ to $j-1$ and the elements from $j+1$ to $n$ are valid bracketings too.
\end{enumerate}

So we deduce for these observations that $b_n$ satisfies the recursion equation
\begin{displaymath}
b_n = b_{n-1} + 2 \sum_{2 \alpha + \beta +2 = n}{b_{2\alpha} b_{\beta}}, \quad n\geq 1 \text{ and } \alpha , \beta \geq 0,
\end{displaymath}
where the first term of the sum is deducted from Case 1 and the second from Case 2. By convention, we set $b_0 = p_0 = 1$ and $q_0 = 0$. Now we consider the generating functions
\begin{eqnarray*}
p(z) & = & \sum_{n \geq 0}{p_n z^n}, \\
q(z) & = & \sum_{n \geq 0}{q_n z^n}.
\end{eqnarray*}
Then by the recursive formula, we deduce for $n \geq 1$
\begin{eqnarray*}
p_n \quad := \quad b_{2n} & = & b_{2n-1} + 2 \sum_{2 \alpha + \beta = 2n-2}{b_{2\alpha} b_{\beta}} \\
& = & b_{2n-1} + 2 \sum_{k=0}^{n-1}{b_{2k} b_{2(n-1-k)}} \\
& = & q_n + 2 \sum_{k=0}^{n-1}{p_k p_{n-1-k}}
\end{eqnarray*}

and

\begin{eqnarray*}
q_n := b_{2n-1} & = & b_{2n-2} + 2 \sum_{2 \alpha + \beta = 2n-3}{b_{2\alpha} b_{\beta}} \\
& = & b_{2(n-1)} + 2 \sum_{k=0}^{n-1}{b_{2k} b_{2(n-1-k)-1}} \\
& = & p_{n-1} + 2 \sum_{k=0}^{n-1}{p_k q_{n-1-k}}.
\end{eqnarray*}

So from these two equations, one can deduce that
\begin{eqnarray*}
p & = & 1 + q + 2zp^2, \\
q & = & zp + 2zpq.
\end{eqnarray*}

From the second equation, we obtain
\begin{displaymath}
p = \frac{q}{z(1+2q)} ,
\end{displaymath}

and so substituting this in the first equation, and after some simplifications, we get
\begin{displaymath}
q(1+2q) = z(1+q)(1+2q)^2 + 2q^2 ,
\end{displaymath}

or
\begin{displaymath}
q = z(1+q)(1+2q)^2.
\end{displaymath}

Then we use the following theorem:

\begin{thlbinv}
\label{thlbinv}
(Lagrange-B\"{u}rmann inversion theorem) Let $\phi(u)$ be a formal power series with $\phi_0 \neq 0$ and let $Y(z)$ be the unique formal power series solution of the equation $Y = z.\phi(Y)$. Then the coefficient of $Y(z)$ of order $n$, noted $[z^n]Y(z)$, is given by
\begin{displaymath}
[z^n]Y(z) = \frac{1}{n} [u^{n-1}] \phi(u)^n.
\end{displaymath}
\end{thlbinv}

By using this theorem in our situation, setting $Y(z) = q(z)$ and $\phi(Y) = (1+Y)(1+2Y)^2$, we get

\begin{eqnarray*}
q_n = [z^n]q(z) & = & \frac{1}{n} [u^{n-1}] \left( (1+u)(1+2u)^2 \right)^n \\
& = & \frac{1}{n} \sum_{k=0}^{n-1}{\left( \begin{array}{c} 2n \\ k \end{array} \right) 2^k \left( \begin{array}{c} n \\ n-1-k \end{array} \right)}.
\end{eqnarray*}

Thanks to the Lagrange-B\"{u}rmann inversion theorem and equations verified by the generating functions $p$ and $q$, we find also that

\begin{displaymath}
p_n = \sum_{k \geq 0}{\frac{(-2n)_k (-n)_k}{(2)_k} \frac{2^k}{k!}}.
\end{displaymath}

The formula of $p_n$ has been proved in the article of K.Dias \cite{Dias2}. 
\end{proof}

\subsection{Enumeration of unrooted generalized trees.}

In the following, in order to simplify the notation, we set
\begin{displaymath}
p_{n,k} = \frac{(-2n)_k (-n)_k}{(2)_k} \frac{2^k}{k!}.
\end{displaymath}
Notice that $p_{n,k}$ is equal to the number of rooted generalized tree with $k$ edges, $k+1$ vertices and $2(n-k)$ half-edges.

Now, we can prove the main result of this section:

\begin{thenpoly}
\label{thenpoly}
Denote by $p^+_n$ the number of topologically different complex polynomial vector fields of degree $d=n+1$, then 
\begin{displaymath}
p^+_n = \frac{1}{2n} \left[ b_{2n} + \sum_{l \geq 0 \atop l|2n} \varphi(l). \left\{ \begin{array}{cc}
\sum\limits_{k \geq 0}{p_{n/l,k}.(k+1)} & \text{ if } 2n/l \text{ even} \\
c_{2n/l} & \text{ if } 2n/l \text{ odd}
\end{array} \right. + \left\{ \begin{array}{cc}
0 & \text{ if } n \text{ even} \\
2n. b_{n-1} & \text{ if } n \text{ odd}
\end{array} \right. \right], \quad n \geq 1,
\end{displaymath}
with $c_m$ satisfying the recursive formula
\begin{eqnarray*}
c_m & = & c_{m-1} + 2\sum_{2\alpha + \beta + 2 = m}{b_{2\alpha} c_{\beta}} + 2\sum_{2\alpha + \beta + 3 = m}{c_{2\alpha +1} b_{\beta}}, \quad m\geq 1 \\
c_0 & = & 1.
\end{eqnarray*}
\end{thenpoly}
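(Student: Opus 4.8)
The proof follows the template of the generic case treated in Section~4.2, applied now to the full class of rooted generalized trees. By the bijections of Section~3, $p^+_n$ equals the number of unrooted generalized trees with $n$ edges, so Theorem~\ref{thenmapnr} gives
\begin{displaymath}
p^+_n = \frac{1}{2n}\Bigl[\, M'(n) + \sum_{l \geq 2,\ l \mid 2n} \varphi(l)\, M'_l(n) + M'_e(n) \,\Bigr],
\end{displaymath}
where $M'(n)$ counts rooted generalized trees, $M'_l(n)$ counts rooted quotient trees coming from $l$-rooted generalized trees without axial half-edge, and $M'_e(n)$ counts rooted quotient trees coming from $2$-rooted generalized trees with an axial half-edge. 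By Lemma~\ref{lemvalbr} one has $M'(n) = b_{2n}$, the first term in the bracket, so it remains to identify the other two.

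The next step is the classification of the possible pairs of axial cells, and here nothing changes from the generic case: a generalized tree has exactly one face, and the same connectedness/path argument used in Section~4.2 --- together with the observation that a half-edge has a vertex-end and a free end which a nontrivial rotation can neither interchange nor simultaneously fix --- shows that no two cells among the vertices, edges and half-edges can both be axial, and that a half-edge of the map is never axial. Hence the axial pair is either (a) one vertex and the unique face, for any admissible $l \geq 2$, or (b) one edge and the unique face, which forces $l = 2$ and makes the axial edge into an axial half-edge of the quotient. In case (a) the quotient of an $l$-rooted tree is a rooted generalized tree of ``size'' $n/l$ carrying a distinguished axial vertex. When $2n/l$ is even (equivalently $l \mid n$) this quotient is an ordinary generalized tree whose $k$-edge representatives have $k+1$ vertices, so $M'_l(n) = \sum_{k \geq 0} p_{n/l,k}\,(k+1)$. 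When $2n/l$ is odd --- equivalently $l \mid 2n$ but $l \nmid n$, which forces $l$ to be even --- the quotient has a bracketing of odd length $m = 2n/l$ with a single ``odd place'', and one shows that these quotients are enumerated by $c_m$.

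To obtain the recursion for $c_m$ I would rerun the left-to-right ``first symbol'' decomposition from the proof of Lemma~\ref{lemvalbr}, now keeping track of the position of the single odd place. Reading the bracketing from the left: either the first symbol is a dot, leaving a $c$-object on $m-1$ symbols (term $c_{m-1}$); or the first symbol is a left parenthesis --- round or square, whence the factor $2$ --- matched at position $j$, and then either the odd place lies outside this pair, so its interior is an ordinary valid bracketing of even length $2\alpha$ and its tail a $c$-object of length $\beta$ with $2\alpha + \beta + 2 = m$ (term $2\sum b_{2\alpha} c_\beta$), or the odd place is nested inside it, so its interior is a $c$-object of odd length $2\alpha + 1$ and its tail an ordinary valid bracketing of even length $\beta$ with $2\alpha + \beta + 3 = m$ (term $2\sum c_{2\alpha+1} b_\beta$); with $c_0 = 1$ this is exactly the stated recursion. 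For case (b) ($l = 2$): as in the generic case, the order-$2$ rotation must split the tree-minus-axial-edge (which has ``size'' $n-1$) into two congruent halves, which forces $n$ to be odd; so $M'_e(n) = 0$ for $n$ even, and for $n$ odd one reads off $M'_e(n) = 2n\cdot b_{n-1}$ (with $b_{n-1} = p_{(n-1)/2}$), the factor $2$ recording the type of the axial edge and the factor $n$ recording the position of the axial half-edge among the $n-1$ contour corners of the quotient together with the one further configuration in which the axial edge is the root. Plugging these values into the displayed identity gives the claimed formula for $p^+_n$.

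The main obstacle is the odd vertex--face case: one must pin down precisely which pointed generalized trees arise as quotients when $l \mid 2n$ but $l \nmid n$ and verify that the bookkeeping of the single ``odd place'' matches the rotational data, so that the count comes out to $c_m$ rather than to something carrying an extra vertex factor; the parity obstruction and the role of the validity conditions in the edge--face case likewise need care, although they closely parallel the generic computation.
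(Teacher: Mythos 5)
Your proposal follows essentially the same route as the paper: Liskovets' quotient-map theorem, the identification $M'(n)=b_{2n}$ via Lemma \ref{lemvalbr}, the restriction of the axial pair to vertex--face and edge--face, the count $2n\,p_{(n-1)/2}$ for the edge--face case when $n$ is odd, and, in the odd vertex--face case, the same ``valid bracketing with one extra dot'' object together with the same first-symbol decomposition yielding the recursion for $c_m$. The step you flag as the main obstacle is resolved in the paper exactly by that interpretation (quasi-valid bracketings), so nothing essentially new is needed beyond what you outline.
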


We will give in the next subsection an explicit formula for the coefficient $c_m$. The first values obtained by this formula are given in Figure \ref{figtabval}, and modelisations of the polynomial vector fields of degree $2$ and $3$ are given in Figures \ref{figmodpoldeg2} and \ref{figmodpoldeg3}.

\begin{figure}[ht]
\centering
\begin{minipage}[b]{.46\linewidth}
\centering
\begin{tabular}{|c||c|}
\hline
$deg=n+1$ & $p^+_n$ \\ \hline
2 & 3 \\
3 & 6 \\
4 & 26 \\
5 & 123 \\
6 & 801 \\
7 & 5686 \\
8 & 43846 \\
9 & 353987 \\
10 & 2968801 \\
11 & 25605445 \\
\hline
\end{tabular}
\caption{The first $10$ values of $p_n^+$.}
\label{figtabval}
\end{minipage} \hfill
\begin{minipage}[b]{.46\linewidth}
\centering
\scalebox{0.8}{
\begin{pspicture}(0,-3.28)(7.6925,3.28)
\pscircle[linewidth=0.04,dimen=outer](1.63625,1.68){1.6}
\psdots[dotsize=0.12,dotstyle=square*](3.21625,1.7)
\psdots[dotsize=0.12,dotstyle=square*](0.05625,1.7)
\pscircle[linewidth=0.04,dimen=outer](6.03625,1.68){1.6}
\psdots[dotsize=0.12,dotstyle=square*](7.61625,1.7)
\psdots[dotsize=0.12,dotstyle=square*](4.45625,1.7)
\pscircle[linewidth=0.04,dimen=outer](3.83625,-1.68){1.6}
\psdots[dotsize=0.12,dotstyle=square*](5.41625,-1.66)
\psdots[dotsize=0.12,dotstyle=square*](2.25625,-1.66)
\psdots[dotsize=0.16](1.61625,1.68)
\psdots[dotsize=0.16](6.03625,0.86)
\psdots[dotsize=0.16](6.01625,2.5)
\psdots[dotsize=0.16](3.21625,-1.68)
\psdots[dotsize=0.16](4.49625,-1.68)
\psline[linewidth=0.04cm,linestyle=dashed,dash=0.16cm 0.16cm,arrowsize=0.12cm 2.0,arrowlength=1.4,arrowinset=0.5]{>-}(0.07625,1.72)(1.55625,1.7)
\psline[linewidth=0.04cm,linestyle=dashed,dash=0.16cm 0.16cm,arrowsize=0.12cm 2.0,arrowlength=1.4,arrowinset=0.5]{->}(1.67625,1.7)(3.19625,1.7)
\psline[linewidth=0.04cm,linestyle=dashed,dash=0.16cm 0.16cm,arrowsize=0.12cm 2.0,arrowlength=1.4,arrowinset=0.5]{>->}(4.45625,1.72)(7.59625,1.72)
\psline[linewidth=0.04cm,linestyle=dashed,dash=0.16cm 0.16cm,arrowsize=0.12cm 2.0,arrowlength=1.4,arrowinset=0.5]{>-}(2.23625,-1.66)(3.17625,-1.66)
\psline[linewidth=0.04cm,linestyle=dashed,dash=0.16cm 0.16cm,arrowsize=0.12cm 2.0,arrowlength=1.4,arrowinset=0.5]{->}(4.51625,-1.66)(5.37625,-1.66)
\end{pspicture} 
} 
\caption{Modelisation of the $3$ polynomial vector fields of degree $2$.}
\label{figmodpoldeg2}
\end{minipage}
\end{figure}

\begin{figure}[ht]
\centering
\scalebox{0.8} 
{
\begin{pspicture}(0,-3.63625)(11.9125,3.63625)
\pscircle[linewidth=0.04,dimen=outer](1.63625,1.98){1.6}
\psdots[dotsize=0.12,dotstyle=square*](3.21625,2.0)
\psdots[dotsize=0.12,dotstyle=square*](0.05625,2.0)
\pscircle[linewidth=0.04,dimen=outer](6.03625,1.98){1.6}
\psdots[dotsize=0.12,dotstyle=square*](7.61625,2.0)
\psdots[dotsize=0.12,dotstyle=square*](4.45625,2.0)
\pscircle[linewidth=0.04,dimen=outer](10.23625,1.98){1.6}
\psdots[dotsize=0.12,dotstyle=square*](11.81625,2.0)
\psdots[dotsize=0.12,dotstyle=square*](8.65625,2.0)
\psdots[dotsize=0.12,dotstyle=square*](1.61625,3.54)
\psdots[dotsize=0.12,dotstyle=square*](1.61625,0.38)
\psdots[dotsize=0.12,dotstyle=square*](6.01625,0.38)
\psdots[dotsize=0.12,dotstyle=square*](6.01625,3.56)
\psdots[dotsize=0.12,dotstyle=square*](10.21625,3.54)
\psdots[dotsize=0.12,dotstyle=square*](10.23625,0.38)
\pscircle[linewidth=0.04,dimen=outer](1.65625,-1.96){1.6}
\psdots[dotsize=0.12,dotstyle=square*](3.23625,-1.94)
\psdots[dotsize=0.12,dotstyle=square*](0.07625,-1.94)
\pscircle[linewidth=0.04,dimen=outer](6.05625,-1.96){1.6}
\psdots[dotsize=0.12,dotstyle=square*](7.63625,-1.94)
\psdots[dotsize=0.12,dotstyle=square*](4.47625,-1.94)
\pscircle[linewidth=0.04,dimen=outer](10.25625,-1.96){1.6}
\psdots[dotsize=0.12,dotstyle=square*](11.83625,-1.94)
\psdots[dotsize=0.12,dotstyle=square*](8.67625,-1.94)
\psdots[dotsize=0.12,dotstyle=square*](1.63625,-0.4)
\psdots[dotsize=0.12,dotstyle=square*](1.63625,-3.56)
\psdots[dotsize=0.12,dotstyle=square*](6.03625,-3.56)
\psdots[dotsize=0.12,dotstyle=square*](6.03625,-0.38)
\psdots[dotsize=0.12,dotstyle=square*](10.23625,-0.4)
\psdots[dotsize=0.12,dotstyle=square*](10.25625,-3.56)
\psdots[dotsize=0.16](1.61625,1.98)
\psdots[dotsize=0.16](6.27625,2.28)
\psdots[dotsize=0.16](5.25625,1.12)
\psdots[dotsize=0.16](9.49625,2.0)
\psdots[dotsize=0.16](10.45625,2.02)
\psdots[dotsize=0.16](2.41625,-1.22)
\psdots[dotsize=0.16](0.85625,-2.78)
\psdots[dotsize=0.16](1.67625,-2.0)
\psdots[dotsize=0.16](5.29625,-2.72)
\psdots[dotsize=0.16](6.57625,-2.16)
\psdots[dotsize=0.16](5.69625,-1.3)
\psdots[dotsize=0.16](10.23625,-1.94)
\psdots[dotsize=0.16](10.23625,-1.1)
\psdots[dotsize=0.16](10.23625,-2.86)
\psline[linewidth=0.04cm,linestyle=dashed,dash=0.16cm 0.16cm,arrowsize=0.12cm 2.0,arrowlength=1.4,arrowinset=0.5]{<-}(0.07625,2.02)(1.57625,2.02)
\psline[linewidth=0.04cm,linestyle=dashed,dash=0.16cm 0.16cm,arrowsize=0.12cm 2.0,arrowlength=1.4,arrowinset=0.5]{->}(1.65625,1.98)(3.21625,2.0)
\psline[linewidth=0.04cm,linestyle=dashed,dash=0.16cm 0.16cm,arrowsize=0.12cm 2.0,arrowlength=1.4,arrowinset=0.5]{-<}(1.61625,2.0)(1.59625,0.4)
\psline[linewidth=0.04cm,linestyle=dashed,dash=0.16cm 0.16cm,arrowsize=0.12cm 2.0,arrowlength=1.4,arrowinset=0.5]{-<}(1.61625,1.98)(1.61625,3.54)
\psline[linewidth=0.04cm,linestyle=dashed,dash=0.16cm 0.16cm,arrowsize=0.12cm 2.0,arrowlength=1.4,arrowinset=0.5]{<-}(8.63625,2.0)(9.45625,2.02)
\psline[linewidth=0.04cm,linestyle=dashed,dash=0.16cm 0.16cm,arrowsize=0.12cm 2.0,arrowlength=1.4,arrowinset=0.5]{-<}(10.23625,-1.08)(10.21625,-0.4)
\psline[linewidth=0.04cm,linestyle=dashed,dash=0.16cm 0.16cm,arrowsize=0.12cm 2.0,arrowlength=1.4,arrowinset=0.5]{>-}(10.23625,-3.54)(10.23625,-2.86)
\psline[linewidth=0.04cm,linestyle=dashed,dash=0.16cm 0.16cm,arrowsize=0.12cm 2.0,arrowlength=1.4,arrowinset=0.5]{->}(10.25625,-1.94)(11.79625,-1.92)
\psline[linewidth=0.04cm,linestyle=dashed,dash=0.16cm 0.16cm,arrowsize=0.12cm 2.0,arrowlength=1.4,arrowinset=0.5]{->}(10.21625,-1.92)(8.65625,-1.94)
\psline[linewidth=0.04cm,linestyle=dashed,dash=0.16cm 0.16cm,arrowsize=0.12cm 2.0,arrowlength=1.4,arrowinset=0.5]{->}(10.47625,2.02)(11.79625,2.02)
\psbezier[linewidth=0.04,linestyle=dashed,dash=0.16cm 0.16cm,arrowsize=0.12cm 2.0,arrowlength=1.4,arrowinset=0.5]{>->}(4.47625,2.0)(4.81625,1.96)(5.283834,1.7417572)(5.53625,1.54)(5.7886662,1.3382428)(6.03625,0.82)(6.01625,0.44)
\psbezier[linewidth=0.04,linestyle=dashed,dash=0.16cm 0.16cm,arrowsize=0.12cm 2.0,arrowlength=1.4,arrowinset=0.5]{>-}(5.97625,3.56)(5.87625,3.36)(5.8960204,3.135448)(5.91625,2.9)(5.93648,2.664552)(6.13625,2.44)(6.25625,2.3)
\psbezier[linewidth=0.04,linestyle=dashed,dash=0.16cm 0.16cm,arrowsize=0.12cm 2.0,arrowlength=1.4,arrowinset=0.5]{->}(6.27625,2.28)(6.41625,2.02)(6.69625,1.88)(6.89625,1.88)(7.09625,1.88)(7.41625,1.88)(7.59625,2.02)
\psbezier[linewidth=0.04,linestyle=dashed,dash=0.16cm 0.16cm,arrowsize=0.12cm 2.0,arrowlength=1.4,arrowinset=0.5]{-<}(10.43625,2.02)(10.21625,2.02)(10.11625,2.54)(10.09625,2.78)(10.07625,3.02)(10.09625,3.32)(10.23625,3.56)
\psbezier[linewidth=0.04,linestyle=dashed,dash=0.16cm 0.16cm,arrowsize=0.12cm 2.0,arrowlength=1.4,arrowinset=0.5]{-<}(10.45625,2.04)(10.21625,1.92)(10.116455,1.6185666)(10.09625,1.36)(10.076045,1.1014334)(10.09625,0.74)(10.212369,0.38)
\psbezier[linewidth=0.04,linestyle=dashed,dash=0.16cm 0.16cm,arrowsize=0.12cm 2.0,arrowlength=1.4,arrowinset=0.5]{<-<}(0.09625,-1.92)(0.43625,-1.96)(0.9038338,-2.1782427)(1.15625,-2.38)(1.4086661,-2.5817573)(1.65625,-3.1)(1.63625,-3.48)
\psbezier[linewidth=0.04,linestyle=dashed,dash=0.16cm 0.16cm,arrowsize=0.12cm 2.0,arrowlength=1.4,arrowinset=0.5]{<-<}(3.21625,-1.92)(2.91625,-1.88)(2.3686662,-1.7017572)(2.11625,-1.5)(1.8638338,-1.2982428)(1.61625,-0.78)(1.63625,-0.4)
\psbezier[linewidth=0.04,linestyle=dashed,dash=0.16cm 0.16cm,arrowsize=0.12cm 2.0,arrowlength=1.4,arrowinset=0.5]{<-<}(4.49625,-1.92)(4.83625,-1.96)(5.303834,-2.1782427)(5.55625,-2.38)(5.808666,-2.5817573)(6.05625,-3.1)(6.03625,-3.48)
\psbezier[linewidth=0.04,linestyle=dashed,dash=0.16cm 0.16cm,arrowsize=0.12cm 2.0,arrowlength=1.4,arrowinset=0.5]{-<}(5.71625,-1.3)(5.93625,-1.3)(6.08198,-1.1673199)(6.15625,-1.0)(6.23052,-0.8326801)(6.17625,-0.6)(6.03625,-0.36)
\psbezier[linewidth=0.04,linestyle=dashed,dash=0.16cm 0.16cm,arrowsize=0.12cm 2.0,arrowlength=1.4,arrowinset=0.5]{->}(6.57625,-2.16)(6.59625,-2.0)(6.7981963,-1.8065884)(6.99625,-1.76)(7.194304,-1.7134116)(7.45625,-1.82)(7.61625,-1.94)
\end{pspicture} 
}
\caption{Modelisation of the $6$ polynomial vector fields of degree $3$.}
\label{figmodpoldeg3}
\end{figure}

\begin{proof}
By adapting Theorem \ref{thenmapnr} in our situation, we can write:

\begin{displaymath}
p^+(n) = \frac{1}{2n} \left[ p'(n) + p'_{e}(n) + \sum_{l \geq 2 \atop l | 2n}{\varphi(l).p'_{l}(n)} \right], \quad n \geq 2.
\end{displaymath}

Moreover, by Lemma \ref{lemvalbr}, we still have $p'(n) = p_{n}$, so we just need to determine the values of $p'_{e}(n)$ and $p'_{l}(n)$.

For that, we need to study the quotient maps and their respective pair of axial cells. As in the example of trees, and for the same argument, there are only two different pairs of axial cells: the pair vertex-face and the pair edge-face. Now we study each case separately.

\begin{enumerate}
\item[1.] the two axial cells are an edge and a face. \newline
In this case, the symmetry we consider is necessarily of order $2$, so we need to consider $2$-rooted generalized trees. Moreover, to be in this situation it is necessary that the number of edges is odd. In fact, by symmetry, the number of vertices is even, so as the number of isolated half-edge in each vertex is even, we deduce that the parity of the number of edges is equal to the parity of $n$. So that the symmetry axis is the axis edge-face, it requires that the number of edges is odd. In conclusion,

\begin{displaymath}
p'_{e}(n) = 0 \quad \text{if } n \text{ is even}.
\end{displaymath}

Suppose now that $n$ is odd, we need to distinguish two possible situations:

\begin{itemize}
\item The axial edge does not contain the roots of the map. In this case, the quotient map is a rooted generalized tree with an added, distinguished, isolated half-edge. Moreover, the axial edge may be of two types (continuous or dotted), so the number of map in this situation is equal to
\begin{displaymath}
2 \times 2\left( \frac{n-1}{2} \right) \cdot p_{(n-1)/2}.
\end{displaymath}

\item The axial edge contains the two roots of the map. In this case, the quotient map is an unrooted generalized tree to which we must be added to a vertex a distinguished isolated half-edge. Moreover, this distinguished half-edge becomes the root of the quotient map. So, by transcribing the information into a parenthesis representation, we obtain a valid bracketing in a string of $n$ elements containing the integers from $0$ to $n-1$. Moreover the element $0$ is not paired with another number, because the root of the map is an isolated half-edge, so we can erase the element $0$ to obtain a valid bracketing in the string $\{1, \ldots , n-1\}$. \newline
In conclusion, the set of quotient maps obtained in this case from rooted generalized tree having $n$ half-edges is in bijection with the set of rooted generalized tree having $(n-1)/2$ half-edges. As the axial edge may be of two types (continuous or dotted), we deduce that the number of map in this situation is equal to

\begin{displaymath}
2 \cdot p_{(n-1)/2}.
\end{displaymath}
\end{itemize}

Finally if $n$ is odd,

\begin{displaymath}
p'_{e}(n) = 2(n-1) \cdot p_{(n-1)/2} + 2 \cdot p_{(n-1)/2} = 2n \cdot p_{(n-1)/2}. 
\end{displaymath}

\item[2.] the two axial cells are a vertex and a face. \newline
In this case, the order of the symmetry is equal to $l \geq 2$, and the maps considered are $l$-rooted generalized tree. As in the previous case, it will be necessary to distinguish two situations.

\begin{itemize}
\item If $2n/l$ is even, then the quotient map is a rooted generalized tree with $2n/l$ half-edge and a distinguished vertex. By Lemma \ref{lemvalbr}, we deduce that
\begin{displaymath}
p'_{l}(n) = \sum_{k \geq 0}{p_{n/l,k} . (k+1)}.
\end{displaymath}

\item If $2n/l$ is odd, then the situation becomes more difficult in the sense that the quotient map is not necessarily a generalized tree. To achieve an enumeration in this situation, we will return to a bracketing problem by interpreting this one as follows: a bracketing representation of this problem comes from a valid bracketing with an added dot anywhere in the string. \newline
For example, from the valid bracketing $[ \, [ \, ] \, ]$ we can construct $[ \, [ \, \bullet \, ] \, ]$ as a bracketing representation of our problem. These bracketing representations are called \textbf{quasi-valid bracketing}, and the number of quasi-valid bracketing in a string of $m$ elements is denoted by $c_m$. 

Now, consider a quasi-valid bracketing in a string of $m$ elements, three situations can occur:
\begin{enumerate}
\item[-] The first element is a dot. In this case, the bracketing representation obtained by deleting the first element of the initial string is a quasi-valid bracketing.
\item[-] The first element is a left (square or round) parenthesis and its associated right parenthesis is the $j$-th element of the string, with $j$ even. In this case, the string containing the elements from $2$ to $j-1$ is a valid bracketing, and the string containing the integers from $j+1$ to $m$ is a quasi-valid bracketing.
\item[-] The first element is a left (square or round) parenthesis and its associated right parenthesis is the $k$-th element of the string, with $k$ odd. In this case, the string containing the integers from $2$ to $k-1$ is a quasi-valid bracketing, and the bracketing representation in the string containing the integers from $k+1$ to $m$ is a valid bracketing.
\end{enumerate}

In conclusion, $c_m$ satisfies the following recurrence relation
\begin{equation}
\label{eqrecurrence}
c_m = c_{m-1} + 2\sum_{2\alpha + \beta + 2 = m}{b_{2\alpha} c_{\beta}} + 2\sum_{2\alpha + \beta + 3 = m}{c_{2\alpha +1} b_{\beta}}.
\end{equation}
\end{itemize}
This completes the proof of the theorem.
\end{enumerate}

\end{proof}

\begin{cortreeg}
\begin{displaymath}
\lim_{n \rightarrow +\infty}{(p^{+}_{n})^{1/n}} = \frac{2}{5\sqrt{5} - 11} \approx 11,09.
\end{displaymath}
\end{cortreeg}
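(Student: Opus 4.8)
The plan is to show that $p_n^{+}$ is, up to a subexponential factor, equal to $\tfrac1{2n}p_n$ (with $p_n=b_{2n}$), and then to read off the exponential growth rate of $p_n$ from the functional equation satisfied by its generating function. \textbf{Step 1 (reduction to $p_n$).} Write the formula of Theorem~\ref{thenpoly} as $p_n^{+}=\tfrac1{2n}\bigl(p_n+R_n\bigr)$, where $R_n\ge 0$ gathers the divisor sum $\sum_{l\ge 2,\,l\mid 2n}\varphi(l)(\cdots)$ together with the term $2n\,b_{n-1}$ occurring when $n$ is odd (all summands are nonnegative, being counts of maps). Every index of a $b$- or $p$-coefficient appearing in $R_n$ is at most $n/2$: indeed $b_{n-1}=p_{(n-1)/2}$, for $l\ge 2$ the relevant index is $n/l\le n/2$, and $c_m\le m\,b_{m-1}$ by the very description of quasi-valid bracketings (a valid bracketing of length $m-1$ with one inserted dot, so at most $m\,b_{m-1}$ of them). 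Since $\varphi(l)\le l\le 2n$ and $2n$ has $n^{o(1)}$ divisors, once we know $p_m=O(\rho^{-m}m^{-3/2})$ (Step~2) it follows that $R_n=O\bigl(\rho^{-n/2}n^{A}\bigr)$ for some constant $A$; as $1/\rho>1$ this is $o(p_n)$. Hence $\tfrac1{2n}p_n\le p_n^{+}\le\tfrac1n p_n$ for $n$ large, and taking $n$-th roots gives $\lim_n(p_n^{+})^{1/n}=\lim_n(p_n)^{1/n}$.

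\textbf{Step 2 (growth rate of $p_n$ and value of $\rho$).} From the proof of Lemma~\ref{lemvalbr}, $q(z)=\sum q_nz^n$ is the unique power-series solution of $q=z\phi(q)$ with $\phi(u)=(1+u)(1+2u)^2$, and $p(z)=\sum p_nz^n=\dfrac{q}{z(1+2q)}$. Because $\phi$ has nonnegative coefficients, $\phi(0)=1\ne 0$, and $\phi$ is aperiodic (its coefficients of $u^0$ and $u^1$ are both nonzero), the classical analysis of such equations shows that $q$ has radius of convergence $\rho=\tau/\phi(\tau)$, where $\tau>0$ is the unique positive root of $\phi(\tau)=\tau\phi'(\tau)$, that $\rho$ is the only singularity of $q$ on its circle of convergence and is of square-root type, and hence $q_n\sim c\,\rho^{-n}n^{-3/2}$. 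Since $q\ge 0$ on $[0,\rho]$, the factor $1+2q$ does not vanish there and $z=0$ is a removable singularity of $p$, so $p$ has the same radius of convergence and $p_n\sim c'\rho^{-n}n^{-3/2}$, giving $\lim_n(p_n)^{1/n}=1/\rho$. To evaluate $\rho$: $\phi'(u)=(1+2u)(5+6u)$, so after cancelling $1+2u$ the equation $\phi(\tau)=\tau\phi'(\tau)$ becomes $(1+\tau)(1+2\tau)=\tau(5+6\tau)$, i.e. $4\tau^2+2\tau-1=0$, with positive root $\tau=\tfrac{\sqrt5-1}{4}$. Then $1+2\tau=\tfrac{\sqrt5+1}{2}$ and $1+\tau=\tfrac{\sqrt5+3}{4}$, whence $\phi(\tau)=\tfrac{(3+\sqrt5)^2}{8}=\tfrac{7+3\sqrt5}{4}$ and
\begin{displaymath}
\rho=\frac{\tau}{\phi(\tau)}=\frac{\sqrt5-1}{7+3\sqrt5}=\frac{5\sqrt5-11}{2}.
\end{displaymath}
Combining with Step~1, $\displaystyle\lim_{n\to\infty}(p_n^{+})^{1/n}=\frac1\rho=\frac{2}{5\sqrt5-11}\approx 11{,}09$.

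\textbf{Main obstacle.} The only genuinely non-routine point is the singularity statement in Step~2: one must argue that the functional equation $q=z\phi(q)$ forces a single dominant singularity of square-root type at $z=\rho$, so that $\lim_n(p_n)^{1/n}$ really exists (not merely as a $\limsup$) and equals $1/\rho$; aperiodicity of $\phi$ is precisely what excludes further singularities on the circle of convergence. Once this is granted, the divisor-sum bookkeeping of Step~1 and the elementary algebra solving $4\tau^2+2\tau-1=0$ in Step~2 are routine.
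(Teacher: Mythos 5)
Your proposal is correct, and its conclusion matches the paper's, but it takes a genuinely different and much more self-contained route. The paper's own proof is two lines: from Theorem \ref{thenpoly} (equivalently, from the fact that the unrooted generalized trees counted by $p_n^+$ are orbits of the $p_n$ rooted ones under a cyclic group of order $2n$) it reads off the sandwich $\frac{1}{2n}p_n \leq p_n^+ \leq p_n$, and then it simply cites K.Dias \cite{Dias2} for the existence and value of $\lim_n (p_n)^{1/n} = \frac{2}{5\sqrt{5}-11}$. You do both halves yourself. In Step 1 you bound the correction terms $R_n$ analytically (using $c_m \leq m\,b_{m-1}$, the observation that all indices occurring in $R_n$ are at most $n/2$, and the subexponential divisor count); this is valid but heavier than needed, since the upper bound $p_n^+ \leq p_n$ is immediate from the orbit interpretation without any estimate on $R_n$, and the crude sandwich already suffices once the limit for $p_n$ is known. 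In Step 2 you re-derive the Dias limit from scratch: the functional equation $q = z(1+q)(1+2q)^2$ from Lemma \ref{lemvalbr}, the smooth implicit-function (square-root singularity) schema with aperiodicity to get $q_n \sim c\,\rho^{-n}n^{-3/2}$, transfer to $p = q/(z(1+2q))$ since $1+2q$ does not vanish up to $\rho$, and the computation $\tau = \frac{\sqrt{5}-1}{4}$, $\rho = \frac{\tau}{\phi(\tau)} = \frac{5\sqrt{5}-11}{2}$, which checks out. The trade-off: your argument does not rely on \cite{Dias2} for the asymptotics of $p_n$ (only on standard analytic-combinatorics theorems, which you correctly flag as the one non-routine ingredient; alternatively, existence of the limit also follows from supermultiplicativity of $p_n$ under concatenation of valid bracketings), while the paper's proof is shorter precisely because it outsources that step.
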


\begin{proof}
Thanks to Theorem \ref{thenpoly}, it's easy to see that
\begin{displaymath}
\frac{1}{2n} p_n \leq p^+_n \leq p_n.
\end{displaymath}
So, by using the result proved by K.Dias in \cite{Dias2}, we deduce that:
\begin{displaymath}
\lim_{n \rightarrow +\infty}{(p^+_n)^{1/n}} = \lim_{n \rightarrow +\infty}{(p_n)^{1/n}} = \frac{2}{5\sqrt{5} - 11}.
\end{displaymath}
\end{proof}

\subsection{A closed form of $c_m$.}
\label{detqm}

In order to complete the proof of Theorem \ref{mainth}, it only remains to give a closed formula for the coefficients $c_m$.  For that, we try to use the same method as before (cf Lemma \ref{lemvalbr}) by manipulating somewhat generating functions. To start, set
\begin{eqnarray*}
r_m & := & c_{2m} \\
s_m & := & c_{2m+1}
\end{eqnarray*}

Then, using the recurrence relation (\ref{eqrecurrence}) satisfied by $c_m$, we deduce

\begin{eqnarray*}
r_m \quad := \quad c_{2m} & = & c_{2m-1} + 2 \sum_{j=0}^{m-1} b_{2j} c_{2m-2j-2} + 2 \sum_{j=0}^{m-2} b_{2m-2j-3} c_{2j+1}, \\
& = & s_{m-1} + 2 \sum_{j=0}^{m-1} p_j r_{m-j-1} + 2 \sum_{j=0}^{m-2} q_{m-j-1} s_j
\end{eqnarray*}

and

\begin{eqnarray*}
s_m \quad := \quad c_{2m+1} & = & c_{2m} + 2 \sum_{j=0}^{m-1} b_{2j} c_{2m-2j-1} + 2 \sum_{j=0}^{m-1} b_{2m-2j-2} c_{2j+1}, \\
& = & c_{2m} + 4 \sum_{j=0}^{m-1} b_{2m-2j-2} c_{2j+1}, \\
& = & r_m + 4 \sum_{j=0}^{m-1} p_{m-j-1} s_j.
\end{eqnarray*}

So, let us consider the generating functions

\begin{displaymath}
p(z) = \sum_{n \geq 0} p_n z^n, \quad q(z) = \sum_{n \geq 0} q_n z^n, \quad \text{as before}
\end{displaymath}
and \begin{displaymath}
r(z) = \sum_{n \geq 0} r_n z^n, \quad s(z) = \sum_{n \geq 0} s_n z^n.
\end{displaymath}

These four functions satisfy the following equations:

\begin{equation}
\label{eqdef}
p(z) = 1 + q(z) + 2zp(z)^2,
\end{equation}
\begin{equation}
\label{eqdeg}
q(z) = zp(z) + 2zp(z)q(z),
\end{equation}
\begin{equation}
\label{eqdeq}
r(z) = 1 + zs(z) + 2zp(z)r(z) + 2zq(z)s(z),
\end{equation}
\begin{equation}
\label{eqdep}
s(z) = r(z) + 4zp(z)s(z).
\end{equation}

Notice already that the last equation can be used to determine the coefficients $r_m$ of the function $r$ in terms of those of $s$ and $p$. More precisely,
\begin{equation}
\label{eqformr}
r_m = s_m - 4 \sum_{j=0}^{m-1}{p_{m-j-1} s_j}.
\end{equation}
Thus, we only need to determine the coefficients $s_m$ of the function $s$ to complete our enumeration. For that, we note that by multiplying the equation (\ref{eqdef}) by $r$ , the equation (\ref{eqdeq}) by $p$ and by identifying the results, we obtain:

\begin{displaymath}
p + zsp + 2zqsp + 2zrp^2 = r + qr + 2zrp^2.
\end{displaymath}

So, after a simplification, and noticing that $sq = zsp + 2zsqp$, we deduce that

\begin{displaymath}
p + sq = r + rq.
\end{displaymath}

Then, as $sq = rq + 4zpsq$, we obtain

\begin{displaymath}
p + 4zpsq = r.
\end{displaymath}

Thanks to the equation (\ref{eqdeg}), and the equation (\ref{eqdep}), we can conclude that
\begin{displaymath}
s(1-2zp - 2q) = p,
\end{displaymath}

that we can still write

\begin{equation}
\label{eqcalp}
s(4\tilde{p}^2 - 6\tilde{p} + 1) = p(1-2zp), \quad \text{with } \tilde{p}=zp.
\end{equation}

So, for now, we need to determine the function $(4\tilde{p}^2 - 6\tilde{p} +1)^{-1}$ in order to give explicit formula for the coefficients of $s$. Note that using the equations (\ref{eqdef}) and (\ref{eqdeg}), one easily checks that

\begin{displaymath}
4z^2p^3 - 4zp^2 + (z+1)p -1 = 0,
\end{displaymath}

so

\begin{displaymath}
4\tilde{p}^3 = 4\tilde{p}^2 - (z+1)\tilde{p} + z.
\end{displaymath}

We can hope from this relation to determinate some (rational) coefficients $u,v,w$ such that
\begin{displaymath}
(u\tilde{p}^2 + v\tilde{p} + w)(4\tilde{p}^2 - 6\tilde{p} +1) = 1.
\end{displaymath}
After some calculations, we obtain a positive result given by the following relations:

\begin{eqnarray*}
u & = & \frac{-4(z+3)}{z^2 + 11z -1}, \\
v & = & \frac{10}{z^2 + 11z -1}, \\
w & = & \frac{-(z^2 + 5z +1)}{z^2 + 11z -1}.
\end{eqnarray*}

From this result, and the equation (\ref{eqcalp}), we obtain

\begin{displaymath}
(z^2 +11z - 1)s = p(1-2zp) \left[ -4(z+3)z^2 p^2 + 10zp - (z^2 + 5z +1) \right]
\end{displaymath}

and so after simplification
\begin{equation}
\label{eqinvp}
(z^2 +11z - 1)s = 2z(3z-1)p^2 + (2z+1)p + (z-2).
\end{equation}

However, the function $2z(3z-1)p(z)^2 + (2z+1)p(z) + (z-2)$ can be written as a formal series with coefficients in $\mathbb{R}$, so there exists coefficients $a_n \in \mathbb{R}$ such that
\begin{displaymath}
2z(3z-1)p(z)^2 + (2z+1)p(z) + (z-2) = \sum_{n \geq 0} a_n z^n.
\end{displaymath}

Then we have $a_0 = -1$, $a_1 = 4$ and for $n \geq 2$
\begin{eqnarray*}
a_n & = & 6 \sum_{k=0}^{n-2}p_k p_{n-k-2} - 2 \sum_{k=0}^{n-1} p_k p_{n-k-1} + 2p_{n-1} + p_n \\
& = & 6 \sum_{k=0}^{n-2}p_k p_{n-k-2} - 2 \sum_{k=0}^{n-2} p_k p_{n-k-1} + p_n.
\end{eqnarray*}

Finally,
\begin{displaymath}
(z^2 +11z-1)^{-1} = \sum_{n \geq 0} \left( \sum_{k=0}^{n} - \alpha^k \beta^{n-k} \right) z^n,
\end{displaymath}
where $\alpha$ and $\beta$ are the roots of the polynomial $X^2 + 11X -1$. 

We conclude from the equation (\ref{eqinvp}) and the last results that
\begin{displaymath}
c_{2m+1} := s_m = -\sum_{k=0}^{m} a_k \left( \sum_{i=0}^{m-k} \alpha^{i} \beta^{m-k-i} \right),
\end{displaymath}
and from the equation (\ref{eqformr}) that
\begin{eqnarray*}
c_{2m} := r_m & = & s_m - 4 \sum_{j=0}^{m-1}{p_{m-j-1}s_j} \\
& = & c_{2m+1} - 4 \sum_{j=0}^{m-1} c_{2j+1} p_{m-j-1}.
\end{eqnarray*}

This completes the proof of the main result of this article.

\bibliographystyle{alpha}
\bibliography{bibenumpoly} 

\end{document}